\documentclass[a4paper]{amsart}

\pdfoutput=1

\usepackage{amsmath,amsthm,amssymb}
\usepackage{mathtools}
\usepackage{mathrsfs}
\usepackage{commath}

\usepackage{paralist}

\usepackage{microtype}
\usepackage{lmodern}
\usepackage[T1]{fontenc}
\usepackage[all]{xy}
\usepackage{dsfont}
\usepackage{chngcntr}
\usepackage[english]{babel}

\usepackage[autostyle]{csquotes}
\usepackage{multirow}
\usepackage{enumerate}
\usepackage{enumitem}

\usepackage{booktabs}
\usepackage{lunadiagrams}

\usepackage{varwidth}

\usepackage{tikz,pgffor}

\newtheorem{theorem}{Theorem}[section]
\newtheorem{lemma}[theorem]{Lemma}
\newtheorem{conj}[theorem]{Conjecture}
\newtheorem{prop}[theorem]{Proposition}

\newtheorem{cor}[theorem]{Corollary}
\theoremstyle{definition}
\newtheorem{definition}[theorem]{Definition}
\newtheorem{remark}[theorem]{Remark}
\newtheorem{example}[theorem]{Example}

\numberwithin{equation}{section}

\DeclareMathOperator*{\Supp}{supp}
\DeclareMathOperator*{\el}{e}
\DeclareMathOperator*{\vel}{v}
\DeclareMathOperator*{\reg}{reg}

\DeclareMathOperator*{\topint}{int}

\DeclareMathOperator*{\Spec}{Spec}
\DeclareMathOperator*{\Cl}{Cl}

\DeclareMathOperator{\rank}{rank}

\DeclareMathOperator{\lspan}{span}

\newcommand{\Sp}{\mathrm{Sp}}
\newcommand{\SO}{\mathrm{SO}}
\newcommand{\GL}{\mathrm{GL}}
\newcommand{\SL}{\mathrm{SL}}
\newcommand{\PSL}{\mathrm{PSL}}
\newcommand{\Hom}{\mathrm{Hom}}
\newcommand{\conv}{\mathrm{conv}}
\newcommand{\cone}{\mathrm{cone}}
\newcommand{\szs}{\Lambda}

\newcommand{\rleft}{\mathopen{}\mathclose\bgroup\left}
\newcommand{\rright}{\aftergroup\egroup\right}

\newcommand{\is}[1]{\wp(#1)}
\newcommand{\tc}[1]{\Rs_{#1}}
\newcommand{\adiv}{\Delta}
\newcommand{\bdiv}{\Dm}
\newcommand{\gdiv}{\Gamma}
\newcommand{\sv}{\vartheta}

\newcommand{\cur}{\Cs}

\newcommand{\Cd}{\mathds{C}}
\newcommand{\Qd}{\mathds{Q}}

\newcommand{\Zd}{\mathds{Z}}
\newcommand{\Rd}{\mathds{R}}

\newcommand{\Pd}{\mathds{P}}

\newcommand{\Td}{\mathds{T}}
\newcommand{\Ad}{\mathds{A}}

\newcommand{\C}{\Cd}
\newcommand{\R}{\Rd}
\newcommand{\Q}{\Qd}

\newcommand{\Z}{\Zd}

\newcommand{\Vm}{\mathcal{V}}
\newcommand{\Cm}{\mathcal{C}}
\newcommand{\Em}{\mathcal{E}}
\newcommand{\Fm}{\mathcal{F}}
\newcommand{\Dm}{\mathcal{D}}
\newcommand{\Rm}{\mathcal{R}}

\newcommand{\Om}{\mathcal{O}}
\newcommand{\Pm}{\mathcal{P}}
\newcommand{\Nm}{\mathcal{N}}
\newcommand{\Mm}{\mathcal{M}}

\newcommand{\Qm}{\mathcal{Q}}

\newcommand{\Xf}{\mathfrak{X}}

\newcommand{\Ss}{\mathscr{S}}
\newcommand{\Rs}{\mathscr{R}}
\newcommand{\Cs}{\mathscr{C}}

\newcommand{\ie}{i.\,e.~}
\newcommand{\eg}{e.\,g.~}

\setcounter{tocdepth}{1}

\newsavebox{\tinytwo}
\savebox{\tinytwo}{\put(-150,420){\tiny2}}
\newsavebox{\tinyone}
\savebox{\tinyone}{\put(-150,420){\tiny1}}
\newsavebox{\tinymone}
\savebox{\tinymone}{\put(-300,420){\tiny-1}}
\newsavebox{\tinyzero}
\savebox{\tinyzero}{\put(-150,420){\tiny0}}

\newsavebox{\dcircle}

\savebox{\dcircle}{
\put(300.000, 0.000){\circle*{0}}
\put(242.705, 176.336){\circle*{0}}
\put(92.705, 285.317){\circle*{0}}
\put(-92.705, 285.317){\circle*{0}}
\put(-242.705, 176.336){\circle*{0}}
\put(-300.000, 0.000){\circle*{0}}
\put(-242.705, -176.336){\circle*{0}}
\put(-92.705, -285.317){\circle*{0}}
\put(92.705, -285.317){\circle*{0}}
\put(242.705, -176.336){\circle*{0}}
}

\newsavebox{\gsecondtwox}
\savebox{\gsecondtwox}
{\put(0,0){\usebox{\lefttriedge}}\put(1800,0){\usebox{\gcircle}}}

\begin{document}

\title[The generalized Mukai conjecture for symmetric varieties]
{The generalized Mukai conjecture\\for symmetric varieties}

\author{Giuliano Gagliardi}
\address{Fachbereich Mathematik, Universit\"at T\"ubingen, Auf der
Morgenstelle 10, 72076 T\"ubingen, Germany}
\curraddr{Institut f\"ur Algebra, Zahlentheorie und Diskrete Mathematik, Leibniz Universit\"at Hannover,
Welfengarten 1, 30167 Hannover, Germany}
\email{gagliardi@math.uni-hannover.de}
\thanks{}

\author{Johannes Hofscheier}
\address{Fachbereich Mathematik, Universit\"at T\"ubingen, Auf der
Morgenstelle 10, 72076 T\"ubingen, Germany}
\curraddr{Institut f\"ur Algebra und Geometrie,
  Otto-von-Guericke-Universit\"at Magdeburg, Universit\"atsplatz 2,
  39106 Magdeburg, Germany}
\email{johannes.hofscheier@ovgu.de}
\thanks{}

\subjclass[2010]{Primary 14M27; Secondary 14J45, 14L30, 52B20}

\begin{abstract}
  We associate to any complete spherical variety $X$ a certain
  nonnegative rational number $\is{X}$, which we conjecture to satisfy
  the inequality $\is{X} \le \dim X - \rank X$ with equality holding
  if and only if $X$ is isomorphic to a toric variety.  We show that,
  for spherical varieties, our conjecture implies the generalized
  Mukai conjecture on the pseudo-index of smooth Fano varieties due to
  Bonavero, Casagrande, Debarre, and Druel.  We also deduce from our
  conjecture a smoothness criterion for spherical varieties.  It
  follows from the work of Pasquier that our conjecture holds for
  horospherical varieties.  We are able to prove our conjecture for
  symmetric varieties.
\end{abstract}

\maketitle

\microtypesetup{protrusion=false}
\tableofcontents
\microtypesetup{protrusion=true}

\section{Introduction}
\label{sec:introduction}

Let $X$ be a complex Gorenstein Fano variety, \ie a projective complex
algebraic variety with Cartier and ample anticanonical divisor
class. Recall the following generalization of a conjecture by Mukai
due to Bonavero, Casagrande, Debarre, and Druel. This conjecture
involves the \emph{pseudo-index} of $X$, \ie
\begin{align*}
  \iota_X \coloneqq \min{} \rleft\{ (-K_X \cdot C) : \text{$C$ is a
    rational curve in $X$}\rright\}\text{,}
\end{align*}
and the \emph{Picard number} of $X$, which we denote by $\rho_X$.

\begin{conj}[{\cite{bcdd}}]
  \label{conj:bcdd}
  Let $X$ be a smooth Fano variety. Then we have
  \begin{align*}
    \rho_X(\iota_X-1) \le \dim X\text{,}
  \end{align*}
  where equality holds if and only if $X \cong
  (\Pd^{\iota_X-1})^{\rho_X}$.
\end{conj}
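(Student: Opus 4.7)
The plan is to prove Conjecture~\ref{conj:bcdd} for the subclass of Fano varieties accessible by the methods announced in the abstract, namely smooth symmetric (in particular spherical) Fano varieties. The strategy is to reduce the Mukai inequality $\rho_X(\iota_X-1)\le\dim X$ to a cleaner combinatorial inequality for the invariant $\is{X}$ attached to the spherical variety $X$, and then to verify the latter by a case analysis.

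First, I would exploit the combinatorial theory of spherical varieties. The pseudo-index $\iota_X$ can be estimated from above by the minimum of $-K_X\cdot C$ over the distinguished $B$-stable rational curves arising from colors and from edges of the moment polytope, while $\rho_X$ equals the number of $G$-invariant prime divisors plus colors minus $\rank X$. Combining these two pieces of data, one expects an estimate of the form $\rho_X(\iota_X-1)\le\is{X}+\rank X$, which reduces Conjecture~\ref{conj:bcdd} to the cleaner assertion $\is{X}\le\dim X-\rank X$ forecast in the abstract.

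Second, I would establish this inequality for symmetric varieties. The bookkeeping is governed by the restricted root system of the underlying symmetric pair $(G,H)$, and $\is{X}$ should admit an explicit expression in terms of spherical roots, colors, and the Cartan pairing of the associated spherical system. A case analysis over the classification of irreducible symmetric pairs, combined with an additivity argument under products, should then yield the bound. For the equality case, $\rho_X(\iota_X-1)=\dim X$ forces equality in $\is{X}\le\dim X-\rank X$, which by the equality clause of the authors' conjecture implies that $X$ is toric; the conclusion $X\cong(\Pd^{\iota_X-1})^{\rho_X}$ then follows from the classical toric case of Conjecture~\ref{conj:bcdd}, settled by Casagrande.

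The main obstacle is the uniform bound $\is{X}\le\dim X-\rank X$ across all symmetric types. In contrast to the horospherical setting (handled by Pasquier's work), the restricted root system of a symmetric variety can be non-reduced and of a different type than the ambient root system, so the combinatorics of spherical roots and their Cartan pairings must be controlled delicately, and there is no obvious reduction to a single well-studied combinatorial inequality. My expectation is that the proof will hinge on isolating a small list of "extremal" spherical systems where equality is approached, and on showing that the invariant $\is{X}$ strictly decreases under the natural operations that simplify these systems.
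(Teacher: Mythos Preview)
Your proposal correctly identifies that the paper does not prove Conjecture~\ref{conj:bcdd} in general but rather for smooth symmetric Fano varieties, and your outlined strategy matches the paper's approach essentially step for step: the reduction $\rho_X(\iota_X-1)\le \is{X}+\rank X$ is exactly Proposition~\ref{prop:mineq} combined with the existence of a supported vertex in $\cone(\Sigma)$ (Theorem~\ref{th:bcddsph}), the inequality $\is{X}\le\dim X-\rank X$ for symmetric varieties is Theorem~\ref{th:sym} proved by a case analysis over the classification in Appendix~\ref{sec:luna-diagr-symm}, and the equality clause is handled via Casagrande's toric result precisely as you describe. Your anticipation that the argument hinges on isolating the extremal cases where equality is approached is also borne out (Table~\ref{tab:upper-bound-achieved}).
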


In this paper, we investigate this conjecture in
the case of a \emph{spherical variety}
$X$, \ie a normal irreducible $G$-variety $X$ containing an open orbit
for a Borel subgroup $B$ of a connected reductive algebraic group
$G$. Spherical varieties can be considered as a generalization of
toric and horospherical varieties, for which
Conjecture~\ref{conj:bcdd} has already been proven by Casagrande
(see~\cite{cas06}) and Pasquier (see~\cite{pas10}) respectively.

To the spherical variety $X$ (not necessarily Gorenstein Fano) one may
assign the following combinatorial invariants: We denote by $\Mm$ the
weight lattice of $B$-semi-invariants in the function field $\C(X)$
and by $\adiv$ the set of $B$-invariant prime divisors in $X$.  To any
$D \in \adiv$ we associate the element $\rho'(D) \in \Nm \coloneqq
\Hom(\Mm, \Z)$ defined by $\langle \rho'(D), \chi\rangle \coloneqq
\nu_D(f_\chi)$ where $\langle \cdot, \cdot \rangle\colon \Nm \times \Mm \to
\Z$ denotes the natural pairing and $f_\chi \in \C(X)$ denotes a
$B$-semi-invariant rational function of weight $\chi \in \Mm$.  We
denote by $\rank X$ the rank of the lattice $\Mm$.

Brion has shown (see~\cite[Proposition~4.1]{Brion:cc}) that there is a natural choice of positive integers
$m_D$ such that
\begin{align*}
  -K_X \coloneqq \sum_{D\in\adiv} m_DD
\end{align*}
is an anticanonical divisor of $X$. Consider the intersection $Q^*$ of
half-spaces in the vector space $\Mm_\Q \coloneqq \Mm \otimes_\Z \Q$
defined as
\begin{align*}
  Q^* \coloneqq \bigcap_{D\in\adiv} \{v \in \Mm_\Q : \langle \rho'(D),
  v\rangle \ge -m_D \}\text{.}
\end{align*}
If $X$ is Gorenstein Fano, then $Q^*$ is a polytope (also known as
the moment polytope of $-K_X$) and certain of its vertices are called
\emph{supported} (they are in bijective correspondence with the closed
$G$-orbits in $X$).  The set of supported vertices of $Q^*$ is denoted
by $V_{\Supp}(Q^*)$. In fact, a vertex $v \in Q^*$ is supported if and
only if $v$ is the only point in the intersection of $Q^*$ with the
affine cone $v + \cone(\Sigma)$ where $\Sigma \subseteq \Mm_\Q$ is a
certain finite set associated to $X$ called the set of
\emph{spherically closed spherical roots}.  For details, we refer the
reader to Section~\ref{sec:spher-syst-luna}.

In the case of a horospherical variety $X$ (\ie $\Sigma = \emptyset$),
the inequality of Conjecture~\ref{conj:bcdd} follows from the
inequality $\sum_{D \in\adiv} (m_D - 1) \le \dim X - \rank X$
(see~\cite[Proof of the inequality of Theorem~1]{pas10}).  By refining
this approach of Pasquier, we obtain the following generalization.

\begin{prop}
  \label{prop:mineq}
  Let $X$ be a $\Q$-factorial Gorenstein spherical Fano variety.
  Assume that there exists $\sv \in \conv(V_{\Supp}(Q^*))$ such that
  \begin{align*}
    \sum_{D \in \adiv} \rleft(m_D - 1 + \langle \rho'(D), \sv
    \rangle\rright) \le \dim X - \rank X \text{.}
  \end{align*}
  Then we have $\rho_X(\iota_X-1) \le \dim X$.
\end{prop}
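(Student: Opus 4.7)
My plan is to refine Pasquier's approach to the horospherical case (\cite{pas10}) by inserting a convexity device. The first ingredient is the combinatorial formula for the Picard number: for a complete $\Q$-factorial spherical variety $X$, one has $\rho_X = |\adiv| - \rank X$. Indeed, $\Cl(X)$ is the cokernel of the map $\Mm \to \Z^\adiv$, $\chi \mapsto (\langle \rho'(D), \chi\rangle)_D$, which is injective because completeness of $X$ forces the elements $\rho'(D)$ with $D \in \adiv$ to span $\Nm_\Q$, and $\Q$-factoriality then yields $\mathrm{Pic}(X)_\Q = \Cl(X)_\Q$. Substituting this into the goal inequality $\rho_X(\iota_X-1) \le \dim X$, we see that it would follow from the stronger estimate
\[
|\adiv|(\iota_X - 1) \le \dim X - \rank X,
\]
via the chain
\[
\rho_X(\iota_X - 1) = |\adiv|(\iota_X - 1) - \rank X \cdot (\iota_X - 1) \le \dim X - \rank X \cdot \iota_X \le \dim X.
\]

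The stronger estimate is equivalent, after rearranging and invoking the hypothesis, to the averaged pseudo-index bound
\[
|\adiv| \cdot \iota_X \le \sum_{D \in \adiv} \bigl( m_D + \langle \rho'(D), \sv\rangle \bigr).
\]
Since the right-hand side is affine in $\sv$ while $\iota_X$ is a constant, convexity reduces the task to the case where $\sv$ is replaced by a single supported vertex $v \in V_{\Supp}(Q^*)$: if $\sv = \sum t_i v_i$ is a convex combination, weighting the per-vertex inequalities by $t_i$ recovers the general case. This is the point where the flexibility in the statement of Proposition~\ref{prop:mineq}, namely allowing $\sv$ to be any point of $\conv(V_{\Supp}(Q^*))$ rather than just a supported vertex, costs nothing.

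Finally, for each supported vertex $v$ my plan is to exhibit $|\adiv|$ rational curves $\{C_D\}_{D \in \adiv}$ in $X$, each meeting the closed $G$-orbit $Y_v \subseteq X$ corresponding to $v$, with total intersection $\sum_D -K_X \cdot C_D \le \sum_D (m_D + \langle \rho'(D), v\rangle)$; then $|\adiv| \cdot \iota_X \le \sum_D -K_X \cdot C_D$ by the definition of the pseudo-index, which closes the loop. The construction of these curves is the main obstacle and generalizes Pasquier's horospherical construction: since $Y_v \cong G/P$ is a flag variety, it carries abundant $B$-stable rational curves, and the task is to build a family whose numerical classes with respect to each $B$-invariant prime divisor are prescribed by the local structure of $X$ near $Y_v$, itself governed by the colored cone at $v$ and by the affine cone $v + \cone(\Sigma)$ from the preceding discussion. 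The most delicate case is when $Y_v \subseteq D$, i.e.\ $\langle \rho'(D), v\rangle = -m_D$, so that $D$ contributes $0$ to the right-hand side and no curve $C_D$ with positive $-K_X$-degree can be attached to it; here the family must compensate by drawing on curves in the normal directions prescribed by the remaining divisors, which is consistent with the supported-vertex characterization through $v + \cone(\Sigma)$. Modulo this curve-existence input, the elementary inequality manipulations above finish the proof.
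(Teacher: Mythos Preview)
Your reduction has a genuine gap: the ``stronger estimate'' $|\adiv|(\iota_X-1)\le \dim X-\rank X$ (equivalently $|\adiv|\cdot\iota_X\le\sum_{D\in\adiv}(m_D+\langle\rho'(D),\sv\rangle)$) is simply false in general, so no curve construction can rescue it. Take the variety $X\cong\Pd^1\times\Pd^2$ of Example~\ref{ex:ghrp} and Example~\ref{ex:ix}: there $|\adiv|=4$, $\iota_X=2$, $\dim X=3$, $\rank X=2$, and the hypothesis of Proposition~\ref{prop:mineq} is satisfied with equality at $\sv=b_1+b_2$. Your inequality would read $4\cdot 1\le 1$. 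At a supported vertex $v$ the discrepancy is even more visible: exactly $\rank X$ of the divisors satisfy $\langle u_D,v\rangle=-1$ (this is precisely what $\Q$-factoriality says about the facet $\widehat{v}$), so these $D$ contribute $0$ to $\sum_D(m_D+\langle\rho'(D),v\rangle)$, and you cannot attach to them a rational curve of positive $-K_X$-degree without overshooting the right-hand side.

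The paper's argument avoids this by never multiplying $\iota_X$ by $|\adiv|$. It introduces the auxiliary quantity
\[
\varepsilon_X=\min\bigl\{m_D(1+\langle u_D,v\rangle):D\in\adiv,\ v\in V_{\Supp}(Q^*),\ \langle u_D,v\rangle\ne-1\bigr\}
\]
and first shows $\iota_X\le\varepsilon_X$ using the curves $\Cs_{D,v}$ and $\Cs_\ell$ already described in Section~\ref{sec:gsfv}. The key estimate is then $\varepsilon_X\cdot(|\adiv|-\rank X)\le\sum_{D\in\adiv}(m_D+\langle\rho'(D),\sv\rangle)$: writing $\sv$ as a convex combination of supported vertices, the terms with $\langle u_D,v\rangle=-1$ are absorbed into the factor $\rank X$ via the $\Q$-factoriality count just mentioned, while the remaining terms are each $\ge\varepsilon_X/m_D-1$. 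Since $\rho_X=|\adiv|-\rank X$, this yields $\rho_X(\iota_X-1)\le\rho_X(\varepsilon_X-1)\le\sum_D(m_D-1+\langle\rho'(D),\sv\rangle)+\rank X$, and the hypothesis finishes the job. In short, the $\rank X$ divisors on the facet $\widehat{v}$ must be subtracted \emph{before} multiplying by the pseudo-index, not after.
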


Motivated by Proposition~\ref{prop:mineq} and the fact that the
intersection $\conv(V_{\Supp}(Q^*)) \cap \cone(\Sigma)$ is not empty
(see~\cite[Lemma~13.3]{gsfv}), we propose to study the following
invariant in order to handle the non-horospherical case (\ie $\Sigma
\ne \emptyset$).

\begin{definition}
  \label{def:is}
  For an arbitrary spherical variety $X$, we define
  \begin{align*}
    \is{X} \coloneqq \sup{} \rleft\{ \sum_{D \in \adiv} \rleft( m_D -
    1 + \langle \rho'(D), \sv \rangle \rright): \sv \in Q^* \cap
    \cone(\Sigma)\rright\} \in \Q_{\ge0} \cup \{\infty\}\text{.}
  \end{align*}
  We will explain in Section~\ref{sec:spherical-skeletons} why the
  value $\is{X}$ is nonnegative and rational.
\end{definition}

We can now state our main conjecture as well as its main implication.

\begin{conj}
  \label{conj:esgp}
  Let $X$ be a complete spherical variety. Then we have
  \begin{align*}
    \is{X} \le \dim X - \rank X\text{,}
  \end{align*}
  where equality holds if and only if $X$ is isomorphic to a toric
  variety.
\end{conj}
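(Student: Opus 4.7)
My strategy is to treat $\is{X}$ as a linear optimization over the polyhedron $Q^* \cap \cone(\Sigma)$: the supremum is attained at some vertex $\sv^*$, and one must bound the resulting objective by $\dim X - \rank X$. First I would reduce to the $\Q$-factorial Gorenstein Fano case via a suitable modification, since that is where $Q^*$ is a full-dimensional polytope, $\mathrm{Prop.}$~\ref{prop:mineq} applies, and the combinatorial dictionary linking $(m_D, \rho'(D))$ to the spherical system is most transparent.

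Next, I would split the objective at $\sv^*$ as
\begin{align*}
  \is{X} \;=\; \sum_{D \in \adiv}(m_D - 1) \;+\; \sum_{D \in \adiv} \langle \rho'(D), \sv^* \rangle
\end{align*}
and estimate the two pieces separately. The first piece is the horospherical quantity, and the bound $\dim X - \rank X$ on it is the content of Pasquier's argument. The second is a correction controlled by the pairings $\langle \rho'(D), \sigma\rangle$ for $\sigma \in \Sigma$, which are prescribed by Luna's axioms for spherical systems. For symmetric varieties specifically, $\Sigma$ is (a rescaling of) the restricted root system of $(G,H)$, the colors admit an explicit description going back to De~Concini--Procesi and Vust, and the relevant pairings take only finitely many values; this should reduce the inequality to a finite verification against the rank-one localizations in Luna's list.

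The hardest step, I expect, is the equality characterization. Equality forces both Pasquier's horospherical inequality to be tight on the horospherical degeneration and the correction $\sum_{D} \langle \rho'(D), \sv^* \rangle$ to attain its maximum value without exceeding the slack left by Pasquier's bound; translating these rigidity constraints through the spherical system should force $\Sigma = \emptyset$, all $m_D = 1$, and the action to reduce to that of a torus, which characterizes toric varieties. The principal obstacle is that $\sv^*$ need not be a vertex of $Q^*$ alone, and its location depends globally on the colored fan of $X$ rather than only on the open orbit; propagating the rigidity statement from local rank-one data to the entire spherical system is the combinatorial heart of the argument, and in the symmetric case is precisely where the explicit restricted-root description of $(G,H)$ becomes indispensable.
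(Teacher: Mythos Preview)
First, a framing issue: the statement you are attempting to prove is Conjecture~\ref{conj:esgp}, which the paper does \emph{not} prove in general. It is established only for horospherical varieties (Pasquier) and for symmetric varieties (Theorem~\ref{th:sym}, proved in Sections~\ref{sec:symmetric}--9 via the tables in Appendix~\ref{sec:tables}). So your plan should be read as a proposed strategy for the symmetric case, and compared to what the paper actually does there.

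Your equality analysis contains a concrete error. You write that the rigidity constraints ``should force $\Sigma = \emptyset$, all $m_D = 1$, and the action to reduce to that of a torus.'' This is false: Example~\ref{ex:ix} exhibits $X \cong \Pd^1 \times \Pd^2$ with $\Sigma = \{\alpha\} \ne \emptyset$, $\is{X} = 1 = \dim X - \rank X$, and $X$ toric. Being isomorphic to a toric variety does \emph{not} mean the given $G$-action is through a torus; the paper's correct characterization (Theorem~\ref{th:vp}, Conjecture~\ref{conj:esgp2}) is that equality holds iff the spherical skeleton $\Rs_X$ matches that of a multiplicity-free space. Your endgame would therefore fail even in cases the paper handles.

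On the inequality itself, your splitting
\[
\is{X} = \sum_{D\in\adiv}(m_D-1) + \sum_{D\in\adiv}\langle \rho'(D), \sv^*\rangle
\]
and the plan to bound the first summand by Pasquier's horospherical estimate does not by itself control $\is{X}$, because the second summand is typically \emph{positive} on $\cone(\Sigma)$ (again Example~\ref{ex:ix}: the first summand is $0$, the second is $1$). The paper does not separate the two pieces; instead it passes to the purely combinatorial invariant $\is{\Rs_X}$ of the spherical skeleton (Section~\ref{sec:spherical-skeletons}), replaces $\Rs$ by the larger reduced elementary skeleton $\Rs^{\vel}$ (Proposition~\ref{prop:elem}), factors $\Rs^{\vel}$ according to the classification of symmetric spherical systems (Corollary~\ref{cor:sym-sph-sys-prod}), and then solves, for each indecomposable factor with $|\Gamma|=1$, the dual linear program explicitly (Section~8 and Appendix~\ref{sec:tables}). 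The equality case is then a finite check against Table~\ref{tab:upper-bound-achieved}, matching precisely the indecomposable linear skeletons. Your proposed reduction to a $\Q$-factorial Gorenstein Fano model is unnecessary for this: $\is{X}$ depends only on $\Rs_X$, and the paper never invokes Fano hypotheses in the proof of Theorem~\ref{th:sym}.
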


\begin{theorem}
  \label{th:bcddsph}
  Let $X$ be a $\Q$-factorial Gorenstein spherical Fano variety and
  assume that Conjecture~\ref{conj:esgp} holds for $X$. Then we have
  \begin{align*}
    \rho_X(\iota_X-1) \le \dim X\text{,}
  \end{align*}
  where equality holds if and only if $X \cong
  (\Pd^{\iota_X-1})^{\rho_X}$.
\end{theorem}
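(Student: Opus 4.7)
The plan is to apply Proposition~\ref{prop:mineq} to a well-chosen test point $\sv$, whose existence is guaranteed by the non-emptiness result \cite[Lemma~13.3]{gsfv}. That lemma produces a point $\sv \in \conv(V_{\Supp}(Q^*)) \cap \cone(\Sigma)$; since $Q^*$ is convex and $V_{\Supp}(Q^*) \subseteq Q^*$, we have $\sv \in Q^* \cap \cone(\Sigma)$, so by Definition~\ref{def:is} the quantity $\sum_{D \in \adiv}(m_D - 1 + \langle \rho'(D), \sv \rangle)$ is bounded above by $\is{X}$, which by the assumed Conjecture~\ref{conj:esgp} is in turn bounded by $\dim X - \rank X$. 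Since $\sv$ also lies in $\conv(V_{\Supp}(Q^*))$, the hypothesis of Proposition~\ref{prop:mineq} is satisfied, and its conclusion yields the desired inequality $\rho_X(\iota_X - 1) \le \dim X$.

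For the equality clause, I would assume $\rho_X(\iota_X - 1) = \dim X$ and argue that this forces the entire chain of inequalities above to be tight. Concretely, one inspects the proof of Proposition~\ref{prop:mineq} to verify that equality in its conclusion forces equality in its hypothesis; combined with the two-step estimate, this yields $\is{X} = \dim X - \rank X$. The equality statement in Conjecture~\ref{conj:esgp} then implies that $X$ is isomorphic to a toric variety, at which point the theorem reduces to the toric case of the generalized Mukai conjecture established by Casagrande~\cite{cas06}, giving $X \cong (\Pd^{\iota_X - 1})^{\rho_X}$. Conversely, for $X \cong (\Pd^{\iota_X - 1})^{\rho_X}$ the equality is a direct computation.

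The main obstacle I expect lies in the equality direction: the inequality is a direct concatenation of Proposition~\ref{prop:mineq}, Definition~\ref{def:is}, and Conjecture~\ref{conj:esgp}, but propagating equality backwards through Proposition~\ref{prop:mineq} requires that its internal argument introduces no slack once its hypothesis becomes an equality. If the proof of Proposition~\ref{prop:mineq} turns out not to be sharp in this sense, one would need either a slight sharpening of that proposition, or an independent argument using the rigidity built into the equality clause of Conjecture~\ref{conj:esgp} to deduce that $X$ is toric before invoking \cite{cas06}.
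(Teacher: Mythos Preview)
Your proposal is correct and follows essentially the same route as the paper: pick $\sv$ via \cite[Lemma~13.3]{gsfv}, bound the sum by $\is{X}$ using Definition~\ref{def:is}, bound $\is{X}$ by $\dim X - \rank X$ via Conjecture~\ref{conj:esgp}, apply Proposition~\ref{prop:mineq}, and reduce the equality case to Casagrande's toric result once Conjecture~\ref{conj:esgp} forces $X$ to be toric.

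Your worry about the equality direction is exactly the right place to look, and the paper handles it in the way you anticipate as a ``slight sharpening'': the unnamed Corollary just before the proof of Theorem~\ref{th:bcddsph} records the chain
\[
\rho_X(\iota_X-1) \;\le\; \sum_{D \in \adiv}\big(m_D - 1 + \langle \rho'(D), \sv\rangle\big) + \rank X,
\]
which is what the proof of Proposition~\ref{prop:mineq} actually establishes. Combined with $\sum_D(\cdots) \le \is{X} \le \dim X - \rank X$, this gives a single chain of inequalities from $\rho_X(\iota_X-1)$ to $\dim X$, so equality at the ends forces $\is{X} = \dim X - \rank X$ directly, with no need to analyze slack inside Proposition~\ref{prop:mineq} separately.
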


Our conjecture also implies a smoothness criterion for spherical
varieties. The local structure theorem for spherical varieties always
reduces the question of smoothness to the case where $X$ is affine and
the derived subgroup $[G, G]$ fixes pointwise the unique closed
$G$-orbit in $X$.

\begin{theorem}
  \label{th:smoothness}
  Let $X$ be a locally factorial affine spherical variety such that
  the derived subgroup $[G,G]$ fixes pointwise the unique closed
  $G$-orbit in $X$.  If Conjecture~\ref{conj:esgp} holds, then $X$ is
  smooth if and only if $\is{X} = \dim X - \rank X$.
\end{theorem}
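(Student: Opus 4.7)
The plan is to prove the equivalence in two directions, both relying on the local structure theorem for spherical varieties. Under the present hypotheses, the local structure theorem yields a global $P^-$-equivariant isomorphism $X \cong R_u(P^-) \times Z$, where $P$ is the parabolic stabilizing the open $B$-orbit of $X$ and $Z$ is an affine toric variety for a suitable quotient torus of the Levi of $P$. Consequently $X$ is smooth if and only if $Z$ is smooth, and the $B$-stable prime divisors of $X$ split into colors (coming from $R_u(P^-)$) and $G$-stable divisors (corresponding to the rays of the defining cone $\sigma$ of $Z$); this lets one write down $m_D$ and $\rho'(D)$ for each $D \in \adiv$ in closed form via Brion's formula \cite[Proposition~4.1]{Brion:cc}.

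\textbf{``Only if'' direction.} Assume $X$ smooth, so $\sigma$ is generated by a subset of a lattice basis of $\Nm$. Plugging the explicit formulas for $m_D$ and $\rho'(D)$ into Definition~\ref{def:is} and using that $Q^* \cap \cone(\Sigma)$ is nonempty, one identifies a maximizer $\sv^* \in Q^* \cap \cone(\Sigma)$ of the linear functional $\sv \mapsto \sum_{D \in \adiv} (m_D - 1 + \langle \rho'(D), \sv \rangle)$. A direct calculation, exploiting the smoothness of $\sigma$ on the toric side and the root-theoretic structure of the colors on the reductive side, shows that the maximum value equals exactly $\dim X - \rank X$.

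\textbf{``If'' direction.} Assume $\is{X} = \dim X - \rank X$ and choose a maximizer $\sv^* \in Q^* \cap \cone(\Sigma)$. Construct a complete $\Q$-factorial Gorenstein Fano spherical embedding $\bar X$ of the open $G$-orbit $G/H$ of $X$ that contains $X$ as a $G$-stable open subvariety and such that $\sv^*$ still lies in $Q^*_{\bar X} \cap \cone(\Sigma)$; then $\is{\bar X} \ge \is{X} = \dim X - \rank X = \dim \bar X - \rank \bar X$. Applying Conjecture~\ref{conj:esgp} to $\bar X$ forces equality, so by the equality case of the conjecture $\bar X$ is toric. Hence $G/H$ is a torus, $X$ is an affine toric variety, and local factoriality implies that $X$ is smooth.

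\textbf{Main obstacle.} The main technical step is the construction of $\bar X$ in the ``if'' direction. Every $G$-stable divisor added to complete $X$ imposes a new half-space constraint on the polyhedron $Q^*$, and there is no reason \emph{a priori} why the fixed maximizer $\sv^*$ should survive in the smaller polyhedron $Q^*_{\bar X}$, nor why one can simultaneously ensure $-K_{\bar X}$ Cartier and ample. One therefore has to add only $G$-stable divisors whose half-space constraints are slack at $\sv^*$, and to choose the multiplicities so that the resulting colored fan meets all of the Fano, Gorenstein, and $\Q$-factoriality conditions; this will use the spherical skeleton machinery developed earlier in the paper.
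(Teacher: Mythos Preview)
Your proposal has two genuine gaps that make the argument break down.

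\textbf{The toric-slice claim is false.} You assert that under the hypotheses of Theorem~\ref{th:smoothness} the local structure theorem yields $X \cong R_u(P^-) \times Z$ with $Z$ an affine \emph{toric} variety. This is not correct. The decomposition with a toric slice applies only to the $P$-stable open subset $X_P \subseteq X$ complementary to all colors, and under the present hypotheses the closed $G$-orbit lies in the closure of \emph{every} color (indeed $\Fm = \Dm$, as shown in the lemma preceding the proof), so $X_P$ misses the closed orbit and tells you nothing about smoothness there. The hypotheses of the theorem describe precisely the situation \emph{after} the local structure theorem has already been applied to a larger variety; $X$ itself is the slice, and it is a genuinely spherical (not toric) $L$-variety. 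The correct replacement for your toric reduction is Camus's criterion \cite[Proposition~6.2]{camus} (see also \cite[Proposition~3.3]{gcscsv}): under these hypotheses, $X$ is smooth if and only if $\Rs_X$ is isomorphic to the spherical skeleton of a multiplicity-free space.

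\textbf{The ``if'' direction draws the wrong conclusion.} Even granting your construction of a complete $\bar X$ with $\is{\bar X} = \dim \bar X - \rank \bar X$, the equality case of Conjecture~\ref{conj:esgp} says only that $\bar X$ is \emph{abstractly} isomorphic to a toric variety; it does \emph{not} say that the $G$-action factors through a torus. Hence your inference ``$G/H$ is a torus, so $X$ is affine toric'' is unjustified. What you actually get from Theorem~\ref{th:vp} is that $\Rs_{\bar X}$ (hence $\Rs_X$) is the skeleton of a multiplicity-free space, and then Camus's criterion gives smoothness. But at that point you have reproduced the paper's proof with an unnecessary and technically delicate detour through constructing $\bar X$; the paper instead passes directly to the combinatorial reformulation (Conjecture~\ref{conj:esgp2}), whose equality case together with Camus's criterion yields both directions in two lines.
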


A comparison with the smoothness criterion
given in \cite{gcscsv} shows that the condition $\is{X} = \dim X - \rank X$
in Theorem~\ref{th:smoothness} would replace the otherwise
necessary consultation of the list in \cite[Section~2]{gcscsv}.

\begin{remark}
  It follows from the work of Pasquier that Conjecture~\ref{conj:esgp}
  holds for horospherical varieties (see
  \cite[Lemme~4.8]{Pasquier:FanoHorospherical} and
  \cite[Lemma~8(iv)]{pas10}).
\end{remark}

We are able to prove Conjecture~\ref{conj:esgp} in the following
further situation: A normal irreducible $G$-variety $X$ is called
\emph{symmetric} if there exists a nontrivial involution $\theta\colon
G \to G$ such that $X$ contains an open $G$-orbit isomorphic to $G/H$
with $(G^{\theta})^\circ \subseteq H \subseteq N_G(G^{\theta})$ where
$G^{\theta} \subseteq G$ denotes the set of fixed points under
$\theta$. Symmetric varieties are known to be spherical (see, for instance,
\cite[Theorem~26.14]{ti}).

\begin{theorem}
  \label{th:sym}
  Conjecture~\ref{conj:esgp} holds for symmetric varieties.
\end{theorem}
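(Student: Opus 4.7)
The strategy is to reduce Conjecture~\ref{conj:esgp} for symmetric varieties to an explicit combinatorial inequality that can be verified using the classification of symmetric pairs. By the work of Vust and De~Concini--Procesi, the spherical system of a symmetric variety with open orbit $G/H$ is completely described by the restricted root system $\Phi_\theta$ of $(G,\theta)$ together with its refined Satake diagram; in particular, the spherically closed spherical roots $\Sigma$ are the simple restricted roots (possibly doubled), and each color $D \in \Dm$ comes equipped with explicit formulas for $\rho'(D) \in \Nm$ and for its coefficient $m_D$ in Brion's anticanonical divisor.

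First I would reduce to a rigid model. Since $\is{X}$ is essentially determined by the spherical skeleton of $X$ introduced in Section~\ref{sec:spherical-skeletons}, I would pass to the wonderful compactification of $G/N_G(H)$, observing that neither side of the inequality $\is{X} \le \dim X - \rank X$ changes under this choice, nor does the property of being toric.

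Next, for $\sv \in \cone(\Sigma)$, writing $\sv = \sum_{\sigma \in \Sigma} t_\sigma \sigma$ with $t_\sigma \ge 0$, the objective function $\sum_{D \in \adiv}\bigl(m_D - 1 + \langle \rho'(D), \sv\rangle\bigr)$ becomes an explicit linear function of the $t_\sigma$, while the constraint $\sv \in Q^*$ pins down the feasible region. The inequality then reduces to showing that the resulting linear program is bounded by $\dim X - \rank X$, which for symmetric varieties equals the number of positive restricted roots counted with the appropriate multiplicities attached to the Satake diagram. Comparing coefficients using the Luna numerics and the Vust classification of colors (types $a$, $a'$, $b$) should yield the inequality.

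The main obstacle is the equality case. Tracing every inequality backwards, saturation should force the Satake diagram to have no compact simple roots, no nontrivial arrows, and no colors, so that the involution $\theta$ acts trivially on a Borel and $G/H$ is a torus, whence $X$ is toric. I expect this final step to require a careful case-by-case check against the list of irreducible symmetric pairs, since uniform arguments across all types of restricted root systems can be elusive in this setting.
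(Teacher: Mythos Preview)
Your proposal has two genuine gaps, one in the reduction step and one in the equality case.

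First, the reduction to the wonderful compactification of $G/N_G(H)$ does not work as stated. The invariant $\is{X}$ depends on the full set $\adiv = \bdiv \cup \gdiv$ of $B$-invariant prime divisors, and in particular on the set $\gdiv$ of $G$-invariant boundary divisors, which varies with the embedding. Different complete equivariant embeddings of the same $G/H$ can give different values of $\is{X}$, so you cannot simply replace $X$ by one distinguished compactification. The paper handles this by passing to the spherical skeleton $\Rs_X$ and proving the chain of inequalities $\is{\Rs} \le \is{\Rs^{\el}} \le \is{\Rs^{\vel}}$ (Proposition~\ref{prop:elem}), thereby reducing to \emph{reduced elementary} skeletons with $|\gdiv|=1$, which are then checked case by case against the classification.

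Second, and more seriously, your description of the equality case is incorrect. You claim that equality forces the Satake diagram to be trivial so that $G/H$ is a torus. But Conjecture~\ref{conj:esgp} asks only that $X$ be isomorphic to a toric variety \emph{as an abstract variety}, not that the $G$-action factor through a torus. By Theorem~\ref{th:vp} this is equivalent to $\Rs_X$ being the spherical skeleton of a multiplicity-free space. Table~\ref{tab:upper-bound-achieved} shows that equality is attained for genuinely non-toric symmetric homogeneous spaces such as $\SO(m{+}1)\cdot Z \subseteq \SL(m{+}1)$, $\Sp(2m{+}2)\cdot Z \subseteq \SL(2m{+}2)$, and $\mathrm{F}_4 \subseteq \mathrm{E}_6$; in each of these, $G/H$ is far from a torus, yet the corresponding skeleton is linear. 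The paper's treatment of equality (Section~9) accordingly proceeds by showing that each irreducible factor $\Rs_i^{\vel}$ lands in this short list, that $\Rs^{\el}=\Rs^{\vel}$, and finally that no nontrivial ``splitting'' $\Rs \trianglelefteq \Rs^{\el}$ can preserve $\is{\Rs}$ (Lemma~\ref{lemma:lu}); this is a substantially different argument from the one you sketch.
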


\subsection*{List of general notation}
\renewcommand{\descriptionlabel}[1]{\hspace\labelsep #1}
\begin{description}[leftmargin=8em,style=nextline]
\item[{$\Pm(A)$}] power set of a set $A$,
\item[{$G^\circ$}] identity component of an algebraic group $G$,
\item[{$\Lambda_\Q$}] $\Lambda \otimes_\Z \Q$ for a lattice $\Lambda$,
\item[{$\Cm^\vee$}] dual cone to a cone $\Cm$ in a vector space $V$, \ie \\
  $\Cm^\vee \coloneqq \{ v \in V^*: \langle u, v \rangle \ge 0 \text{
    for every $u \in \Cm$}\}$,
\item[{$Q^*$}] dual polytope to a polytope $Q$ in a vector space $V$, \ie \\
  $Q^* \coloneqq \{ v \in V^* : \langle u, v \rangle \ge -1 \text{ for
    every $u \in Q$}\}$,
\item[{$\widehat{F}$}] dual face to a face $F$ of a polytope $Q$, \ie \\
  $\widehat{F} \coloneqq \{ v \in Q^* : \langle u, v \rangle = -1
  \text{ for every $u \in F$}\}$,
\item[{$\topint(A)$}] topological interior of a subset $A$ in some
  finite-dimensional vector space,
\item[{$A^\circ$}] relative interior of a subset $A$ in some finite-dimensional
vector space. 
\end{description}

\section{Spherical systems and Luna diagrams}
\label{sec:spher-syst-luna}

Spherical varieties can be described combinatorially.  A closed
subgroup $H$ of a connected reductive complex algebraic group $G$ is
called \emph{spherical} if $G/H$ contains an open orbit for a Borel
subgroup, and then $G/H$ is called a \emph{spherical homogeneous
  space}.  The Luna conjecture provides a description of the spherical
subgroups of a fixed connected reductive group
(see~\cite{Luna:typea}), which has recently been proven (see~\cite{bp,
  cf2, losev-uniq}).  Then, for a fixed spherical homogeneous space
$G/H$, the historically earlier Luna-Vust theory (see~\cite{lunavust,
  knopsph}) describes the $G$-equivariant open embeddings $G/H
\hookrightarrow X$ into a normal irreducible $G$-variety $X$.

In this section, we give a brief summary on the combinatorial
description of spherical subgroups, \ie the first of the two steps
mentioned above.  This section does not contain any new results.  In
addition to the references above, we use \cite{f4},
\cite[Section~30.11]{ti}, and \cite[Section~2]{BS:Moduli} as general
references.

\subsection*{Spherically closed spherical systems}
Let $R$ be a root system, and let $S \subseteq R$ be a choice of
simple roots. We write $\Xf(R)$ for the root lattice of $R$.  For
$\gamma \in \Xf(R)$ we write $\Supp(\gamma) \subseteq S$ for the
support of $\gamma$, \ie the set of simple roots having a nonzero
coefficient in the expression for $\gamma$ as linear combination of
simple roots.

\begin{definition}
  The set $\Sigma^{sc}(R)$ of \emph{spherically closed spherical roots
    of $R$} is defined to consist of those elements $\gamma \in
  \Xf(R)$ which are listed in the second column of Table~\ref{tab:sr}
  after applying the usual Bourbaki numbering
  (see~\cite{Bourbaki:Lie}) to the simple roots in $\Supp(\gamma)$.
\end{definition}

\begin{table}[!ht]
  \begin{tabular}{ccc}
    \toprule
    diagram & spherical root & coefficient \\
    \midrule
    \parbox[c][][c]{4cm}{\centering
      \begin{picture}(0,2100)(0,-1050)
        \put(0,0){\usebox{\aone}}
        \put(0,0){\usebox{\tinyone}}
        \put(0,-1200){\usebox{\tinyone}}
      \end{picture}}
    & \parbox[c][][c]{4cm}{\centering
      $\alpha_1$} & $1$ \\
    \parbox[c][][c]{4cm}{\centering
      \begin{picture}(0,2100)(0,-1050)
        \put(0,0){\usebox{\aprime}}
        \put(0,-1200){\usebox{\tinytwo}}
      \end{picture}}
    & \parbox[c][][c]{4cm}{\centering
      $2\alpha_1$} & $1$ \\
    \parbox[c][][c]{4cm}{\centering
      \begin{picture}(1800,2100)(0,-1050)
        \put(900,-1200){\usebox{\tinytwo}}
        \multiput(0,0)(1800,0){2}{\usebox{\vertex}}
        \multiput(0,0)(1800,0){2}{\usebox{\wcircle}}
        \multiput(0,-300)(1800,0){2}{\line(0,-1){600}}
        \put(0,-900){\line(1,0){1800}}
      \end{picture}}
    & \parbox[c][][c]{4cm}{\centering
      $\alpha_1 + \alpha'_1$} & $2$ \\
    \parbox[c][][c]{4cm}{\centering
      \begin{picture}(5400,2100)(0,-1050)
        \put(0,0){\usebox{\mediumam}}
        \put(0,0){\usebox{\tinyone}}
        \put(5400,0){\usebox{\tinyone}}
      \end{picture}}
    & \parbox[c][][c]{4cm}{\centering
      $\alpha_1 + \dots + \alpha_n$} & $n$ \\
    \parbox[c][][c]{4cm}{\centering
      \begin{picture}(3600,2100)(0,-1050)
        \put(0,0){\usebox{\dthree}}
        \put(1800,0){\usebox{\tinytwo}}
      \end{picture}}
    & \parbox[c][][c]{4cm}{\centering
      $\alpha_1 + 2\alpha_2 + \alpha_3$} & $4$ \\
    \parbox[c][][c]{4cm}{\centering
      \begin{picture}(7200,2100)(0,-1050)
        \put(7200,0){\usebox{\dcircle}}
        \put(0,0){\usebox{\shortbm}}
	\put(0,0){\usebox{\tinyone}}
      \end{picture}}
    & \parbox[c][][c]{4cm}{\centering
      $\alpha_1 + \dots + \alpha_n$} & $n$ \\
    \parbox[c][][c]{4cm}{\centering
      \begin{picture}(7200,2100)(0,-1050)
        \put(0,0){\usebox{\shortbprimem}}
      \end{picture}}
    & \parbox[c][][c]{4cm}{\centering
      $2\alpha_1 + \dots + 2\alpha_n$} & $2n - 1$ \\
    \parbox[c][][c]{4cm}{\centering
      \begin{picture}(3600,2100)(0,-1050)
        \put(3600,0){\usebox{\tinytwo}}
        \put(0,0){\usebox{\bthirdthree}}
      \end{picture}}
    & \parbox[c][][c]{4cm}{\centering
      $\alpha_1 + 2\alpha_2 + 3\alpha_3$} & $6$ \\
    \parbox[c][][c]{4cm}{\centering
      \begin{picture}(9000,2100)(0,-1050)
        \put(0,0){\usebox{\dcircle}}
        \put(0,0){\usebox{\shortcm}}
        \put(1800,0){\usebox{\tinyone}}
      \end{picture}}
    & \parbox[c][][c]{4cm}{\centering
      $\alpha_1 + 2\alpha_2 + \dots + 2\alpha_{n-1} + \alpha_n$} & $2n - 2$ \\
    \parbox[c][][c]{4cm}{\centering
      \begin{picture}(9000,2100)(0,-1050)
        \put(0,0){\usebox{\shortcm}}
        \put(1800,0){\usebox{\tinyone}}
      \end{picture}}
    & \parbox[c][][c]{4cm}{\centering
      $\alpha_1 + 2\alpha_2 + \dots + 2\alpha_{n-1} + \alpha_n$} & $2n -1$ \\
    \parbox[c][][c]{4.5cm}{\centering
      \begin{picture}(6600,2100)(0,-1050)
        \put(0,0){\usebox{\tinytwo}}
        \put(0,0){\usebox{\shortdm}}
      \end{picture}}
    & \parbox[c][][c]{4.5cm}{\centering
      $2\alpha_1 + \dots + 2\alpha_{n-2} + \alpha_{n-1} + \alpha_n$} & $2n-2$ \\
    \parbox[c][][c]{4cm}{\centering
      \begin{picture}(5400,2100)(0,-1050)
        \put(0,0){\usebox{\ffour}}
        \put(5400,0){\usebox{\tinyone}}
      \end{picture}}
    & \parbox[c][][c]{4cm}{\centering
      $\alpha_1 + 2\alpha_2 + 3\alpha_3 + 2\alpha_4$} & $11$ \\
    \parbox[c][][c]{4cm}{\centering
      \begin{picture}(1800,2100)(0,-1050)
        \put(0,0){\usebox{\gsecondtwox}}
        \put(0,0){\usebox{\dcircle}}
        \put(1800,0){\usebox{\tinyone}}
      \end{picture}}
    & \parbox[c][][c]{4cm}{\centering
      $\alpha_1 + \alpha_2$} & $2$ \\
    \parbox[c][][c]{4cm}{\centering
      \begin{picture}(1800,2100)(0,-1050)
        \put(0,0){\usebox{\gprimetwo}}
      \end{picture}}
    & \parbox[c][][c]{4cm}{\centering
      $4\alpha_1 + 2\alpha_2$} & $5$ \\
    \bottomrule
  \end{tabular}
  \caption{Spherically closed spherical roots.}
  \label{tab:sr}
\end{table}

If $\Sigma \subseteq \Sigma^{sc}(R)$ is a subset, we write $\Lambda
\subseteq \Xf(R)$ for the sublattice spanned by $\Sigma$ and
$\Lambda^* \coloneqq \Hom(\Lambda,\Z)$ for its dual lattice.

\begin{definition}
  \label{def:adapted}
  Let $\Sigma \subseteq \Sigma^{sc}(R)$ be a subset, and let $\Dm^a$
  be a multiset consisting of (not necessarily distinct) elements of
  $\Lambda^*$.  It is convenient to consider $\Dm^a$ as an abstract
  set equipped with a map $\rho \colon \Dm^a \to \Lambda^*$.  For
  every $\alpha \in \Sigma \cap S$ we define $\Dm^a(\alpha)
  \coloneqq\{ D \in \Dm^a : \langle \rho(D), \alpha \rangle = 1\}$. We
  say that $\Dm^a$ is \emph{adapted to $\Sigma$} if the following
  properties are satisfied:
  \begin{enumerate}
  \item[($\operatorname{A1}$)] For every $D \in \Dm^a$, $\gamma \in
    \Sigma$ we have $\langle \rho(D), \gamma \rangle \le 1$ with
    equality holding if and only if $\alpha \coloneqq \gamma \in
    \Sigma \cap S$ and $D \in \Dm^a(\alpha)$.
  \item[($\operatorname{A2}$)] For every $\alpha \in \Sigma \cap S$
    the set $\Dm^a(\alpha)$ consists of exactly two elements
    $D_\alpha^+$, $D_\alpha^-$ such that $\rho(D_\alpha^+) +
    \rho(D_\alpha^-) = \alpha^\vee|_\Lambda$.
  \item[($\operatorname{A3}$)] We have $\Dm^a = \bigcup_{\alpha \in
      \Sigma \cap S} \Dm^a(\alpha)$.
  \end{enumerate}
\end{definition}

\begin{definition}
  A \emph{spherically closed spherical $R$-system} is a triple
  $(\Sigma, S^p, \Dm^a)$ consisting of $\Sigma \subseteq
  \Sigma^{sc}(R)$, $S^p \subseteq S$, and $\Dm^a$ adapted to $\Sigma$
  such that the following axioms are satisfied:
  \begin{enumerate}
  \item[($\operatorname{\Sigma1}$)] If $\alpha \in (\tfrac{1}{2}\Sigma)
    \cap S$, then $\langle \alpha^\vee, \szs\rangle \subseteq 2\Z$ and
    $\langle \alpha^\vee, \Sigma \setminus \{2\alpha\}\rangle \le 0$.
  \item[($\operatorname{\Sigma2}$)] If $\alpha, \beta \in S$ are
    orthogonal and $\alpha + \beta \in \Sigma$, then
    $\alpha^\vee|_\szs = \beta^\vee|_\szs$.
  \item[($\operatorname{S}$)] Every $\gamma \in \Sigma$ is
    \emph{compatible} with $S^p$, \ie
    \begin{enumerate}
    \item[1)] the set $\Supp(\gamma) \cap S^p$ coincides with the set
      of vertices without any (normal, shadowed, or dotted) circles
      around, above, or below themselves in the first column of
      Table~\ref{tab:sr},
    \item[2)] the simple roots in $S^p \setminus \Supp(\gamma)$ are
      orthogonal to $\gamma$.
    \end{enumerate}
  \end{enumerate}
\end{definition}

\begin{remark} 
  Our definition of \enquote{compatible} (which is equivalent to the
  definition in \cite[Definition~2.5]{BS:Moduli}) is more restrictive
  than the definition in \cite[1.1.6]{f4}, where a more general class
  of not necessarily spherically closed spherical systems is
  considered.
\end{remark}

\begin{remark}
  An inspection of Table~\ref{tab:sr} shows that in the situation of
  axiom~($\operatorname{S}$) we have in fact $\langle \alpha^\vee,
  \gamma\rangle = 0$ for every $\alpha \in S^p$ (not only for $\alpha
  \in S^p \setminus \Supp \gamma$).
\end{remark}

\begin{definition}
  If $\Ss_1 \coloneqq (\Sigma_1, S^p_1, \Dm^a_1)$ and $\Ss_2 \coloneqq
  (\Sigma_2, S^p_2, \Dm^a_2)$ are spherically closed spherical systems
  for root systems $R_1$ and $R_2$ respectively, we define the product
\begin{align*}
  \Ss_1 \times \Ss_2 \coloneqq (\Sigma_1\cup\Sigma_2, S^p_1 \cup
  S^p_2, \Dm^a_1 \cup \Dm^a_2)
\end{align*}
with $\langle \rho(\Dm_1^a), \Sigma_2 \rangle = \{0\}$ and $\langle
\rho(\Dm_2^a), \Sigma_1 \rangle = \{0\}$, which is a spherically
closed spherical system for the root system $R_1 \times R_2$.
\end{definition}

\subsection*{Augmentations of spherically closed spherical systems}
Let $G$ be a connected reductive complex algebraic group, let $B
\subseteq G$ be a Borel subgroup, and let $T \subseteq B$ be a maximal
torus such that $(G, T)$ has root system $R$ and $S$ is the set of
simple roots corresponding to $B$.

\begin{definition}
  Let $\Ss \coloneqq (\Sigma, S^p, \Dm^a)$ be a spherically closed
  spherical $R$-system. An \emph{augmentation of $\Ss$ for $G$} is a
  pair $(\Mm, \rho')$ consisting of a sublattice $\Mm \subseteq
  \Xf(B)$ containing $\Sigma$ and a map $\rho' \colon \Dm^a \to \Nm
  \coloneqq \Hom( \Mm, \Z )$ such that the following axioms are
  satisfied:
  \begin{enumerate}
  \item[($\operatorname{a1}$)] For every $D \in \Dm^a$ we have
    $\rho'(D)|_\Lambda = \rho(D)$.
  \item[($\operatorname{a2}$)] For every $\alpha \in \Sigma \cap S$ we
    have $\rho'(D_\alpha^+) + \rho'(D_\alpha^-) = \alpha^\vee|_{\Mm}$.
  \item[($\operatorname{\sigma 1}$)] If $\alpha \in (\tfrac{1}{2}
    \Sigma) \cap S$, then $\alpha \not \in \Mm$ and $\langle
    \alpha^\vee, \Mm \rangle \subseteq 2\Z$.
  \item[($\operatorname{\sigma 2}$)] If $\alpha, \beta \in S$ are
    orthogonal and $\alpha + \beta \in \Sigma$, then
    $\alpha^\vee|_{\Mm} = \beta^\vee|_{\Mm}$.
  \item[($\operatorname{s}$)] For every $\alpha \in S^p$ we have
    $\langle \alpha^\vee, \Mm \rangle = \{ 0 \}$.
  \end{enumerate}
  A spherically closed spherical system together with an augmentation
  is called an \emph{augmented spherically closed spherical system}.
\end{definition}

We explain how to associate to a spherical subgroup $H \subseteq G$ an
augmented spherically closed spherical system.  The stabilizer of the
open $B$-orbit in $G/H$ is a parabolic subgroup of $G$ containing $B$,
hence uniquely determines a set $S^p \subseteq S$ of simple roots.

We denote by $\Mm \subseteq \Xf(B)$ the weight lattice of
$B$-semi-invariants in the function field $\C(G/H)$ and by $\bdiv$ the
set of $B$-invariant prime divisors in $G/H$.  The elements of $\bdiv$
are called the \emph{colors}. To any $D \in \bdiv$ we associate the
element $\rho'(D) \in \Nm \coloneqq \Hom(\Mm, \Z)$ defined by $\langle
\rho'(D), \chi\rangle \coloneqq \nu_D(f_\chi)$ where $\langle \cdot,
\cdot \rangle\colon \Nm \times \Mm \to \Z$ denotes the natural pairing and
$f_\chi \in \C(G/H)$ denotes a $B$-semi-invariant rational function of
weight $\chi \in \Mm$ (which is uniquely determined up to 
a constant factor). For $\alpha \in S$ we write $\Dm(\alpha)
\subseteq \Dm$ for the subset of colors moved by the minimal parabolic
subgroup $P_\alpha \subseteq G$ containing $B$ and
corresponding to the simple root
$\alpha$, \ie the colors $D$ such that $P_\alpha \cdot D \ne D$.

We denote by $\Vm$ the set of $G$-invariant discrete valuations $\nu\colon
\C(G/H)^* \to \Q$.  The assignment $\langle \nu, \chi \rangle
\coloneqq \nu(f_\chi)$ for $\nu \in \Vm$ induces an inclusion $\Vm
\subseteq \Nm_\Q$, where $\Vm$ is known to be a cosimplicial cone
(see~\cite{brg}) called the \emph{valuation cone}. Every extremal ray
of the simplicial cone $-\Vm^\vee \subseteq \Mm_\Q$ contains at least
one spherically closed spherical root which is compatible with
$S^p$. We pick the shortest one of these from every extremal ray of
$-\Vm^\vee$ and denote by $\Sigma$ their collection.

Finally, we set $\Dm^a \coloneqq \bigcup_{\alpha \in \Sigma \cap S}
\Dm(\alpha)$ and define $\rho\colon \Dm \to \Lambda^*$ by $\rho(D)
\coloneqq \rho'(D)|_{\Lambda}$.  It will not be harmful to drop the
notation of restricting to $\Dm^a \subseteq \Dm$, \ie we simply write
$\rho'$ for $\rho'|_{\Dm^a}$ and $\rho$ for $\rho|_{\Dm^a}$.

\begin{prop}
  \label{prop:clc}
  For a spherical subgroup $H \subseteq G$ the triple $(\Sigma, S^p,
  \Dm^a)$ defined as in the preceding paragraph is a spherically
  closed spherical $R$-system, and $(\Mm, \rho')$ is an augmentation
  for $G$.  This assignment defines a bijection
  \begin{align*}
    \rleft\{\begin{varwidth}{5in}{spherical subgroups $H \subseteq
        G$\\up to conjugation}
    \end{varwidth}\rright\}
    \leftrightarrow \rleft\{\begin{varwidth}{5in}{augmented
        spherically closed\\spherical systems for
        $G$}\end{varwidth}\rright\}\text{.}
  \end{align*}
\end{prop}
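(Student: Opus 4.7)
The proposition comprises two assertions: first, that the invariants $(\Sigma, S^p, \Dm^a)$ and $(\Mm, \rho')$ extracted from a spherical subgroup $H$ do satisfy all of the listed axioms, and second, that the resulting assignment descends to a bijection on $G$-conjugacy classes. Since the statement is a recapitulation of (one formulation of) the now-established Luna conjecture, the plan is to delineate which part of the existing literature supplies each ingredient rather than to produce a new argument.

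\textbf{Verification of the axioms.} I would proceed axiom by axiom. Axiom ($\operatorname{A3}$) holds by our definition of $\Dm^a$, and ($\operatorname{a1}$) and ($\operatorname{a2}$) are immediate from the constructions of $\rho'$ and $\rho = \rho'|_\Lambda$. The two-element structure of $\Dm^a(\alpha)$ for $\alpha \in \Sigma \cap S$ appearing in ($\operatorname{A2}$), together with the identity $\rho(D_\alpha^+) + \rho(D_\alpha^-) = \alpha^\vee|_\Lambda$, is the classical description by Luna of the colors moved by the minimal parabolic attached to a simple root that is a spherical root, and is contained in \cite{f4}. Axiom ($\operatorname{A1}$) follows from the identification $-\Vm^\vee = \cone(\Sigma)$ together with the fact that each $\rho'(D)$ pairs non-positively with every $G$-invariant valuation, both of which are due to Brion and Knop (see~\cite{brg, knopsph}). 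Axioms ($\operatorname{\Sigma 1}$), ($\operatorname{\Sigma 2}$), ($\operatorname{\sigma 1}$), ($\operatorname{\sigma 2}$) are standard properties of spherically closed spherical roots and of the little Weyl group, again due to Knop. Axiom ($\operatorname{S}$) holds because, by construction, $\Sigma$ consists of the shortest spherically closed spherical roots on the extremal rays of $-\Vm^\vee$ that are compatible with $S^p$; Table~\ref{tab:sr} enumerates precisely the possibilities. Axiom ($\operatorname{s}$) records the fact that $S^p$ corresponds to the parabolic subgroup stabilizing the open $B$-orbit, so that the associated characters restrict trivially to $\Mm$.

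\textbf{Bijectivity.} Injectivity of the assignment is Losev's uniqueness theorem~\cite{losev-uniq}: two spherical subgroups producing the same augmented spherically closed spherical system are $G$-conjugate. Surjectivity is the existence half of the Luna conjecture, settled in the generality required here by Bravi--Pezzini~\cite{bp} and by Cupit-Foutou~\cite{cf2}; their strategy is, given an abstract augmented spherically closed spherical system, to construct a wonderful variety whose combinatorial invariants match and then to recover the prescribed spherical homogeneous space via a standard normalization and central-torus extension.

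The main obstacle lies entirely in the existence part of the bijection. Its proof proceeds by an induction on primitive spherical systems together with a case-by-case analysis over the Dynkin types of $G$, and is highly non-trivial. Since our role in this section is purely expository and \enquote{does not contain any new results}, we do not reproduce this argument but treat the cited works as a black box; the only thing to check on our side is that the combinatorial bookkeeping chosen above is consistent with theirs, which is what the axiom-by-axiom verification in the first step accomplishes.
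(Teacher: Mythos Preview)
Your approach is essentially the same as the paper's: both treat the result as a citation to the literature, with the paper dispatching it in one sentence by invoking \cite[Proposition~6.4]{Luna:typea} together with the Luna conjecture, while you unpack the reference axiom by axiom and correctly separate the uniqueness half (Losev) from the existence half (Bravi--Pezzini, Cupit-Foutou) of the bijection.

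One genuine imprecision in your unpacking: the justification you give for axiom~($\operatorname{A1}$) is not correct. The assertion that \enquote{each $\rho'(D)$ pairs non-positively with every $G$-invariant valuation} does not type-check, since both $\rho'(D)$ and the $G$-invariant valuations live in $\Nm_\Q$, and in any case neither this nor the identification $-\Vm^\vee = \cone(\Sigma)$ yields the bound $\langle \rho(D), \gamma\rangle \le 1$ for $\gamma \in \Sigma$ together with its equality characterization. That bound is a specific structural result of Luna on colors of type~$a$ and is precisely part of what \cite[Proposition~6.4]{Luna:typea} packages. Since your argument is expressly expository and ultimately defers to these sources, this does not break the proof, but the sentence as written should be replaced by a direct citation rather than the attempted derivation.
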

\begin{proof}
This is \cite[Proposition~6.4]{Luna:typea} together with
the Luna conjecture mentioned at the beginning of the present
section, which states that spherically closed spherical
subgroups correspond to spherically closed spherical systems.
\end{proof}

\begin{remark}
  Let $H \subseteq G$ be a spherical subgroup, and let $\Ss \coloneqq
  (\Sigma, S^p, \Dm^a)$ be the associated spherically closed spherical
  system. The \emph{trivial} pair $(\Lambda, \rho)$ is an augmentation
  of $\Ss$ for $G$.  The corresponding spherical subgroup $\overline{H} \subseteq G$,
  which is called the \emph{spherical closure} of $H$,
  can be obtained as follows: We identify the $G$-equivariant
  automorphism group of $G/H$ with $N_G(H)/H$.  Then $N_G(H)$ acts on
  $\Dm$, and we define $\overline{H} \subseteq N_G(H)$ to be the
  kernel of this action. 
\end{remark}

\subsection*{Luna diagrams of spherically closed spherical systems}

A \emph{Luna diagram} is a convenient way to visualize a spherically
closed spherical $R$-system $\Ss \coloneqq (\Sigma, S^p, \Dm^a)$.  We
recall from \cite[1.2.4]{f4} how a Luna diagram is arranged.

We begin by drawing the Dynkin diagram of the root system $R$.  The
spherically closed spherical roots in $\Sigma$ are represented as
indicated in the first column of Table~\ref{tab:sr}, but without the
dotted circles and without the small numbers.  Note that we deviate
from \cite[1.2.4]{f4}, where small numbers are sometimes required to
be drawn (for an explanation, see Remark~\ref{rem:scsr}).

We continue by representing the set $S^p$. This is done by drawing a
(non-shadowed) circle around each vertex corresponding to a simple
root not in $S^p$ which does not yet have any circle around, above, or
below itself.

It remains to visualize $\Dm^a$. Recall that for every $\alpha \in
\Sigma \cap S$ there are exactly two elements in $\Dm(\alpha)
\subseteq \Dm^a$, which we denote by $D_\alpha^+$ and $D_\alpha^-$.
It is always possible to choose $D_\alpha^+$ such that $\langle
\rho(D_\alpha^+), \Sigma \rangle \subseteq \{1, 0,-1\}$.  We identify
$D_\alpha^+$ with the circle above and $D_\alpha^-$ with the circle
below the vertex $\alpha$.  Let $\gamma \in \Sigma$ be distinct from
$\alpha$. We have $\langle \rho(D_\alpha^+), \gamma \rangle = 1$ if
and only if $\beta \coloneqq \gamma \in S$ and $D_{\alpha}^+ =
D_{\beta}^+$ or $D_{\alpha}^+ = D_{\beta}^-$.  Otherwise, if $\gamma$
is not orthogonal to $\alpha$, the value $\langle \rho(D_\alpha^+),
\gamma \rangle$ could be either $0$ or $-1$.  This ambiguity is
resolved by adding an arrow starting from the circle corresponding to
$D_\alpha^+$ pointing towards $\gamma$ in the case where $\langle
\rho(D_\alpha^+), \gamma \rangle = -1$.  In order to complete the Luna
diagram, we join by a line those circles corresponding to the same
element in $\Dm^a$. 

It is clear that a spherically closed spherical system may be
recovered from its Luna diagram: The set $\Sigma$ can be directly read
off the Luna diagram by inspecting Table~\ref{tab:sr}. The set
$S^p$ consists exactly of those simple roots which do not have a
circle around, above, or below themselves.
Now consider the set of those circles
which are above or below a
simple root having circles both above and below itself.
Circles in this set are considered to be equivalent if they are
joined by a line. The abstract set $\Dm^a$ is defined to coincide with the set
of equivalence classes of such circles.
It remains to determine the map $\rho \colon \Dm^a \to
\Lambda^*$ (where $\Lambda \subseteq \Xf( R )$ is the sublattice
generated by $\Sigma$). This is achieved by exploiting properties
(A1) and (A2) of Definition~\ref{def:adapted} and
the rule for placing the arrows as described in the preceding paragraph.

The full list of Luna diagrams for symmetric spaces is given in
Appendix~\ref{sec:luna-diagr-symm}.

\begin{example}
  Consider the Luna diagram No.~3 for $l = 1$, $m \ge 1$ in Appendix
  \ref{sec:luna-diagr-symm} with the usual numbering of simple
  roots. Then, according to Table \ref{tab:sr}, we have $\Sigma = \{ \alpha_k {+}
  \alpha_{2m{+}2{-}k} : k = 1, 2, \ldots, m \} \cup \{ \alpha_{m{+}1}
  \}$. As every simple root has a circle around, above, or below itself, we have $S^p
  = \emptyset$.

  It remains to determine the map $\rho \colon \Dm^a \to \Lambda^*$ for
  the abstract set $\Dm^a = \{ D_{\alpha_{m{+}1}}^{+}, D_{\alpha_{m{+}1}}^{-} \}$.
  By property (A1) of Definition~\ref{def:adapted},
  we have $\langle \rho(D_{\alpha_{m{+}1}}^{+}), \alpha_{m{+}1}\rangle =
  \langle \rho(D_{\alpha_{m{+}1}}^{-}), \alpha_{m{+}1}\rangle = 1$.
  The arrow means that we have
  $\langle \rho(D_{\alpha_{m{+}1}}^{+}), \alpha_{m}+\alpha_{m{+}2}\rangle = -1$.
  As we have $\langle \alpha_{m{+}1}^\vee, \alpha_{m}+\alpha_{m{+}2}\rangle = -2$,
  we obtain $\langle \rho(D_{\alpha_{m{+}1}}^{-}), \alpha_{m}+\alpha_{m{+}2}\rangle = -1$
  from property (A2) of Definition~\ref{def:adapted}. Finally, consider $\gamma \coloneqq
  \alpha_k + \alpha_{2m{+}2{-}k}$ for $1 \le k < m$. As we have
  $\langle \alpha^\vee_{m{+}1}, \gamma \rangle = 0$,
  we obtain $\langle \rho(D_{\alpha_{m{+}1}}^{+}), \gamma\rangle =
  \langle \rho(D_{\alpha_{m{+}1}}^{-}), \gamma\rangle = 0$ from properties (A1) and (A2)
  of Definition~\ref{def:adapted}. 
\end{example}

\subsection*{The full set of colors}
Let $\Ss$ be a spherically closed spherical system and $(\Mm, \rho')$
an augmentation. As always, we denote by $\Dm$ the set of colors of
its associated spherical homogeneous space, equipped with the map
$\rho'\colon \Dm \to \Nm$.

The augmented spherically closed spherical system only retains the
restricted map $\rho'\colon \Dm^a \to \Nm$, but the full map
$\rho'\colon \Dm \to \Nm$ may be recovered as follows
(for details, we refer to \cite[2.3]{Luna:typea}, see also \cite[1.2.4]{f4}): The elements of
$\Dm$ correspond to the circles in the Luna diagram of $\Ss$ where
elements are identified whenever the corresponding circles are joined
by a line. Under this correspondence, the set $\Dm(\alpha)$ of colors
moved by $P_\alpha$ contains exactly those colors which correspond to
a circle above, below, or around the vertex $\alpha$.

If $D \in \Dm$ corresponds to a circle above or below $\alpha \in \Sigma \cap S$,
we have $D \in \Dm^a$ and $\rho'(D)$ is known. If $D$
corresponds to a circle below $\alpha \in (\tfrac{1}{2}\Sigma) \cap S$,
we have $\rho'(D) = \frac{1}{2}\alpha^\vee|_{\Mm}$. Otherwise, $D$
corresponds to a circle around $\alpha \in S$, and we have $\rho'(D) =
\alpha^\vee|_{\Mm}$.

\subsection*{The canonical divisor of a spherical variety}

We consider an arbitrary spherical variety $X$ with open $G$-orbit
$G/H \subseteq X$. We denote by $\Delta$ the set of $B$-invariant
prime divisors in $X$.  By naturally identifying the $B$-invariant
prime divisors in $G/H$ with their closures in $X$, we obtain an
inclusion $\bdiv \subseteq \adiv$ and can extend the map $\rho' \colon
\bdiv \to \Nm$ to a map $\rho' \colon \adiv \to \Nm$ in the obvious
way. The notation in this section is then in agreement with the
introduction.

Brion has shown (see~\cite[Proposition~4.1]{Brion:cc}) that there is a
natural choice of positive integers $m_D$ such that
\begin{align*}
  -K_X \coloneqq \sum_{D\in\adiv} m_DD
\end{align*}
is an anticanonical divisor of $X$, where we have $m_D = 1$ for $D \in
\adiv \setminus \bdiv$ (\ie for the $G$-invariant prime divisors $D$).
Moreover, the coefficients $m_D$ for $D \in \bdiv$ depend only on the
spherical homogeneous space $G/H$.

In fact, they depend only on the spherically closed spherical system
associated to the spherical homogeneous space $G/H$, as can be seen
from the following formulae due to Luna (see \cite[Section
3.6]{Luna:cc}): For $I \subseteq S$ we denote by $\rho_I$ the half-sum
of positive roots in the root system generated by $I$.  Let $\alpha
\in S \setminus S^p$ and $D \in \Dm(\alpha)$.  Then we have
\begin{align*}
  m_D & = 1 &&\text{ if $\alpha \in \Sigma$ or $2\alpha \in \Sigma$,}\\
  m_D &= \langle \alpha^\vee, 2\rho_S - 2\rho_{S^p} \rangle && \text{
    otherwise.}
\end{align*}

\subsection*{Explanation of Table~\ref{tab:sr}}
For a spherically closed spherical root $\gamma$, the first column
shows how it is represented in Luna diagrams on the Dynkin diagram
corresponding to $\Supp \gamma$. The vertices without any (normal,
shadowed, or dotted) circles around, above, or below themselves
correspond to the set $\Supp \gamma \cap S^p$.
  
The normal and shadowed circles correspond to colors whose existence
is implied by the spherically closed spherical root $\gamma$.  If $D$
is such a color, then the small number near the corresponding circle
is the value of $\langle \rho(D), \gamma \rangle$.  On a Luna diagram,
the dotted circles will always be replaced by some other type of
circle (this happens automatically when a Luna diagram is arranged
according to the above description).
 
The spherical root $\alpha_1 + 2\alpha_2 + \dots + 2\alpha_{n-1} +
\alpha_n$ with support of type $C_n$ appears twice in the table, but
with the set $\Supp \gamma \cap S^p$ differing, which means that both
possibilities are allowed.

It follows from the above formulae and axiom $(\operatorname{S})$ that
the coefficients $m_D$ in the expression for the anticanonical divisor
are determined by $\gamma$ and $\Supp \gamma \cap S^p$ for the colors
$D$ corresponding to a normal or shadowed circle in the first
column. The coefficient $m_D$ is given in the third column. If more
than one color appears in the first column, the coefficient will apply
to both of them.

\begin{remark}
  \label{rem:scsr}
  We explain how Table~\ref{tab:sr} differs from
  \cite[Table~2]{f4}. The notions introduced here will not be used
  elsewhere.
  
  Let $(\Sigma, S^p, \Dm^a)$ be a spherically closed spherical system,
  and let $(\Mm, \rho')$ be an augmentation corresponding to the
  spherical subgroup $H \subseteq G$.  For $\gamma \in \Sigma$ we
  denote by $\gamma' \in \Mm$ the corresponding unique primitive
  element $\gamma' \in \Mm$ such that $\gamma = k\gamma'$ for some
  positive integer $k$ (in fact, we always have $k \in \{1, 2\}$).
  The elements of the set $\Sigma' \coloneqq \{\gamma' : \gamma \in
  \Sigma\}$ are called the \emph{spherical roots}.

  The spherical subgroup $H \subseteq G$ is called \emph{wonderful} if
  $\Mm = \lspan_{\Z} \Sigma'$. If one is interested in wonderful
  subgroups, instead of considering the augmented spherically closed
  spherical system with the additional condition $\Mm = \lspan_{\Z}
  \Sigma'$, it can be more convenient to just consider the triple
  $(\Sigma', S^p, \Dm^a)$, which is called a \emph{spherical
    system}. Then the list of spherically closed spherical roots has
  to be replaced with a longer list of spherical roots.

  The full list of spherical roots can be found in
  \cite[Table~30.2]{ti}.  The list in \cite[Table~2]{f4} only includes
  those spherical roots which can appear when $G$ is adjoint, but is
  still strictly longer than the list of spherically closed spherical
  roots. In that case, the small numbers are required to distinguish
  proportional spherical roots.
\end{remark}

\section{Gorenstein spherical Fano varieties}
\label{sec:gsfv}

A complete complex algebraic variety is called Gorenstein Fano if it
is normal and its anticanonical divisor is Cartier and ample. For
toric varieties one has a description of Gorenstein Fano varieties in
terms of convex geometry, namely one has a bijective correspondence
between Gorenstein toric Fano varieties and reflexive polytopes, \ie
lattice polytopes whose dual is a lattice polytope as well (see
\cite[Theorem 4.1.9]{Bat:DualPolyhedra}). Generalizing the notion of a
reflexive polytope, Pasquier established a similar correspondence for
horospherical varieties in terms of so-called $G/H$-reflexive
polytopes (see~\cite{Pasquier:FanoHorospherical}).  This
correspondence has recently been generalized to (arbitrary) Gorenstein
spherical Fano varieties (see~\cite{gsfv}).

In this section, we fix a spherical homogeneous space $G/H$.

\begin{definition}[{\cite[Definition~1.9]{gsfv}}]
  \label{def:qghrefl}
  A polytope $Q \subseteq \Nm_\Q$ is called \emph{$G/H$-reflexive} if
  the following conditions are satisfied:
  \begin{enumerate}
  \item $\rho( D ) / m_D \in Q$ for every $D \in \Dm$.
  \item $0 \in \topint(Q)$.
  \item Every vertex of $Q$ is contained in $\{ \rho( D ) / m_D : D
    \in \Dm \}$ or $\Nm \cap \Vm$.
  \item Consider the dual polytope
    \begin{align*}
      Q^* \coloneqq \bigcap_{u \in Q} \{ v \in \Mm_\Q : \langle u, v
      \rangle \ge -1 \}.
    \end{align*}
    A vertex $v \in Q^*$ such that $Q^* \cap ( v + \cone(\Sigma) ) =
    \{ v \}$ is called \emph{supported}
    (here, $\Sigma$ denotes the set of spherically closed spherical roots
    associated to $G/H$ according to Proposition~\ref{prop:clc}).
    Every supported vertex of
    $Q^*$ lies in the lattice $\Mm$.
  \end{enumerate}
  The set of supported vertices of $Q^*$ is denoted by
  $V_{\Supp}(Q^*)$.
\end{definition}

\begin{theorem}[{\cite[Theorem~1.10]{gsfv}}]
  \label{thm:bijection-Fano-poly}
  The assignment $X \mapsto \conv( \rho(D) / m_D : D \in \Delta)$
  induces a bijection between isomorphism classes of Gorenstein
  spherical Fano embeddings $G/H \hookrightarrow X$ and
  $G/H$-reflexive polytopes.
\end{theorem}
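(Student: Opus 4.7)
The plan is to construct mutually inverse maps via Luna-Vust theory, which already establishes a bijection between complete spherical embeddings $G/H \hookrightarrow X$ and complete colored fans $\mathbb{F}_X$ in $\Nm_\Q$. Under this correspondence, closed $G$-orbits $Y$ of $X$ correspond to maximal colored cones $(\mathcal{C}_Y, \mathcal{F}_Y)$, where $\mathcal{C}_Y$ is generated by the elements $\rho'(D)$ for $D$ in the color set $\mathcal{F}_Y \subseteq \Dm$ together with the primitive generators in $\Vm$ coming from the $G$-invariant prime divisors containing $Y$. Given a Gorenstein spherical Fano embedding $X$, Brion's formula $-K_X = \sum_{D \in \adiv} m_D D$ enables the definition of $Q_X \coloneqq \conv(\rho'(D)/m_D : D \in \adiv)$, and the goal is to show that $Q_X$ is $G/H$-reflexive and that the assignment $X \mapsto Q_X$ is bijective.

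To verify the four conditions of Definition~\ref{def:qghrefl}, condition (1) is immediate by construction, and condition (3) follows from the fact that each ray of $\mathbb{F}_X$ is generated either by some $\rho'(D)$ for $D \in \Dm$ or by a primitive element of $\Nm \cap \Vm$ (corresponding to a $G$-invariant prime divisor, for which $m_D = 1$). Condition (2) follows by combining completeness of $X$ with the Fano hypothesis: the ample divisor $-K_X$ corresponds to a strictly convex piecewise-linear support function $\psi$ on $\mathbb{F}_X$, which together with completeness forces $0 \in \topint(Q_X)$. The crux is condition (4): the Cartier and Fano hypotheses guarantee that on each maximal colored cone $(\mathcal{C}_Y, \mathcal{F}_Y)$ the restriction $\psi|_{\mathcal{C}_Y}$ is given by a single lattice element $v_Y \in \Mm$ satisfying $\langle v_Y, \rho'(D) \rangle = -m_D$ for $D \in \mathcal{F}_Y$ (and the corresponding normalization on the $\Vm$-generators), and this $v_Y$ is precisely the vertex of $Q_X^*$ dual to the facet of $Q_X$ spanned by the relevant points $\rho'(D)/m_D$ and $\Vm$-generators. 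The identity $\Vm = -\cone(\Sigma)^\vee \cap \Nm_\Q$ for the valuation cone then translates the support condition $Q_X^* \cap (v_Y + \cone(\Sigma)) = \{v_Y\}$ precisely into maximality of $\mathcal{C}_Y$ in $\mathbb{F}_X$, yielding the bijection between supported vertices of $Q_X^*$ and closed $G$-orbits of $X$ as well as the lattice containment $v_Y \in \Mm$.

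For the inverse direction, given a $G/H$-reflexive polytope $Q$, I would build a complete colored fan by taking, for each facet $F$ of $Q$, the cone generated by $F$ in $\Nm_\Q$ together with the color set $\{D \in \Dm : \rho'(D)/m_D \in F\}$; Luna-Vust theory then produces a complete spherical embedding $X_Q$, and the piecewise-linear function taking the value $1$ on each vertex of $Q$ defines an ample Cartier anticanonical divisor, making $X_Q$ Gorenstein Fano. Bijectivity $X \cong X_{Q_X}$ and $Q = Q_{X_Q}$ then follows by comparing the combinatorial data on both sides. The main obstacle throughout is the careful handling of condition (4): matching supported vertices of the dual polytope with closed $G$-orbits via the valuation cone, and ensuring that the interplay between the color set $\mathcal{F}_Y$ and the $\Vm$-portion of $\mathcal{C}_Y$ remains consistent across all maximal colored cones, so that the resulting fan satisfies the axioms of a colored fan in the sense of Luna-Vust.
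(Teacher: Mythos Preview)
The paper does not prove this theorem: it is quoted verbatim from \cite[Theorem~1.10]{gsfv} and no argument is given here, so there is nothing to compare your proposal against in this paper itself.

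That said, your outline is broadly the right strategy and is in the spirit of the proof in \cite{gsfv}, but your inverse construction has a gap. You propose to build the colored fan by taking, for \emph{each} facet $F$ of $Q$, the cone over $F$ together with the obvious color set. This is incorrect in general: a colored cone $(\Cm,\Fm)$ in the Luna--Vust sense must satisfy $\Cm^\circ \cap \Vm \ne \emptyset$, and facets of $Q$ dual to \emph{non-supported} vertices of $Q^*$ typically fail this condition. The correct construction uses only the facets $\widehat{v}$ for $v \in V_{\Supp}(Q^*)$ to produce the maximal colored cones; the supported-vertex condition $Q^* \cap (v + \cone(\Sigma)) = \{v\}$ is precisely what guarantees $\cone(\widehat{v})^\circ \cap \Vm \ne \emptyset$. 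Without this restriction your fan would not be a colored fan, and in any case completeness of a spherical colored fan means that its support equals $\Vm$, not all of $\Nm_\Q$, so there is no reason every facet of $Q$ should contribute.

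A second, smaller point: in checking that the resulting $X_Q$ has anticanonical divisor $\sum_D m_D D$, you need the vertices of $Q$ lying in $\Vm$ to be primitive in $\Nm$ (so that $m_D = 1$ for the corresponding $G$-invariant divisors). Condition~(3) of Definition~\ref{def:qghrefl} only says these vertices lie in $\Nm \cap \Vm$; primitivity has to be argued separately, and in \cite{gsfv} this is handled via the lattice condition~(4) on the supported vertices of $Q^*$.
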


\begin{remark}
In the statement of Theorem~\ref{thm:bijection-Fano-poly}, recall that
$\Delta$ denotes the set of all $B$-invariant prime divisors
in $X$ and that we have a natural inclusion $\Dm \subseteq \Delta$ (by identifying
the colors with their closures). For $D \in \Delta \setminus \Dm$,
we have $m_D = 1$ and $\rho(D)/m_D = \rho(D) \in \Nm \cap \Vm$, so that
property (3) of Definition~\ref{def:qghrefl} is satisfied.
\end{remark}

\begin{example}
  \label{ex:ghrp}
  Let $G \coloneqq \SL(2) \times \C^*$. We fix a maximal torus
  contained in some Borel subgroup and denote by $\alpha$ the unique
  simple root of $\SL(2)$.  Moreover, we denote by $\varepsilon$ a
  primitive character of $\C^*$.  We define the spherically closed
  spherical system $\Ss \coloneqq (\Sigma, S^p, \Dm^a)$ with $\Sigma
  \coloneqq \{\alpha\}$, $S^p\coloneqq\emptyset$, $\Dm^a \coloneqq
  \{D_1, D_2\}$, and $\rho(D_1) \coloneqq \rho(D_2) \coloneqq
  \tfrac{1}{2} \alpha^\vee|_{\Lambda}$.  Let $\Mm$ be the lattice
  spanned by $b_1 \coloneqq \frac{1}{2}\alpha + \varepsilon$ and $b_2
  \coloneqq \frac{1}{2}\alpha - \varepsilon$. Then $(b_1, b_2)$ is a
  basis of $\Mm$, and we denote by $(b_1^*, b_2^*)$ the corresponding
  dual basis of the dual lattice $\Nm$.  We set $\rho'(D_1) \coloneqq
  b_1^*$ and $\rho'(D_2) \coloneqq b_2^*$.  Then $(\Mm, \rho')$ is an
  augmentation of $\Ss$, and we denote by $H \subseteq G$ the
  corresponding spherical subgroup.  The polytope $Q \coloneqq
  \conv(b_1^*, b_2^*, -b_1^*+b_2^*, -b_2^*) \subseteq \Nm_\Q$ is
  $G/H$-reflexive.  Its dual polytope is $Q^* = \conv(2b_1+b_2,
  -b_1+b_2, -b_1-b_2, -b_2) \subseteq \Mm_\Q$, and we have
  $V_{\Supp}(Q^*) = \{2b_1+b_2, -b_1+b_2\}$.  The polytopes are
  illustrated in Figure~\ref{fig:ghrp}.  The grey area is the
  valuation cone, the dashed arrows are $\rho'(D_1)$ and $\rho'(D_2)$,
  and the dotted arrows are translates of the spherically closed
  spherical root $\alpha = b_1 + b_2$ showing that exactly the circled
  vertices of $Q^*$ are supported. We denote by $G/H \hookrightarrow
  X$ the Gorenstein spherical Fano embedding associated to the
  $G/H$-reflexive polytope $Q$. The variety $X$ is isomorphic to
  $\Pd^1 \times \Pd^2$.
  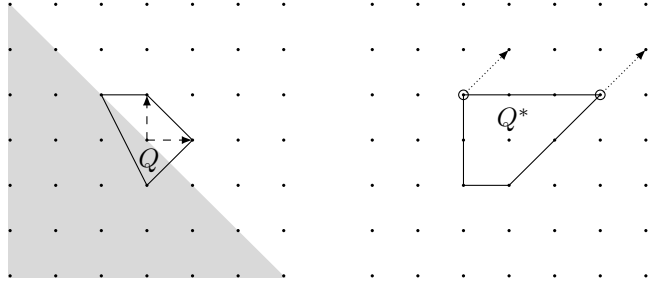
\begin{figure}[ht!]
    \begin{tikzpicture}[scale=0.6]
      \clip (-3.04, -3.04) -- (3.04, -3.04) -- (3.04, 3.04) -- (-3.04, 3.04) -- cycle;
      \fill[color=gray!30] (5, -5) -- (-5, -5) -- (-5, 5) -- cycle;
      \draw[draw=black] ( 0, -1 ) -- ( 1, 0 ) -- ( 0, 1 ) -- ( -1, 1 ) -- cycle;
      \foreach \x in {-4,...,4} \foreach \y in {-4,...,4} \fill (\x, \y) circle (1pt);
      \node at (0.05,-0.45) {$Q$};
      \draw[dashed,-latex] (0, 0) -- (1, 0);
      \draw[dashed,-latex] (0, 0) -- (0, 1);
    \end{tikzpicture}\hspace*{1cm}
    \begin{tikzpicture}[scale=0.6]
      \clip (-3.04, -3.04) -- (3.04, -3.04) -- (3.04, 3.04) -- (-3.04, 3.04) -- cycle;
      \draw[draw=black] ( 2, 1 ) -- ( -1, 1 ) -- ( -1, -1 ) -- ( 0, -1 ) -- cycle;
      \foreach \x in {-4,...,4} \foreach \y in {-4,...,4} \fill (\x, \y) circle (1pt);
      \draw (-1, 1) circle (3pt);
      \draw (2, 1) circle (3pt);
      \draw[densely dotted,-latex] (-1, 1) -- (0, 2);
      \draw[densely dotted,-latex] (2, 1) -- (3, 2);
      \node at (0.1,0.45) {$Q^*$};
    \end{tikzpicture}
    \caption{Illustration to Example~\ref{ex:ghrp}.}
    \label{fig:ghrp}
  \end{figure}
\end{example}

Our next aim is to determine the pseudo-index of a Gorenstein
spherical Fano variety $X$ with associated $G/H$-reflexive polytope
$Q$ combinatorially.  This can be done by translating
\cite[3.2]{sphmori} into the setting of $G/H$-reflexive polytopes. As
this works exactly as in the horospherical case (see~\cite[after
Lemma~5]{pas10}), we will be brief.  We write $u_D \coloneqq
\rho'(D)/m_D$ for every $D \in \adiv$.  For every $v \in
V_{\Supp}(Q^*)$ and $D \in \bdiv$ with $u_D \notin \widehat{v}$ 
(where $\widehat{v}$ is the
dual face to $v$, see the list of general notation) there
exists a rational curve $\cur_{D, v}$ with $(-K_X \cdot \cur_{D, v}) =
m_D + \langle \rho'(D), v \rangle$.  Moreover, for every edge $\ell$
of $Q^*$ connecting two supported vertices $v, w \in V_{\Supp}(Q^*)$
there exists a rational curve $\cur_{\ell}$.  Let $\chi_\ell \in \Mm$
be the uniquely determined primitive element such that $v-w$ is a
positive multiple of $\chi_\ell$.  Then we have $v-w = (-K_X \cdot
\cur_\ell) \cdot \chi_\ell$.  The families of curves $\cur_{D, v}$ and
$\cur_\ell$ generate the cone of effective one-cycles on $X$.

\begin{example}
  \label{ex:ix}
  We consider the Gorenstein spherical Fano variety $X$ of
  Example~\ref{ex:ghrp}.  Recall that we have $\rho'(D_1) = b_1^*$,
  $\rho'(D_2) = b_2^*$, and $V_{\Supp}(Q^*) = \{2b_1+b_2, -b_1+b_2\}$.
  We determine the pseudo-index $\iota_X$. We have
  \begin{align*}
    (-K_X \cdot \cur_{D_1,2b_1+b_2}) &= m_{D_1} + \langle \rho'(D_1), 2b_1+b_2\rangle = 1 + 2 = 3\text{,}\\
    (-K_X \cdot \cur_{D_2,2b_1+b_2}) &= m_{D_2} + \langle \rho'(D_2), 2b_1+b_2\rangle = 1 + 1 = 2\text{,}\\
    (-K_X \cdot \cur_{D_2,-b_1+b_2}) &= m_{D_2} + \langle \rho'(D_2),
    -b_1+b_2\rangle = 1 + 1 = 2\text{.}
  \end{align*}
  Note that $\cur_{D_1,-b_1+b_2}$ does not exist as $u_{D_1}$ lies in
  the dual face to $-b_1+b_2$.  Moreover, there exists exactly one
  edge $\ell$ of $Q^*$ connecting two supported vertices, and we
  have \begin{align*}(-K_X \cdot \cur_{\ell}) = 3\text{.}\end{align*}
  We conclude $\iota_X = \min\{2,3\} = 2$. As we have $\dim X = 3$ and
  $\rho_X = |\adiv| - \rank X= 2$, we see that
  Conjecture~\ref{conj:bcdd} holds for $X$.

  We can also determine $\is{X}$ and verify
  Conjecture~\ref{conj:esgp}.  We denote by $D_3$ and $D_4$ the
  $G$-invariant prime divisors in $X$ with $\rho'(D_3) = -b_1^*+b_2^*$
  and $\rho'(D_4) = -b_2^*$.  The intersection $Q^* \cap
  \cone(\Sigma)$ is the line segment connecting $0$ and $b_1+b_2$, and
  the supremum in Definition~\ref{def:is} is achieved for $\sv
  \coloneqq b_1+b_2$.  Hence we have
  \begin{align*}
    \is{X} = \langle \rho'(D_1), \sv \rangle + \langle \rho'(D_2), \sv
    \rangle + \langle \rho'(D_3), \sv \rangle + \langle \rho'(D_4),
    \sv \rangle = 1+1+0-1 = 1\text{.}
  \end{align*}
  Note that $\dim X - \rank X = 1$ and $X \cong \Pd^1 \times \Pd^2$ is
  isomorphic to a toric variety, therefore Conjecture~\ref{conj:esgp}
  holds for $X$.
\end{example}

We conclude this section with an observation, which will not be used
further.

\begin{prop}
  \label{prop:vert-col}
  Let $D \in \Dm$ with $u_D \notin \Vm$. Then $u_D$ is a vertex of
  $Q$.
\end{prop}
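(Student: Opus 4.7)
The plan is to exhibit, for each such $D$, an element $\gamma^{*} \in \Sigma$ serving as a supporting linear functional for $u_D$ on $Q$: either $u_D$ will be the unique maximizer of $\langle \cdot, \gamma^{*}\rangle$ on $Q$, or the maximal face of $Q$ cut out by $\gamma^{*}$ will be a (possibly degenerate) segment having $u_D$ as an endpoint. In both situations $u_D$ is an extreme point of $Q$, hence a vertex. The starting observation is that $\Vm = \{u \in \Nm_\Q : \langle u, \gamma\rangle \le 0 \text{ for every } \gamma \in \Sigma\}$, so the hypothesis $u_D \notin \Vm$ immediately furnishes some $\gamma \in \Sigma$ with $\langle \rho'(D), \gamma\rangle > 0$.

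I would next split into cases according to the classification of colors from the subsection ``The full set of colors'': either $D \in \Dm^a$, or $\rho'(D) = \tfrac{1}{2}\alpha^\vee|_\Mm$ for some $\alpha \in S$ with $2\alpha \in \Sigma$, or $\rho'(D) = \alpha^\vee|_\Mm$ for some $\alpha \in S \setminus S^p$ with $\alpha, 2\alpha \notin \Sigma$. The last case is eliminated directly: the rules for drawing a Luna diagram, together with the compatibility axiom $(\operatorname{S})$, force any simple root carrying a ``circle around'' to lie outside $\Supp(\gamma')$ for every $\gamma' \in \Sigma$; combined with the Cartan inequalities $\langle \alpha^\vee, \beta\rangle \le 0$ for distinct simple roots $\alpha, \beta$, this would yield $\langle \alpha^\vee, \gamma'\rangle \le 0$ for every $\gamma' \in \Sigma$, contradicting $u_D \notin \Vm$.

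For $D \in \Dm^a$ I would use the integrality $\langle \rho(D), \gamma\rangle \in \Z$ (which holds because $\rho(D) \in \Lambda^*$ and $\gamma \in \Lambda$) together with the bound $\le 1$ from axiom $(\operatorname{A1})$ to deduce $\langle \rho(D), \gamma\rangle = 1$; the equality clause of $(\operatorname{A1})$ then forces $\gamma = \alpha \in \Sigma \cap S$ with $D \in \Dm^a(\alpha)$, and Luna's formula yields $m_D = 1$. Taking $\gamma^{*} = \alpha$, a case-by-case check on $D' \in \Delta$ (invoking $(\operatorname{A1})$ for $D' \in \Dm^a$, the Cartan signs for the other two color types, and $\rho'(D') \in \Vm$ for the $G$-invariant $D'$) gives $\langle u_{D'}, \alpha\rangle \le 1$ with equality precisely when $D' \in \Dm^a(\alpha) = \{D^{+}_{\alpha}, D^{-}_{\alpha}\}$, so the maximal face equals $\conv(u_{D^{+}_{\alpha}}, u_{D^{-}_{\alpha}})$ and $u_D$ is one of its endpoints. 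In the remaining case $\rho'(D) = \tfrac{1}{2}\alpha^\vee|_\Mm$ with $2\alpha \in \Sigma$, I would take $\gamma^{*} = 2\alpha$, note $\langle u_D, \gamma^{*}\rangle = 2$, and verify by the analogous case analysis that every other $D' \in \Delta$ satisfies $\langle u_{D'}, \gamma^{*}\rangle \le 1 < 2$, whence $u_D$ is the unique maximizer. The main obstacle is precisely this case-by-case bound: it is routine but must carefully invoke $(\operatorname{A1})$, the $m_D$ formulas, the Cartan inequalities, and crucially the ``shortest spherical root on each ray'' property defining $\Sigma$, which prevents $\alpha$ and $2\alpha$ from both lying in $\Sigma$ and thereby rules out the coincidences that would spoil uniqueness of the maximizer.
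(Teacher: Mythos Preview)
Your overall strategy---finding a spherical root $\gamma^*$ that cuts out a face of $Q$ of dimension at most $1$ containing $u_D$---is exactly the paper's approach. However, your elimination of the third case is wrong, and this is a genuine gap rather than a cosmetic issue.

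You claim that a simple root $\alpha$ carrying a ``circle around'' must lie outside $\Supp(\gamma')$ for every $\gamma' \in \Sigma$. This is false: most of the non-dotted circles in Table~\ref{tab:sr} are circles \emph{around} the vertex (type-$b$ colors with $\rho'(D) = \alpha^\vee|_{\Mm}$), and the corresponding simple roots manifestly lie in the support of the spherical root in that row. For a concrete counterexample, take $\gamma = \alpha_1 + \alpha_1' \in \Sigma$ with $\alpha_1, \alpha_1'$ orthogonal simple roots (the third row of Table~\ref{tab:sr}). Neither $\alpha_1$ nor $2\alpha_1$ lies in $\Sigma$, so the color $D$ moved by $P_{\alpha_1}$ falls squarely into your third case with $\rho'(D) = \alpha_1^\vee|_{\Mm}$; yet $\langle \alpha_1^\vee, \gamma\rangle = 2 > 0$, so $u_D \notin \Vm$. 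The same happens for $\gamma = \alpha_1 + \dots + \alpha_n$ of type $A_n$ with $n \ge 2$, where the endpoint colors are of type $b$ and satisfy $\langle \alpha_1^\vee, \gamma\rangle = 1$.

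In fact, the third case is the generic one. The paper's proof simply records that the colors with $u_D \notin \Vm$ are precisely those appearing as non-dotted circles in Table~\ref{tab:sr}, and then reads off from the table that for each such $D$ there is a $\gamma \in \Sigma$ with $\langle u_D, \gamma\rangle > 0$, that at most one other color $D'$ also has $\langle u_{D'}, \gamma\rangle > 0$, and that when this happens the two values coincide; this yields the supporting hyperplane with face $\conv(u_D, u_{D'})$. Your arguments for the first two cases are fine and can be extended to cover the third, but doing so amounts to the same row-by-row reading of Table~\ref{tab:sr} that the paper performs; the case cannot be dismissed.
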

\begin{proof}
  The colors not contained in $\Vm$ are exactly the colors
  appearing in Table~\ref{tab:sr} (corresponding to non-dotted circles).
  According to Table~\ref{tab:sr}, there exists a spherically closed
  spherical root $\gamma \in \Sigma$ with $\langle u_D, \gamma \rangle
  > 0$.  Moreover, Table~\ref{tab:sr} also shows that there is at most
  one color $D' \in \Dm$ distinct from $D$ with $\langle u_{D'},
  \gamma \rangle > 0$ (otherwise set $D' \coloneqq D$) and that we
  then have $r \coloneqq \langle u_D, \gamma \rangle = \langle u_{D'},
  \gamma \rangle$.  Hence $H_\gamma \coloneqq \{u \in \Nm_\Q : \langle
  u, \gamma \rangle \le r\}$ is a supporting affine hyperplane for $Q$
  with $H_\gamma \cap Q = \conv(u_D, u_{D'})$. It follows that $u_D$
  and $u_{D'}$ are vertices of $Q$.
\end{proof}

\section{The generalized Mukai conjecture}
\label{sec:gener-mukai-conj}

The purpose of this section is to prove Theorem~\ref{th:bcddsph}.  Let
$X$ be a $\Q$-factorial Gorenstein spherical Fano variety, and let $Q$
be the associated $G/H$-reflexive polytope.  We define
\begin{align*}
  \varepsilon_X \coloneqq \min \rleft\{ m_D(1 + \langle u_D, v
  \rangle) : \text{$D \in \adiv$, $v \in V_{\Supp}( Q^* )$, $\langle
    u_D, v \rangle \ne -1$}\rright\}\text{.}
\end{align*}

\begin{remark}
  Our proof generalizes (and slightly simplifies) the approach of
  Casagrande (resp.~of Pasquier) for toric varieties (resp.~for
  horospherical varieties).  In the case of a horospherical variety
  $X$, our definition of $\varepsilon_X$ yields a value smaller than
  or equal to the corresponding definition of Pasquier.  Moreover, we
  are able to avoid an argument like \cite[Lemma~4]{cas06} or
  \cite[Lemma~5]{pas10}.
\end{remark}

\begin{prop}
  \label{p:ieq}
  We have $\iota_X \le \varepsilon_X$.
\end{prop}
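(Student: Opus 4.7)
The plan is to prove that $\iota_X \le m_D(1 + \langle u_D, v\rangle)$ for every admissible triple $(D,v)$ appearing in the minimum defining $\varepsilon_X$, by producing a rational curve on $X$ of that anticanonical degree; taking the infimum then delivers $\iota_X \le \varepsilon_X$. Using $u_D = \rho'(D)/m_D$, the target reads $\iota_X \le m_D + \langle \rho'(D), v\rangle$.

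The case $D \in \bdiv$ is immediate: the curve $\cur_{D,v}$ recalled just before the proposition has $(-K_X \cdot \cur_{D,v}) = m_D + \langle \rho'(D), v\rangle$, yielding the bound at once. For $D \in \adiv \setminus \bdiv$ one has $m_D = 1$ and $u_D = \rho(D) \in \Nm \cap \Vm$, and the required estimate becomes $\iota_X \le 1 + \langle u_D, v\rangle$; here one produces an edge curve instead. Since $X$ is $\Q$-factorial, the facet $\widehat{v} \subseteq Q$ is a simplex, and $\langle u_D, v\rangle > -1$ places $u_D$ outside $\widehat{v}$; the facet of $\widehat{v}$ separating $u_D$ from the opposite vertex is dual to an edge $\ell$ of $Q^*$ joining $v$ to a neighboring supported vertex $w$ with $\langle u_D, w\rangle = -1$. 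Pairing the identity $v - w = (-K_X \cdot \cur_\ell)\cdot \chi_\ell$ with $u_D$ yields
\begin{align*}
(-K_X \cdot \cur_\ell)\cdot \langle u_D, \chi_\ell\rangle \;=\; 1 + \langle u_D, v\rangle\text{,}
\end{align*}
and as $\chi_\ell \in \Mm$ is primitive while $\langle u_D, v-w\rangle > 0$, the pairing $\langle u_D, \chi_\ell\rangle$ is a positive integer, so $(-K_X \cdot \cur_\ell) \le 1 + \langle u_D, v\rangle$, as required.

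The main obstacle is the second case: one has to verify that the neighbor $w$ reached across the separating facet indeed belongs to $V_{\Supp}(Q^*)$, so that the edge curve $\cur_\ell$ is actually among the effective generators recalled in Section~\ref{sec:gsfv}. This combinatorial input relies on the supportedness criterion for $G/H$-reflexive polytopes together with $\Q$-factoriality; once it is in place, the displayed identity closes the argument.
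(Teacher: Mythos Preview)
Your overall plan matches the paper's: split into $D \in \bdiv$ (use $\cur_{D,v}$) and $D \in \adiv \setminus \bdiv$ (use an edge curve $\cur_\ell$). The first case is fine. The second case, however, has two problems.

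First, the claim $\langle u_D, w\rangle = -1$ is neither justified nor true in general: the neighbouring supported vertex $w$ need not have $u_D$ on its dual facet $\widehat{w}$. Fortunately you do not need equality; since $u_D \in Q$ and $w \in Q^*$ one always has $\langle u_D, w\rangle \ge -1$, and this weaker bound already gives
\[
(-K_X \cdot \cur_\ell)\cdot \langle u_D, \chi_\ell\rangle \;=\; \langle u_D, v\rangle - \langle u_D, w\rangle \;\le\; 1 + \langle u_D, v\rangle\text{,}
\]
which is what you want. This is exactly how the paper finishes.

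Second, and more seriously, you have not actually established that the vertex $w$ at the other end of your edge $\ell$ is \emph{supported}. Your description of the facet of $\widehat{v}$ as ``separating $u_D$ from the opposite vertex'' is ambiguous (several facets may have that property) and, more to the point, nowhere uses the only piece of information distinguishing the $G$-invariant case from the color case, namely $u_D \in \Vm$. Not every edge of $Q^*$ emanating from $v$ ends at a supported vertex, so an arbitrary choice of facet will not work. The paper resolves this precisely via the valuation cone: one chooses a point $u' \in \cone(\widehat{v})^\circ \cap \Vm^\circ$ (which exists because $X$ is complete) and walks along the segment from $u'$ to $u_D$; since this segment lies in $\Vm^\circ$ except possibly at $u_D$, the facet $\mu$ of $\cone(\widehat{v})$ it crosses is not contained in $\partial\Vm$. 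By completeness of $X$ the maximal cones $\cone(\widehat{v})$ for $v \in V_{\Supp}(Q^*)$ cover $\Vm$, so such an interior facet $\mu$ must be shared with $\cone(\widehat{w})$ for some supported $w$, and then $v$, $w$ are joined by an edge $\ell$ of $Q^*$ with $\langle u_D, \chi_\ell\rangle > 0$. Your sketch gestures at ``the supportedness criterion together with $\Q$-factoriality'' but supplies no mechanism linking your chosen facet to the valuation cone; without that link the argument does not close.
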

\begin{proof}
  Let $v \in V_{\Supp}(Q^*)$ and $D \in \adiv$ with $\langle u_D, v
  \rangle \ne -1$, \ie $u_D \notin \widehat{v}$.  We first consider
  the case $D \in \Dm$. Then we have
  \begin{align*}
    (-K_X\cdot \cur_{D,v}) = m_D + \langle \rho'(D), v \rangle =
    m_D(1+\langle u_D, v\rangle)\text{.}
  \end{align*}

  Now consider the case $D \in \adiv \setminus \bdiv$.  Then we have
  $u_D \in \Vm$. As $\cone(\widehat{v})^\circ \cap \Vm \ne \emptyset$
  and $\cone(\widehat{v})$ is full-dimensional in $\Nm_\Q$ (as is the
  valuation cone $\Vm$), there exists $u' \in \cone(\widehat{v})^\circ
  \cap \Vm^\circ$. The line segment connecting $u'$ and $u_D$ lies in
  $\Vm^\circ$, except for possibly the point $u_D$. As $u_D \notin
  \widehat{v}$, the intersection of the line segment with the boundary
  of $\cone(\widehat{v})$ lies in a facet $\mu$ of
  $\cone(\widehat{v})$ which is not contained in the boundary of
  $\Vm$.  It follows that $\mu$ is a facet of exactly one other cone
  $\cone(\widehat{w})$ for some $w \in V_{\Supp}(Q^*)$ and the
  supported vertices $v$ and $w$ are joined by an edge $\ell$ of
  $Q^*$.  Let $\chi_\ell \in \Mm$ be the uniquely determined primitive
  element such that $v-w = (-K_X \cdot \cur_\ell) \cdot \chi_\ell$.
  Then we have $\langle u_D, \chi_\ell \rangle > 0$, hence $\langle
  \rho'(D), \chi_\ell\rangle$ is a positive integer, and
  \begin{align*}
    (-K_X \cdot \cur_{\ell}) 
      & = \big(\langle \rho'(D), -w\rangle + \langle \rho'(D), v\rangle\big)/\langle\rho'(D), \chi_\ell\rangle\\
      & \le \langle \rho'(D), -w\rangle + \langle \rho'(D), v\rangle\\
      & = m_D(\langle u_D, -w \rangle + \langle u_D, v \rangle) \\
      & \le m_D(1 + \langle u_D, v\rangle)\text{.}
  \end{align*}
\end{proof}

\begin{prop}
  \label{p:rfs}
  Let $\sv \in \conv(V_{\Supp}(Q^*))$.  Then we have
  \begin{align*}
    \varepsilon_X (|\adiv| - \rank X) \le \sum_{D \in \adiv} \big(m_D
    + \langle \rho'(D), \sv \rangle\big) \text{.}
  \end{align*}
\end{prop}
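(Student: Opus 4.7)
The plan is to reduce the inequality to the case $\sv \in V_{\Supp}(Q^*)$ by convexity, and then exploit $\Q$-factoriality to bound the number of $D \in \adiv$ with $u_D \in \widehat{v}$.

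First, write $\sv = \sum_i \lambda_i v_i$ with $\lambda_i \ge 0$, $\sum_i \lambda_i = 1$, and $v_i \in V_{\Supp}(Q^*)$. Since the function $\sv \mapsto \sum_{D \in \adiv}\bigl(m_D + \langle \rho'(D), \sv\rangle\bigr)$ is affine, it suffices to establish
\begin{align*}
\varepsilon_X(|\adiv| - \rank X) \le \sum_{D \in \adiv} \bigl( m_D + \langle \rho'(D), v\rangle \bigr)
\end{align*}
for every single $v \in V_{\Supp}(Q^*)$; the statement of the proposition then follows by taking the convex combination with weights $\lambda_i$.

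Next, I would fix $v \in V_{\Supp}(Q^*)$ and partition $\adiv = A_v \sqcup B_v$, where $A_v \coloneqq \{D \in \adiv : u_D \in \widehat{v}\}$ and $B_v$ is its complement. For $D \in A_v$ one has $\langle u_D, v\rangle = -1$, so $m_D + \langle \rho'(D), v\rangle = 0$. For $D \in B_v$, since $u_D \in Q$ forces $\langle u_D, v\rangle \ge -1$ and we have $\langle u_D, v \rangle \neq -1$, the definition of $\varepsilon_X$ directly gives $m_D + \langle \rho'(D), v\rangle = m_D(1 + \langle u_D, v\rangle) \ge \varepsilon_X$. Hence
\begin{align*}
\sum_{D \in \adiv}\bigl(m_D + \langle \rho'(D), v\rangle\bigr) \ge \varepsilon_X \cdot |B_v| = \varepsilon_X\bigl(|\adiv| - |A_v|\bigr).
\end{align*}

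It remains to show $|A_v| \le \rank X$. This is the main (but essentially combinatorial) obstacle: I expect it to follow from the $\Q$-factoriality hypothesis on $X$, translated into the $G/H$-reflexive polytope $Q$. Concretely, $v$ corresponds to a closed $G$-orbit whose colored cone has support $\cone(\widehat{v}) \subseteq \Nm_\Q$, a full-dimensional cone of dimension $\rank X$; $\Q$-factoriality is equivalent to this colored cone being simplicial in the Luna--Vust sense, meaning that the vectors $\rho'(D)$ for $D \in A_v$ (comprising the colors in $\Fm_v$ together with the $G$-invariant divisors mapping to rays of $\cone(\widehat{v})$) are linearly independent. Consequently $|A_v| \le \dim \cone(\widehat{v}) = \rank X$, which finishes the proof.
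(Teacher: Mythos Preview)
Your proof is correct and follows essentially the same approach as the paper: both hinge on the bound $|A_v| \le \rank X$ coming from $\Q$-factoriality and on the defining inequality $m_D(1+\langle u_D,v\rangle) \ge \varepsilon_X$ for $D \notin A_v$. The only difference is organizational---you reduce to a single supported vertex $v$ by affine linearity at the outset, whereas the paper keeps the convex combination $\sv = \sum_v b_v v$ throughout and arrives at the same bound via the double sum $\sum_{D}\sum_{v \in A(D)} b_v = \sum_v b_v\,|A_v| \le \rank X$.
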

\begin{proof}
  There exist rational numbers $0 \le b_v \le 1$ for $v \in
  V_{\Supp}(Q^*)$ such that
  \begin{align*}
    \sum_{v \in V_{\Supp}(Q^*)} b_v = 1\text{,} && \sum_{v \in
      V_{\Supp}(Q^*)} b_v v = \sv\text{.}
  \end{align*}
  For every $D \in \adiv$ we define
  \begin{align*}
    A(D) \coloneqq \{v \in V_{\Supp}(Q^*) : \langle u_D, v\rangle =
    -1\}\text{.}
  \end{align*}
  Then we have
  \begin{align*}
    \langle u_D, \sv \rangle &= \sum_{v \in V_{\Supp}(Q^*)} b_v\langle u_D, v \rangle \\
    &= - \sum_{v \in A(D)} b_v + \sum_{v \notin A(D)} b_v \langle u_D,v\rangle \\
    &\ge -\sum_{v \in A(D)} b_v + \sum_{v \notin A(D)}
    b_v\rleft(\frac{\varepsilon_X}{m_D}-1\rright) \\
    &= \rleft(\frac{\varepsilon_X}{m_D}-1\rright) -
    \frac{\varepsilon_X}{m_D}\sum_{v \in A(D)} b_v \text{,}
  \end{align*}
  hence
  \begin{align*}
    \frac{\varepsilon_X - m_D}{\varepsilon_X} \le \sum_{v \in A(D)}
    b_v + \frac{m_D}{\varepsilon_X}\langle u_D, \sv \rangle\text{.}
  \end{align*}
  We sum the above inequality over all $D \in \adiv$ and obtain
  \begin{align*}
    |\adiv| - \frac{1}{\varepsilon_X}\sum_{D \in \adiv} m_D &\le
    \sum_{D \in \adiv}\sum_{v \in A(D)}
    b_v + \sum_{D \in \adiv} \frac{m_D}{\varepsilon_X} \langle u_D, \sv \rangle \\
    &= \rank X + \sum_{D \in \adiv} \frac{m_D}{\varepsilon_X} \langle
    u_D, \sv \rangle\text{,}
  \end{align*}
  where the last equality follows from the fact that $X$ is
  $\Q$-factorial.
  Indeed, the usual criterion for $\Q$-factoriality of a spherical
  variety (see, for instance, \cite[Theorem~3.2.14]{per14}) may be
  straightforwardly translated into the following condition:
  Every facet $\widehat{v}$ of $Q$
  for $v \in V_{\Supp}(Q^*)$ has exactly $\rank X$ vertices in $V(Q)$,
  where such a vertex can not be equal to $\rho'(D)$ for more than one
  $D \in \adiv$.
\end{proof}

\begin{cor}
  Let $\sv \in \conv(V_{\Supp}(Q^*))$.  Then we have
  \begin{align*}
    \rho_X(\iota_X-1) \le \sum_{D \in \adiv} \big(m_D - 1 + \langle
    \rho'(D), \sv \rangle\big) + \rank X\text{.}
  \end{align*}
  In particular, Proposition~\ref{prop:mineq} holds.
\end{cor}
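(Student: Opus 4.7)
The plan is to combine Propositions~\ref{p:ieq} and~\ref{p:rfs} with the standard formula $\rho_X = |\adiv| - \rank X$ for the Picard number of a $\Q$-factorial spherical variety (which is already implicit in the $\Q$-factoriality condition used in the proof of Proposition~\ref{p:rfs}, where every facet $\widehat{v}$ of $Q$ for $v \in V_{\Supp}(Q^*)$ has exactly $\rank X$ vertices coming from distinct elements of $\adiv$).

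First I would rewrite Proposition~\ref{p:rfs} by subtracting $|\adiv|$ from both sides:
\begin{align*}
  (\varepsilon_X - 1)(|\adiv| - \rank X) \le \sum_{D \in \adiv} \big(m_D - 1 + \langle \rho'(D), \sv \rangle\big) + \rank X - \varepsilon_X \rank X,
\end{align*}
which, after moving the $\rank X$ terms, gives
\begin{align*}
  (\varepsilon_X - 1) \rho_X \le \sum_{D \in \adiv} \big(m_D - 1 + \langle \rho'(D), \sv \rangle\big) + \rank X.
\end{align*}
Since $\iota_X \le \varepsilon_X$ by Proposition~\ref{p:ieq}, and $\rho_X \ge 0$, we have $(\iota_X - 1) \rho_X \le (\varepsilon_X - 1) \rho_X$, so the desired inequality follows immediately. (Note: the $\rho_X \ge 0$ monotonicity only needs $\iota_X \ge 1$, which is automatic since $-K_X$ is ample and rational curves exist on any Fano variety; alternatively one can observe that $\varepsilon_X \ge 1$ from $m_D \ge 1$ and $\langle u_D, v\rangle \ge 0$ whenever $\langle u_D, v\rangle \ne -1$ and $v$ is supported, although this latter claim is not needed for the argument.)

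For the second assertion, Proposition~\ref{prop:mineq} is obtained by inserting the hypothesis $\sum_{D \in \adiv}(m_D - 1 + \langle \rho'(D), \sv\rangle) \le \dim X - \rank X$ into the just-proven inequality, yielding
\begin{align*}
  \rho_X(\iota_X - 1) \le (\dim X - \rank X) + \rank X = \dim X.
\end{align*}
I do not anticipate any obstacle: the entire argument is an algebraic rearrangement of the two propositions, the only subtle point being the careful use of the identity $|\adiv| = \rho_X + \rank X$, which has already been invoked inside the proof of Proposition~\ref{p:rfs} and is therefore available here without further comment.
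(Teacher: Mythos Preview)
Your approach is exactly that of the paper: combine the identity $\rho_X = |\adiv| - \rank X$ with Propositions~\ref{p:ieq} and~\ref{p:rfs} and rearrange. Note, however, that your first displayed inequality contains an algebraic slip (the extra $-\varepsilon_X\rank X$ on the right should not be there; subtracting $\rho_X = |\adiv|-\rank X$, rather than $|\adiv|$, from both sides of Proposition~\ref{p:rfs} gives your second displayed line directly), though your final conclusion and the deduction of Proposition~\ref{prop:mineq} are correct.
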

\begin{proof}
  Recall that we have $\rho_X = |\adiv|-\rank X$.  We obtain
  \begin{align*}
    \rho_X(\iota_X-1) &\le \rho_X(\varepsilon_X-1)\\
    &= ( |\adiv| - \rank X ) \varepsilon_X
    - |\adiv|+\rank X\\
    &\le \sum_{D \in \adiv} \big(m_D
    + \langle \rho'(D), \sv \rangle\big) - |\adiv|+\rank X\\
    & = \sum_{D \in \adiv} \big(m_D - 1 + \langle \rho'(D), \sv \rangle\big) + \rank X\text{,}
  \end{align*}
  where the two inequalities follow from Proposition~\ref{p:ieq} and
  Proposition~\ref{p:rfs}.
\end{proof}

\begin{proof}[Proof of Theorem~\ref{th:bcddsph}]
  According to \cite[Lemma~13.3]{gsfv}, there exists an element $\sv
  \in V_{\Supp}(Q^*) \cap \cone(\Sigma)$.  Then
  Definition~\ref{def:is} and Conjecture~\ref{conj:esgp} imply
  \begin{align*}
    \sum_{D \in \adiv}\big(m_D - 1 + \langle \rho'(D), \sv\rangle\big)
    \le \is{X} \le \dim X - \rank X\text{,}
  \end{align*}
  so that Proposition~\ref{prop:mineq} implies $\rho_X(\iota_X-1) \le
  \dim X$.

  Now assume $\rho_X(\iota_X-1) = \dim X$. Then we have $\is{X} = \dim
  X - \rank X$, and according to Conjecture~\ref{conj:esgp} the
  variety $X$ is isomorphic to a toric variety. Hence we have $X \cong
  (\Pd^{\iota_X-1})^{\rho_X} $ by \cite[Theorem~1(ii)]{cas06}.
\end{proof}

\section{Spherical skeletons}
\label{sec:spherical-skeletons}

The aim of this section is to restate Conjecture~\ref{conj:esgp} in
terms of the information captured in the following notion, \ie in a
purely combinatorial way.

\begin{definition}
  A \emph{spherical skeleton} is a quadruple $\Rs \coloneqq (\Sigma,
  S^p, \Dm^a, \gdiv)$ where $(\Sigma, S^p, \Dm^a)$ is a spherically
  closed spherical system and $\gdiv$ is an abstract
  finite set equipped with a
  map $\rho\colon \gdiv \to \Lambda^*$ such that for every $D \in \gdiv$,
  $\gamma \in \Sigma$ we have $\langle \rho(D), \gamma \rangle \le 0$.
  A spherical skeleton determines a finite set $\adiv \coloneqq \bdiv
  \cup \gdiv$ equipped with a map $\rho\colon \adiv \to \Lambda^*$.  A
  spherical skeleton is called \emph{complete} if $\cone(\rho(D) : D
  \in \adiv) = \Lambda^*_\Q$.
\end{definition}

To any spherical variety $X$ we associate a spherical skeleton $\Rs_X
\coloneqq (\Sigma, S^p, \Dm^a, \gdiv)$ in the obvious way, \ie
$(\Sigma, S^p, \Dm^a)$ is the spherically closed system associated to
the open $G$-orbit in $X$ and $\gdiv \coloneqq \adiv \setminus \bdiv$
is the set of $G$-invariant prime divisors in $X$.  If $X$ is
complete, then $\Rs_X$ is also complete.  

\begin{remark}
  \label{rem:Cox-ring-vs-sph-skeleton}
  Let $X$ be a complete spherical $G$-variety. According to
  \cite[4.3.2]{brcox}, the Cox ring $\Rm(X)$, considered without its
  grading, is determined by the spherical skeleton $\Rs_X$ (see
  also~\cite[Remark~5.4]{gag14}). We are going to take crucial
  advantage of this fact in the next section.
\end{remark}

Let $\Rs_1 \coloneqq (\Sigma_1, S^p_1, \Dm_1^a, \gdiv_1)$ and $\Rs_2
\coloneqq (\Sigma_2, S^p_2, \Dm_2^a, \gdiv_2)$ be spherical skeletons,
where the underlying root system of the spherically closed spherical
system of $\Rs_1$ (resp.~of $\Rs_2$) is $R_1$ (resp.~$R_2$).  The
spherical skeletons $\Rs_1$ and $\Rs_2$ will be considered to be
\emph{isomorphic}, written $\Rs_1 \cong \Rs_2$, if there exists an
isomorphism of root systems $\varphi_R\colon R_1 \to R_2$ as well as
bijections $\varphi_\bdiv\colon \Dm^a_1 \to \Dm^a_2$ and
$\varphi_\gdiv\colon \gdiv_1 \to \gdiv_2$ such that
$\varphi_R(\Sigma_1) = \Sigma_2$, $\varphi_R(S^p_1) = S^p_2$, and
$\rho_1(D) = \rho_2(\varphi_\bdiv(D)) \circ \varphi_R$ on $\Sigma_1$ for
every $D \in \Dm^a_1$
as well as $\rho_1(D) = \rho_2(\varphi_\gdiv(D)) \circ \varphi_R$ on $\Sigma_1$
for every $D \in \gdiv_1$. If
$R_1 = R_2$ and $\varphi_R = \operatorname{id}$, then we will write
$\Rs_1 = \Rs_2$ for $\Rs_1 \cong \Rs_2$.

\begin{definition}
  For a spherical skeleton $\Rs \coloneqq (\Sigma, S^p, \Dm^a, \gdiv)$
  we set
  \begin{align*}
    \Qm^*_\Rs \coloneqq \bigcap_{D\in\adiv} \{v \in \Lambda_\Q :
    \langle \rho(D), v\rangle \ge -m_D \}
  \end{align*}
  and define
  \begin{align*}
    \is{\Rs} \coloneqq \sup{} \rleft\{ \sum_{D \in \adiv} \rleft( m_D
    - 1 + \langle \rho(D), \sv \rangle \rright): \sv \in \Qm^*_\Rs
    \cap \cone(\Sigma)\rright\} \in \Q_{\ge0} \cup \{\infty\}\text{.}
  \end{align*}
\end{definition}

Note that we have
$\Qm^*_{\Rs_X} = Q^*_X \cap \lspan_\Q \Sigma$, hence $\is{X} =
\is{\Rs_X}$.

\begin{example}
  We determine the spherical skeleton $\Rs_X = (\Sigma, S^p, \Dm^a,
  \gdiv)$ in the situation of Example~\ref{ex:ix}. We have $\Sigma =
  \{\alpha\}$, $S^p = \emptyset$, $\Dm^a = \{D_1, D_2\}$, and $\gdiv =
  \{D_3, D_4\}$ with $\rho(D_1) = \rho(D_2) =
  \frac{1}{2}\alpha^\vee|_{\Lambda}$, $\rho(D_3) = 0 \in \Lambda^*$,
  and $\rho(D_4) = -\frac{1}{2}\alpha^\vee|_{\Lambda}$.
\end{example}

A \emph{multiplicity-free space} is a complex vector space $V$
equipped with a linear $G$-action such that $V$ is also a spherical
variety.  Multiplicity-free spaces have been classified
(see~\cite{sr-benrat, sr-leahy}). The spherical skeletons $\Rs_V$ for
all multiplicity-free spaces $V$ can be obtained from
\cite[Section~2]{gcscsv}. Note that in these cases $\Rs_V$ is complete
while $V$ is not.  We can now restate Conjecture~\ref{conj:esgp} in a
combinatorial way as follows.

\begin{conj}
  \label{conj:esgp2}
  Let $\Rs \coloneqq (\Sigma, S^p, \Dm^a, \Gamma)$ be a complete
  spherical skeleton.  Then we have
  \begin{align*}
    \is{\Rs} \le |R^+ \setminus R_{S^p}^+|\text{,}
  \end{align*}
  where equality holds if and only if $\Rs$ is isomorphic to the
  spherical skeleton $\Rs_V$ of a multiplicity-free space $V$.
\end{conj}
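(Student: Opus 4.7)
The plan is to prove Conjecture~\ref{conj:esgp2} by combining a structural reduction based on the Cox ring perspective of Remark~\ref{rem:Cox-ring-vs-sph-skeleton} with a classification-driven analysis. The Cox ring observation suggests that a complete spherical skeleton $\Rs$ encodes the ungraded Cox ring of a complete spherical variety, and that the extremal condition $\Rs \cong \Rs_V$ for a multiplicity-free space $V$ should correspond to that ring being a polynomial ring. This motivates the bound $|R^+\setminus R^+_{S^p}|$, which equals $\dim G/P$ for the parabolic $P$ associated with $S^p$, and which equals $\dim X - \rank X$ for any complete spherical $G$-variety $X$ with $\Rs_X = \Rs$.

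The first step is to rewrite
\begin{align*}
\sum_{D\in\adiv}\bigl(m_D - 1 + \langle \rho(D),\sv\rangle\bigr) = \sum_{D\in\adiv}(m_D - 1) + \Bigl\langle \sum_{D\in\adiv}\rho(D),\,\sv\Bigr\rangle,
\end{align*}
and make both summands explicit. Using axiom~($\operatorname{A2}$) together with the rules in the subsection ``The full set of colors'' to sum $\rho(D)$ over the colors $\Dm$, and Luna's formulae to sum $m_D - 1$, what appears is built from restrictions of the coroots $\alpha^\vee$ for $\alpha \in S\setminus S^p$ and from the pairings $\langle \alpha^\vee, 2\rho_S - 2\rho_{S^p}\rangle$; the contributions of $\gdiv$ satisfy $\langle \rho(D),\gamma\rangle \le 0$ by definition, which is what gives the bound a chance to hold.

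The second step is to maximise the resulting linear functional on $\Qm^*_\Rs \cap \cone(\Sigma)$. Completeness of $\Rs$ forces $\Qm^*_\Rs$ to be a bounded polytope, so the supremum is attained at a vertex where a saturated subset of the constraints $\langle \rho(D),\sv\rangle \ge -m_D$ is active. Writing $\sv = \sum_\gamma c_\gamma\gamma$ with $c_\gamma \ge 0$ and using axioms~($\operatorname{\Sigma 1}$) and ($\operatorname{\Sigma 2}$) in combination with the explicit list of spherically closed spherical roots in Table~\ref{tab:sr}, one should bound the coefficients $c_\gamma$ in terms of the $m_D$ and conclude that the maximum is at most $|R^+ \setminus R^+_{S^p}|$.

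The main obstacle is the equality case, where one must intrinsically recognise the spherical skeletons coming from multiplicity-free spaces. The natural strategy is to take the classification in \cite[Section~2]{gcscsv} as a target list and argue that simultaneous saturation of the constraints in the upper-bound step forces $\Rs$ into one of the listed families. For the symmetric case of Theorem~\ref{th:sym}, a more direct route is to use the Luna diagrams of the appendix as a parameter space, organise the calculation by the restricted root system, and verify both the inequality and the equality family by family; the technical subtleties there will be handling spherical roots of type $2\alpha$, cases where axiom~($\operatorname{\Sigma 2}$) identifies coroots of distinct simple roots on $\Mm$, and tracking the possible contributions of the $G$-invariant divisors in $\gdiv$.
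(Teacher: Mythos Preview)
The paper does not prove Conjecture~\ref{conj:esgp2}; it is stated as an open conjecture, shown equivalent to Conjecture~\ref{conj:esgp} in Theorem~\ref{th:conjeq}, and only established in the symmetric case via Theorem~\ref{th:sym}. So there is no ``paper's own proof'' of this statement to compare against for the general case.

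Your proposal for the general case has a genuine gap at the decisive step. In your second step you write that, after expressing $\sv = \sum_\gamma c_\gamma\gamma$, ``one should bound the coefficients $c_\gamma$ in terms of the $m_D$ and conclude that the maximum is at most $|R^+\setminus R^+_{S^p}|$.'' This is precisely the content of the conjecture, and you give no mechanism for obtaining such a bound. The axioms $(\Sigma1)$ and $(\Sigma2)$ constrain pairings $\langle\alpha^\vee,\gamma\rangle$ for individual simple roots; they do not by themselves control the size of the polytope $\Qm^*_\Rs\cap\cone(\Sigma)$ against the root-count $|R^+\setminus R^+_{S^p}|$, which involves the global combinatorics of $R$ relative to $S^p$. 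Likewise, your treatment of the equality case (``simultaneous saturation of the constraints \ldots\ forces $\Rs$ into one of the listed families'') is an aspiration rather than an argument: multiplicity-free spaces are characterised by the smoothness criterion of \cite{gcscsv}, not by any evident saturation pattern in the linear program, and the paper's Theorem~\ref{th:vp} goes through Cox rings rather than through the linear program directly.

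Your final paragraph, restricting to the symmetric case and proposing to ``use the Luna diagrams of the appendix as a parameter space, organise the calculation by the restricted root system, and verify both the inequality and the equality family by family,'' is essentially what the paper does in Sections~\ref{sec:symmetric}--9 and Appendix~\ref{sec:tables}. The paper adds two reductions you do not mention: passing from an arbitrary $\Rs$ to the reduced elementary skeleton $\Rs^{\vel}$ via Proposition~\ref{prop:elem}, and then to $|\Gamma|=1$ via Remark~\ref{rem:less-markings}, before the tabulated linear-program computations. For the equality analysis the paper also needs the finer comparisons $\Rs\trianglelefteq\Rs^{\el}$ of Lemma~\ref{lemma:split} and Lemma~\ref{lemma:lu}.
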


The equivalence of Conjecture~\ref{conj:esgp2} and
Conjecture~\ref{conj:esgp} will be shown in the next section. We
conclude the present section by explaining why $\is{\Rs}$ is always
nonnegative and rational.

\begin{prop}
  Let $\Rs \coloneqq (\Sigma, S^p, \Dm^a, \Gamma)$ be a spherical
  skeleton.  Then we have $\is{\Rs} \in \Q_{\ge 0} \cup \{\infty\}$.
\end{prop}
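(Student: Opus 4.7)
The plan is to recognize $\is{\Rs}$ as the supremum of an affine function with rational coefficients over a pointed rational polyhedron, and then invoke standard linear programming.

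First, I would verify nonnegativity by evaluating the objective at $\sv = 0$. Since each $m_D$ is a positive integer (for $D \in \bdiv$ via Luna's formulas recalled in Section~\ref{sec:spher-syst-luna}, and $m_D = 1$ for $D \in \gdiv$), the point $\sv = 0$ satisfies every defining inequality $\langle \rho(D), \sv \rangle \ge -m_D$ of $\Qm^*_\Rs$ and trivially lies in $\cone(\Sigma)$. The objective value at $0$ equals $\sum_{D \in \adiv}(m_D - 1)$, a nonnegative integer, so $\is{\Rs} \ge 0$.

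Next, for rationality, I would present the feasible region $\Qm^*_\Rs \cap \cone(\Sigma)$ as a rational polyhedron in $\Lambda_\Q$: the polyhedron $\Qm^*_\Rs$ is cut out by finitely many rational half-spaces (with $\rho(D) \in \Lambda^*$ and $m_D \in \Z$), and $\cone(\Sigma)$ is the rational polyhedral cone generated by the finite subset $\Sigma \subseteq \Lambda$. I would then show this polyhedron is pointed: inspection of Table~\ref{tab:sr} reveals that every element of $\Sigma \subseteq \Sigma^{sc}(R)$ is a nonnegative rational combination of the simple roots of $R$, so $\cone(\Sigma)$ is contained in the strictly convex cone they span, hence is itself pointed, and the feasible region inherits this property.

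With these ingredients in hand, I would invoke the standard linear programming fact that the supremum of an affine function with rational coefficients over a nonempty pointed rational polyhedron is either $+\infty$ or attained at a vertex of the polyhedron; since vertices of rational polyhedra are rational points, the supremum, when finite, is a rational number. Applying this to the objective $\sv \mapsto \sum_{D \in \adiv}(m_D - 1) + \langle \sum_{D \in \adiv}\rho(D), \sv \rangle$ yields $\is{\Rs} \in \Q_{\ge 0} \cup \{\infty\}$. I do not foresee any substantial obstacle in this argument; the only points meriting a brief check are the positivity of the $m_D$ and the pointedness of $\cone(\Sigma)$.
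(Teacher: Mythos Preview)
Your proposal is correct and follows essentially the same strategy as the paper: both recognise the problem as maximising a rational affine function over the rational polyhedron $\Qm^*_\Rs \cap \cone(\Sigma)$ and conclude rationality from the vertex attainment of a finite LP optimum. One genuine difference is worth noting. For nonnegativity you simply evaluate the objective at $\sv = 0$, which is the cleanest route. The paper instead rewrites the primal LP in coordinates $\R^\Sigma$ (using that $\Sigma$ is a basis of $\Lambda_\Q$), passes to the dual LP, and observes that the dual objective $b^ty$ is nonnegative since $b_D = m_D > 0$ and $y \ge 0$; this is more roundabout here, but the dual formulation is set up because it is reused verbatim in Section~8 to compute $\is{\Rs}$ explicitly in the symmetric case. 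Your explicit check that $\cone(\Sigma)$ is pointed (via the observation that spherically closed spherical roots lie in the positive root cone) is a point the paper handles implicitly through its identification $\Lambda_\Q \cong \Q^\Sigma$, which makes $\cone(\Sigma)$ the nonnegative orthant $\{x \ge 0\}$.
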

\begin{proof}
  We identify $\Lambda_\Q \otimes_\Q \R \cong \R^\Sigma$.  We define
  $c \in \R^\Sigma$ by $c_\gamma \coloneqq \sum_{D \in \Delta}
  \langle\rho(D), \gamma\rangle$ as well as $b \in \R^\adiv$ by $b_D
  \coloneqq m_D$ and denote by $A \in \R^{\adiv \times \Sigma}$ the
  matrix with $A_{D,\gamma} \coloneqq \langle-\rho(D), \gamma\rangle$.
  Then we have
  \begin{align*}
    \is{\Rs} = \sum_{D\in\adiv} (m_D-1) + \sup{} \rleft\{c^tx :
    \text{$x\in\R^\Sigma$, $Ax \le b$, $x \ge 0$}\rright\}\text{,}
  \end{align*}
  where the supremum is the optimal value of a linear program.  Assume
  $\is{\Rs} < \infty$.  Passing to the dual linear program, we obtain
  \begin{align*}
    \is{\Rs} = \sum_{D\in\adiv} (m_D-1) + \inf{}\rleft\{b^ty :
    \text{$y \in \R^{\adiv}$, $A^ty \ge c$, $y \ge
      0$}\rright\}\text{.}
  \end{align*}
  As $b^ty = \sum_{D\in\adiv}b_Dy_D$ and $b_D = m_D > 0$, we get
  $\is{\Rs} \ge 0$. As we can find a solution $x$ of the original linear
  program which is a vertex of $\Qm^*_{\Rs} \cap \cone(\Sigma)$, we obtain
  $\is{\Rs} \in \Q$.
\end{proof}

\section{Cox rings and multiplicity-free spaces}
\label{sec:cox-rings-mult}

The purpose of this section is to prove the equivalence of
Conjecture~\ref{conj:esgp} and Conjecture~\ref{conj:esgp2}.  We will
make fundamental use of the fact that spherical varieties have
finitely generated Cox rings, a result due to Brion (see~\cite{brcox},
see also \cite[Theorem 4.3.1.5]{coxrings}).  For details and
references on Cox rings, we refer the reader to \cite{coxrings}.

If $X$ is a complete spherical $G$-variety, the following notation
will be used throughout this section. We fix a Borel subgroup $B
\subseteq G$ and a maximal torus $T \subseteq B$ and write $S
\subseteq R$ for the induced set of simple roots of the root system $R
\subseteq \Xf(T)$. We denote by $\Rm(X)$ the Cox ring of $X$.  There
exists an open subset $\widehat{X} \subseteq \overline{X} \coloneqq
\Spec \Rm(X)$ with complement of codimension at least $2$ such that
there exists a good quotient $\pi\colon \widehat{X} \to X$ by the
quasitorus $\Td \coloneqq \Spec \C[\Cl(X)]$
(see~\cite[Construction~1.6.3.1]{coxrings}).

A quasitorus (also called a diagonalizable group) is
an algebraic group isomorphic to a closed subgroup of a torus
and is determined by its character group. The
character group of $\Td$ is $\Cl(X)$, the divisor class group of $X$.

According to
\cite[Proposition~4.2.3.2]{coxrings}, there is a finite epimorphism
$G' \to G$ of connected algebraic groups and a $G'$-action on
$\widehat{X}$ (hence also on $\overline{X}$) commuting with the
$\Td$-action such that $\pi \colon \widehat{X} \to X$ becomes
$\overline{G}$-equivariant where $\overline{G} \coloneqq G' \times
\Td^\circ$ acts on $X$ via the epimorphism $\varepsilon \colon
\overline{G} \to G$. There is a Borel subgroup $\overline{B}$ of
$\overline{G}$ and a maximal torus $\overline{T} \subseteq
\overline{B}$ such that $\overline{B}$ is mapped onto $B$ and
$\overline{T}$ is mapped onto $T$ by $\varepsilon$.

As $\pi \colon \widehat{X} \to X$ is a $\overline{G}$-equivariant
geometric quotient (see \cite[Corollary 1.6.2.7]{coxrings}) over the
set of smooth points, the preimage of the open $\overline{B}$-orbit in
$X$ is a finite union of $\overline{B}$-orbits in $\widehat{X}$, hence
an open $\overline{B}$-orbit.  Therefore the $\overline{G}$-variety
$\widehat{X}$ (hence also $\overline{X}$) is spherical.  Moreover, we
have $\dim \overline{X} = \dim X + \dim \Td$ and the stabilizers of
the open $\overline{B}$-orbits in $\overline{X}$ and $X$ coincide.

We identify the root system (and the set of simple roots) of $G$ and
$\overline{G}$ via $\varepsilon$.  Then the spherical skeleton $\Rs_X$
does not depend on whether we consider $X$ as a $G$-variety or a
$\overline{G}$-variety.  Moreover, $\Rs_{\overline{X}}$ is also a
spherical skeleton for the root system $R$, so that we may ask whether
$\Rs_{\overline{X}} = \Rs_X$ holds.

\begin{prop}
  \label{prop:lincox}
  Let $X$ be a complete spherical $G$-variety.  The variety $X$ is
  isomorphic to a toric variety if and only if $\overline{X}$ is
  $\overline{G}$-equivariantly isomorphic to a multiplicity-free
  space.
\end{prop}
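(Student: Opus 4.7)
The plan is to reduce the proposition to a statement about the Cox ring $R \coloneqq \Rm(X)$. I will use two ingredients: the well-known characterization that a complete normal variety is isomorphic to a toric variety if and only if its Cox ring is a polynomial ring (see, e.g., \cite{coxrings}), and an equivariant linearization argument for the $\overline{G}$-action on $\overline{X}$.

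For the ``if'' direction, if $\overline{X}$ is $\overline{G}$-equivariantly isomorphic to a multiplicity-free space $V$, then $\overline{X} \cong V \cong \Ad^n$ as varieties, so $R \cong \C[V^*]$ is a polynomial ring, and the cited characterization yields that $X$ is toric.

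For the ``only if'' direction, assume $X$ is toric. Then $R$ is a polynomial ring and $\overline{X}$ is abstractly isomorphic to affine space; the task is to make the $\overline{G}$-action linear. Since $X$ is complete, $R_0 = H^0(X,\Om_X) = \C$, and each weight space $R_\chi$ of the $\Cl(X)$-grading is $\overline{G}$-invariant (the $\Td^\circ$-action is via the grading, and $G'$ commutes with $\Td^\circ$). Hence the origin of $\overline{X}$ is a $\overline{G}$-fixed point with $\overline{G}$-stable maximal ideal $\mathfrak{m}_0$, and $\mathfrak{m}_0^2$ is $\overline{G}$-stable as well. Since $\overline{G} = G' \times \Td^\circ$ is reductive, complete reducibility applied in each finite-dimensional weight space yields a $\overline{G}$-invariant complement $V^* \subseteq \mathfrak{m}_0$ to $\mathfrak{m}_0^2$. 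Set $V \coloneqq (\mathfrak{m}_0/\mathfrak{m}_0^2)^*$. The inclusion $V^* \hookrightarrow R$ extends to a $\overline{G}$-equivariant graded algebra map $\mathrm{Sym}(V^*) \to R$, which is surjective by graded Nakayama and injective because both sides are polynomial rings of equal Krull dimension $\dim V$. Dualizing gives a $\overline{G}$-equivariant isomorphism $\overline{X} \cong V$, and sphericity of $\overline{X}$ then forces $V$ to be multiplicity-free.

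The key step --- and the main obstacle --- is this equivariant linearization. It does not invoke the general (open) linearization conjecture for reductive actions on affine space; rather, it exploits the canonical $\Cl(X)$-grading on $R$ to produce both a $\overline{G}$-fixed origin and the finite-dimensional $\overline{G}$-invariant weight spaces in which reductivity of $\overline{G}$ can be applied directly.
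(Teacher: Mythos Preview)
Your argument is correct, and for the ``if'' direction it coincides with the paper's. For the ``only if'' direction you take a genuinely different route. The paper argues as follows: since $\overline{X}$ is affine spherical, it has a unique closed $\overline{G}$-orbit, and then the \'etale slice theorem of Luna (via \cite[Proposition~5.1]{kp85}) gives that the reductive action on $\overline{X}\cong\Ad^n$ is linearizable. Your approach instead exploits the $\Cl(X)$-grading directly: the grading and $R_0=\C$ pin down a $\overline{G}$-fixed origin, and complete reducibility in each finite-dimensional graded piece produces a $\overline{G}$-invariant lift $V^*$ of $\mathfrak{m}_0/\mathfrak{m}_0^2$, from which the linearization is read off. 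This is more self-contained and avoids the slice theorem entirely; the paper's argument is shorter but imports a deeper result.

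One point deserves a word of justification: your invocation of ``graded Nakayama'' is not automatic for an arbitrary $\Cl(X)$-grading. What makes it work here is that the support monoid $M=\{\chi:R_\chi\ne 0\}$ is pointed. Indeed, since $X$ is complete we have $R_0=\C$, and since $R$ is a polynomial ring we have $R^*=\C^*$; hence if $\chi$ and $-\chi$ both lie in $M$ (or if $\chi\in M$ is nonzero torsion), picking nonzero homogeneous elements and multiplying produces a nonconstant unit, a contradiction. Thus $M$ is a pointed, finitely generated, torsion-free monoid, so one can refine to an $\N$-grading with $R_0=\C$, and the usual graded Nakayama applies to give surjectivity of $\mathrm{Sym}(V^*)\to R$. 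With this remark your proof is complete.
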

\begin{proof}
  \enquote{$\Rightarrow$}: As $X$ is isomorphic to a toric variety,
  the Cox ring $\Rm(X)$ is isomorphic to a polynomial ring
  (see~\cite{tcox}), hence $\overline{X} \cong \Ad^n$.  As
  $\overline{X}$ is an affine spherical variety, it contains exactly
  one closed $\overline{G}$-orbit.  It follows from the \'etale slice
  theorem of Luna (see~\cite[Proposition~5.1]{kp85}) that the action
  of $\overline{G}$ on $\overline{X} \cong \Ad^n$ is linearizable, \ie
  there is a multiplicity-free $\overline{G}$-space $\Ad^n$ and a
  $\overline{G}$-equivariant isomorphism $\overline{X} \cong \Ad^n$.

  \enquote{$\Leftarrow$}: As $\overline{X} \cong \Ad^n$, the Cox ring
  $\Rm(X)$ is a polynomial ring. As $X$ is complete, it follows from
  \cite[Exercise~3.9(1)]{coxrings} that $X$ is isomorphic to a toric
  variety.
\end{proof}

For any combinatorial object associated to $X$ we denote the one
associated to $\overline{X}$ by the same symbol with an added bar, \eg
we have $\overline{\rho} \colon \overline{\Delta} \to
\overline{\Lambda}{}^*$.

Since $\widehat{X} \subseteq \overline{X}$ has complement of
codimension at least $2$, we can naturally identify the
$\overline{B}$-invariant Weil divisors in $\overline{X}$ and
$\widehat{X}$.  We say that $\pi$ induces a bijection $\pi^*\colon
\adiv \to \overline{\adiv}$ if the support of the pullback under $\pi$
of every $D \in \adiv$ is irreducible and this defines a bijection
$\adiv \to \overline{\adiv}$.

\begin{remark}
  \label{rem:bs}
  Assume that $\pi$ induces a bijection $\pi^*\colon \adiv \to
  \overline{\adiv}$. Then this bijection restricts to a bijection
  $\pi^*\colon \bdiv \to \overline{\bdiv}$.  It follows from
  \cite[Lemme~5.2]{camus}, see also \cite[Lemma~3.1]{gcscsv}, that
  $\Ss_{\overline{X}} = \Ss_X$.
\end{remark}

\begin{remark}
  If $\Cl(X)$ is free, then $\Td$ is connected (\ie a torus) and the
  map $\pi \colon \widehat{X} \to X$ is a locally trivial
  $\Td$-principal bundle (with respect to the Zariski topology) over
  $X_{\reg}$ (see, for instance, \cite[Exercise~1.17(1)]{coxrings}).
  In this situation, it is straightforward to show that $\pi$ induces
  a bijection $\pi^*\colon \adiv \to \overline{\adiv}$ (hence
  $\Ss_{\overline{X}} = \Ss_X$ by Remark~\ref{rem:bs}) and that
  $\overline{\rho}(\pi^*(D)) = \rho(D)$ for every $D \in \gdiv$.
  Therefore we have $\Rs_{\overline{X}} = \Rs_X$.
\end{remark}

\begin{example}
  \label{cex:sph-sys-stays-same}
  If $\Cl(X)$ is not free, then we may have $\Rs_{\overline{X}} \ne
  \Rs_X$: Let $G \coloneqq \SL_2 \times \C^*$.  We fix a maximal torus
  contained in some Borel subgroup and denote by $\alpha$ the unique
  simple root of $\SL(2)$.  Moreover, we denote by $\varepsilon$ a
  primitive character of $\C^*$.  We define the spherically closed
  spherical system $\Ss \coloneqq (\Sigma, S^p, \Dm^a)$ with $\Sigma
  \coloneqq \{2\alpha\}$, $S^p\coloneqq\emptyset$, and $\Dm^a
  \coloneqq \emptyset$.  Let $\Mm$ be the lattice spanned by $b_1
  \coloneqq \alpha + \varepsilon$ and $b_2 \coloneqq \alpha -
  \varepsilon$. Then $(b_1, b_2)$ is a basis of $\Mm$, and we denote
  by $(b_1^*, b_2^*)$ the corresponding dual basis of the dual lattice
  $\Nm$.  Then $(\Mm, \emptyset)$ is an augmentation of $\Ss$, and we
  denote by $H \subseteq G$ the corresponding spherical subgroup.
 
  The polytope $Q \coloneqq \conv(b_1^*+b_2^*, -b_1^*+b_2^*,
  -b_1^*-b_2^*, b_1^*-b_2^*) \subseteq \Nm_\Q$ is $G/H$-reflexive.
  Its dual polytope is $Q^* = \conv(b_1, b_2, -b_1, -b_2) \subseteq
  \Mm_\Q$, The polytopes are illustrated in Figure~\ref{fig:sl2n} (as
  in Example~\ref{ex:ghrp}).  We denote by $G/H \hookrightarrow X$ the
  Gorenstein spherical Fano embedding associated to the
  $G/H$-reflexive polytope $Q$.
  \begin{figure}[ht!]
    \begin{tikzpicture}[scale=0.6]
      \clip (-2.04, -2.04) -- (2.04, -2.04) -- (2.04, 2.04) -- (-2.04, 2.04) -- cycle;
      \fill[color=gray!30] (-5, 5) -- (-5, -5) -- (5, -5) -- cycle;
      \draw[draw=black] ( 1, 1 ) -- ( -1, 1 ) -- ( -1, -1 ) -- ( 1, -1 ) -- cycle;
      \foreach \x in {-4,...,4} \foreach \y in {-4,...,4} \fill (\x, \y) circle (1pt);
      \node at (-0.4,0) {$Q$};
      \draw[dashed,-latex] (0, 0) -- (1, 1); 
    \end{tikzpicture}\hspace*{1cm}
    \begin{tikzpicture}[scale=0.6]
      \clip (-2.04, -2.04) -- (2.04, -2.04) -- (2.04, 2.04) -- (-2.04, 2.04) -- cycle;
      \draw[draw=black] ( -1, 0 ) -- ( 0, -1 ) -- ( 1, 0 ) -- ( 0, 1 ) -- cycle;
      \foreach \x in {-4,...,4} \foreach \y in {-4,...,4} \fill (\x, \y) circle (1pt);
      \draw (1, 0) circle (3pt);
      \draw (0, 1) circle (3pt);
      \draw[densely dotted,-latex] (1, 0) -- (2, 1);
      \draw[densely dotted,-latex] (0, 1) -- (1, 2);
      \node at (0.5,0) {$Q^*$};
    \end{tikzpicture}
    \caption{Illustration to Example~\ref{cex:sph-sys-stays-same}.}
    \label{fig:sl2n}
  \end{figure}
  Then $\Cl(X) \cong \Z^2 \oplus \Z/2\Z$ has torsion, \ie $\Td$ is not
  connected. According to \cite[4.3.2]{brcox} (the computation is
  similar to \cite[Example~5.5]{gag14}), we have
  \begin{align*}
    \Rm(X) \cong \C[ X_1, X_2, X_3, W_1, W_2, W_3 ] / \langle X_{1}
    X_{2} - X_{3}^2 - W_2^2 \rangle\text{.}
  \end{align*}
  It is straightforward to check that we may take $\overline{G}
  \coloneqq \SL(2) \times (\C^*)^3$ acting on $\overline{X}$ with the
  divisor of $X_1$ consisting of two $\overline{B}$-invariant, but not
  $\overline{G}$-invariant, irreducible components. Hence we have
  $\big|\overline{\bdiv}\big| \ge 2$.  In particular,
  $\Rs_{\overline{X}} \ncong \Rs_X$.
\end{example}

\begin{lemma}
  \label{lem:primitive-rhos}
  Assume that $\pi$ induces a bijection $\pi^* \colon \adiv \to
  \overline{\adiv}$ and that $\overline{\rho}(D) \in
  \overline{\Lambda}{}^*$ is either $0$ or primitive for every $D \in
  \overline{\Gamma}$.  Then we have $\Rs_{\overline{X}} = \Rs_X$.
\end{lemma}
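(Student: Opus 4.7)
The plan is to combine Remark~\ref{rem:bs} with a valuation comparison between $X$ and $\overline{X}$. By Remark~\ref{rem:bs}, the hypothesis that $\pi^{*}\colon\adiv\to\overline{\adiv}$ is a bijection already forces $\Ss_{\overline{X}}=\Ss_{X}$; in particular $\Sigma=\overline{\Sigma}$, $\Lambda=\overline{\Lambda}$, and the triple $(\Sigma,S^{p},\Dm^{a})$ together with its map to $\Lambda^{*}$ agrees for $X$ and $\overline{X}$. Since the bijection $\pi^{*}$ restricts to a bijection $\gdiv\to\overline{\gdiv}$, the only remaining task is to verify $\overline{\rho}(\pi^{*}(D))=\rho(D)$ for every $D\in\gdiv$.

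Fix such a $D$ and set $\widetilde{D}\coloneqq\pi^{*}(D)\in\overline{\gdiv}$. My first step is to argue that the pullback of $D$ as a Weil divisor on $\widehat{X}$ (and hence on $\overline{X}$, since the complement has codimension at least two) equals $k_{D}\widetilde{D}$ for some positive integer $k_{D}$, namely the ramification index of $\pi$ along $\widetilde{D}$; this is where the irreducibility of the support (part of the hypothesis) enters. This produces the valuation identity $\nu_{\widetilde{D}}\circ\pi^{*}=k_{D}\cdot\nu_{D}$ on $\C(X)^{\times}$. Next I would pull back $B$-semi-invariants: via the $\overline{G}$-equivariance of $\pi$ and the identification of root systems through $\varepsilon$, for every $\chi\in\Lambda\subseteq\Mm\cap\overline{\Mm}$ the function $\pi^{*}(f_{\chi})$ is an $\overline{B}$-semi-invariant of weight $\chi$. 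Evaluating the above identity on $f_\chi$ and restricting to $\chi\in\Lambda$ yields the key relation
\[
\overline{\rho}(\widetilde{D})=k_{D}\cdot\rho(D)\quad\text{in }\Lambda^{*}.
\]

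The primitivity hypothesis now finishes the argument: since $\rho(D)\in\Lambda^{*}$ is an integral vector and $k_{D}\ge1$, if $\overline{\rho}(\widetilde{D})=0$ then $\rho(D)=0$, while if $\overline{\rho}(\widetilde{D})$ is primitive the relation above forces $k_{D}=1$ (so $\rho(D)=\overline{\rho}(\widetilde{D})$ directly). In either case $\rho(D)=\overline{\rho}(\widetilde{D})$, and combined with $\Ss_{\overline{X}}=\Ss_{X}$ from Remark~\ref{rem:bs} this produces $\Rs_{\overline{X}}=\Rs_{X}$. The only step requiring some care is the multiplicity formula $\pi^{*}(D)=k_{D}\widetilde{D}$ and the associated valuation identity, but I expect this to be a clean consequence of the good quotient structure of $\pi$ (in fact geometric on the smooth locus), so I do not anticipate a genuine obstacle.
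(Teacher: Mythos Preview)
Your proposal is correct and follows essentially the same route as the paper. The only cosmetic difference is that the paper obtains the relation $\overline{\rho}(\pi^{*}(D))=k_{D}\,\rho(D)$ by invoking Knop's functoriality result \cite[Theorem~4.1]{knopsph} for the map $\pi_{*}\colon\overline{\Nm}_{\Q}\to\Nm_{\Q}$ and then restricting to $\Lambda$, whereas you unpack this step by hand via the ramification index and the pullback of $B$-semi-invariants; the two arguments have the same content.
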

\begin{proof}
  According to Remark~\ref{rem:bs}, we have $\Ss_{\overline{X}} =
  \Ss_X$.  In particular, we identify $\overline{\Lambda} = \Lambda$.
  The bijection $\pi^*\colon \adiv \to \overline{\adiv}$ restricts to
  a bijection $\pi^*\colon \gdiv \to \overline{\gdiv}$.  Let
  $\pi_*\colon \overline{\Nm}_\Q \to \Nm_\Q$ be the map dual to the
  inclusion $\pi^*\colon \Mm_\Q \to \overline{\Mm}_\Q$ induced by
  $\pi$.  It follows from \cite[Theorem 4.1]{knopsph} that for every
  $D \in \gdiv$ we have $\pi_*(\overline{\rho}'(\pi^*(D))) =
  k_D\rho'(D)$ for some positive integer $k_D$.  As the restriction of
  $\pi^*\colon \Mm_\Q \to \overline{\Mm}_\Q$ to $\Lambda$ is the
  identity, we obtain $\overline{\rho}(\pi^*(D)) = k_D \rho(D)$, for
  every $D \in \gdiv$. Hence we have either $\overline{\rho}(\pi^*(D))
  = \rho(D) = 0$ or $k_D = 1$ as $\overline{\rho}(\pi^*(D))$ is
  primitive.
\end{proof}

\begin{prop}
  \label{prop:qt}
  Let $X$ be a complete spherical $G$-variety.  Assume that the
  variety $X$ is isomorphic to a toric variety or, equivalently, that
  $\overline{X}$ is a multiplicity-free space.  Then we have
  $\Rs_{\overline{X}} = \Rs_X$.
\end{prop}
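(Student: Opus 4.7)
The plan is to verify the two hypotheses of Lemma~\ref{lem:primitive-rhos}, which then immediately yields $\Rs_{\overline{X}} = \Rs_X$. By Proposition~\ref{prop:lincox}, assuming $X$ to be toric is equivalent to $\overline{X} \cong \Ad^n$ being $\overline{G}$-equivariantly a multiplicity-free space, and in particular to $\Rm(X) \cong \Cd[x_1,\ldots,x_n]$ being a polynomial ring.

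To establish the bijection $\pi^*\colon \adiv \to \overline{\adiv}$, I would use the explicit description of the Cox ring of a spherical variety (Remark~\ref{rem:Cox-ring-vs-sph-skeleton} and \cite[4.3.2]{brcox}): it is generated by canonical $B$-semi-invariant sections $f_D$ indexed by $\adiv$, together with certain $G$-invariant sections associated to the spherical roots, subject to explicit relations. When $\Rm(X)$ is polynomial, no nontrivial relations remain, and one can identify the generators $f_D$ (and the $G$-invariant extra generators) with the coordinates $x_i$ of $\Ad^n$. In these coordinates the $\overline{B}$-invariant prime divisors of $\overline{X}$ are exactly the coordinate hyperplanes, and for $D \in \adiv$ the support of $\pi^*(D)$ equals the vanishing locus of $f_D$, a single coordinate hyperplane in $\Ad^n$. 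This defines the required bijection, with the canonical generators mapping into $\overline{\bdiv}$ and the extra $G$-invariant generators mapping into $\overline{\gdiv}$.

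For the primitivity of $\overline{\rho}(D) \in \overline{\Lambda}^*$ for $D \in \overline{\gdiv}$, I would first note that $\overline{\rho}'(D) \in \overline{\Nm}$ is primitive, since $\overline{X}$ is smooth and $D$ is a smooth prime hypersurface. Using Remark~\ref{rem:bs} to identify $\overline{\Lambda} = \Lambda$, the remaining task is to check that primitivity descends to the restriction $\overline{\rho}'(D)|_{\overline{\Lambda}}$. I would handle this by a case-by-case inspection, using the classification of multiplicity-free spaces referred to in \cite[Section~2]{gcscsv}.

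The main obstacle is this primitivity descent: restricting a primitive vector from $\overline{\Nm}$ to the typically smaller sublattice $\overline{\Lambda}^* \hookleftarrow \overline{\Nm}$ need not preserve primitivity, so a classification-based case-by-case verification seems unavoidable. Once both hypotheses of Lemma~\ref{lem:primitive-rhos} are confirmed, the conclusion $\Rs_{\overline{X}} = \Rs_X$ follows.
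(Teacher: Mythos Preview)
Your overall plan—reduce to Lemma~\ref{lem:primitive-rhos}—matches the paper, and the primitivity step is handled there just as you propose: the list in \cite[Section~2]{gcscsv} shows directly that each nonzero $\overline{\rho}(D)$ for $D \in \overline{\Gamma}$ pairs to $-1$ with some spherical root, hence is primitive. (The detour through primitivity in $\overline{\Nm}$ is unnecessary.)

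The gap is in your bijection argument. The Cox ring of a spherical variety is generated by the canonical sections $f_D$ as a $\overline{G}$-\emph{algebra}, not as a $\Cd$-algebra by the $f_D$ together with $G$-invariants associated to spherical roots. Already for $X = \Pd^1$ under $G = \SL(2)$ one has $|\adiv| = 1$ and $\Sigma = \emptyset$, yet $\Rm(X) = \Cd[x_1, x_2]$; the second coordinate is neither an $f_D$ nor $G$-invariant. Correspondingly, in $\overline{X} = \Ad^2$ only one of the two coordinate hyperplanes is $\overline{B}$-invariant, so your identification of $\overline{\adiv}$ with the set of all coordinate hyperplanes fails, and no bijection $\adiv \to \overline{\adiv}$ drops out of your description.

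The paper instead establishes the bijection by a counting argument. Since the unique closed $\overline{G}$-orbit in the multiplicity-free space $\overline{X}$ is a point and $\overline{X}$ is factorial, the elements $\overline{\rho}'(D)$ for $D \in \overline{\adiv}$ form a basis of $\overline{\Nm}_\Q$, giving $|\overline{\adiv}| = \rank \overline{X}$. Then $\rank \overline{X} = \rank X + \dim \Td$ and $\dim \Td = \rank \Cl(X) = |\adiv| - \rank X$ (via \cite[Proposition~4.1.1]{brcox}) yield $|\overline{\adiv}| = |\adiv|$. With the cardinalities equal, surjectivity of $\pi$ (so that each $\pi^{-1}(D)$ has at least one dominant component) forces $\pi^*$ to be a bijection.
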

\begin{proof}
  As $\overline{X}$ is an affine spherical variety, it contains
  exactly one closed $\overline{G}$-orbit $\overline{Y}$, which is a
  point (as every multiplicity-free space has the origin as fixed
  point).  It follows from \cite[Theorem 6.6]{knopsph} that the set
  $\{\overline{\rho}'(D) : D \in \overline{\adiv}\}$ spans
  $\overline{\Nm}_\Q$ as vector space. As $\overline{X}$ has a
  factorial coordinate ring, the above set is also linearly
  independent. We obtain $\big|\overline{\Delta}\big| = \rank
  \overline{X}$.

  As the stabilizers of the open $\overline{B}$-orbits in
  $\overline{X}$ and $X$ coincide, we obtain $\rank \overline{X} =
  \rank X + \dim \Td$ from \cite[Theorem 6.6]{knopsph}.  By
  \cite[Proposition 4.1.1]{brcox}, we have
  \begin{align*}
    \dim \Td = \rank \Cl(X) = |\adiv| - \rank X\text{,}
  \end{align*}
  which implies $\big|\overline{\adiv}\big| = |\adiv|$.

  As $\pi$ is surjective, for every $D \in \adiv$ the preimage
  $\pi^{-1}(D)$ contains at least one irreducible component mapping
  dominantly to the irreducible divisor $D$. It follows that $\pi$ induces a
  bijection $\pi^*\colon \adiv \to \overline{\adiv}$.

  Every multiplicity-free space can be written as product
  of trivial and indecomposable multiplicity-free spaces
  in the sense of \cite[Definition~1.1]{gcscsv}.
  It follows that for every $D \in \overline{\gdiv}$
  we have either $\overline{\rho}(D) = 0$ or $D$
  corresponds to a symbol \enquote{$\gamma$} occurring in 
  the list in \cite[Section~2]{gcscsv}, which means
  $\langle \overline{\rho}(D), \gamma \rangle = -1$
  for the corresponding spherically closed spherical root $\gamma 
  \in \overline{\Lambda}$,
  in particular $\overline{\rho}(D)$ is primitive in 
  $\overline{\Lambda}{}^*$.

  The result now follows from Lemma \ref{lem:primitive-rhos}.
\end{proof}

\begin{theorem}
  \label{th:vp}
  Let $X$ be a complete spherical $G$-variety. Then the following
  statements are equivalent.
  \begin{enumerate}
  \item The variety $X$ is isomorphic to a toric variety.
  \item There exists a multiplicity-free space $V$ such that $\Rs_X
    \cong \Rs_V$.
  \end{enumerate}
\end{theorem}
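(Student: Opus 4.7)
The plan is to prove both implications using Propositions~\ref{prop:lincox} and~\ref{prop:qt} together with Remark~\ref{rem:Cox-ring-vs-sph-skeleton}, which asserts that the ungraded ring $\Rm(X)$ is determined by the spherical skeleton $\Rs_X$.

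The implication (1) $\Rightarrow$ (2) should be almost immediate: if $X$ is toric, then Proposition~\ref{prop:lincox} produces a $\overline{G}$-equivariant isomorphism between $\overline{X}$ and a multiplicity-free space $V$, while Proposition~\ref{prop:qt} gives $\Rs_{\overline{X}} = \Rs_X$. Combining the two gives $\Rs_X = \Rs_V$, which establishes (2) with this choice of $V$.

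The content is in (2) $\Rightarrow$ (1). Starting from $\Rs_X \cong \Rs_V$ for some multiplicity-free space $V$, I would apply Remark~\ref{rem:Cox-ring-vs-sph-skeleton}: since the ungraded ring $\Rm(X)$ is determined by $\Rs_X$, the goal is to conclude that $\Rm(X)$ is a polynomial ring by comparing with the analogous description for $V$. Because $V$ is smooth, affine, and factorial, $\Cl(V)$ is trivial and the coordinate ring $\C[V]$ plays the role of the Cox ring of $V$; moreover $\C[V]$ is a polynomial ring. The aim is then to argue that Brion's combinatorial presentation from \cite[4.3.2]{brcox} applied to $\Rs_V$ reproduces $\C[V]$, which—combined with $\Rs_X \cong \Rs_V$—forces $\Rm(X)$ to be a polynomial ring as well. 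From $\overline{X} = \Spec \Rm(X) \cong \Ad^n$ and the completeness of $X$, the cited Exercise~3.9(1) of~\cite{coxrings} will then imply that $X$ is isomorphic to a toric variety.

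The main obstacle is rigorously identifying the output of Brion's construction applied to $\Rs_V$ with $\C[V]$, since Remark~\ref{rem:Cox-ring-vs-sph-skeleton} is phrased for complete spherical varieties whereas $V$ itself is not complete. I expect this gap to be bridgeable by working directly with the explicit presentation: the generators are indexed by $\adiv$ and the relations by $\Sigma$, both encoded in the spherical skeleton, and the classification of multiplicity-free spaces recalled from \cite[Section~2]{gcscsv} should allow one to check—either case by case or uniformly via the decomposition into indecomposable multiplicity-free spaces—that the resulting ring is indeed a polynomial algebra.
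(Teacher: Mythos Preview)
Your argument for (1) $\Rightarrow$ (2) matches the paper's exactly.

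For (2) $\Rightarrow$ (1) you have correctly identified the obstacle: Remark~\ref{rem:Cox-ring-vs-sph-skeleton} is stated for complete spherical varieties, and $V$ is not complete. Your proposed workaround---analyze Brion's presentation of the Cox ring directly on the level of the skeleton $\Rs_V$ and verify, possibly case by case via the classification of multiplicity-free spaces, that the resulting ring is a polynomial ring---is plausible and would likely succeed, but it is laborious and you have not actually carried it out.

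The paper sidesteps this obstacle with a one-line trick: from the multiplicity-free $G'$-space $V \cong \C^{n+1}$ it forms the \emph{complete} spherical $G'$-variety $\Pd^n \coloneqq V/\C^*$ (the $G'$-action descends because it is linear) and observes that $\Rs_{\Pd^n} \cong \Rs_V \cong \Rs_X$. Now both $X$ and $\Pd^n$ are complete, so Remark~\ref{rem:Cox-ring-vs-sph-skeleton} applies directly and yields $\Rm(X) \cong \Rm(\Pd^n)$, which is a polynomial ring; the conclusion then follows from \cite[Exercise~3.9(1)]{coxrings} as you say. In other words, rather than extending Brion's description to the non-complete $V$, one manufactures a complete variety with the same skeleton and known (polynomial) Cox ring. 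This avoids any case analysis.
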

\begin{proof}
  \enquote{$\Rightarrow$}: This direction follows from
  Proposition~\ref{prop:lincox} and Proposition~\ref{prop:qt}

  \enquote{$\Leftarrow$}: As there exists a multiplicity-free
  $G'$-space $V \cong \C^{n+1}$ with $\Rs_X \cong \Rs_V$, we obtain a
  projective space $\Pd^{n} \coloneqq V/\C^*$ with spherical
  $G'$-action such that $\Rs_X \cong \Rs_{\Pd^{n}}$. According to
  Remark \ref{rem:Cox-ring-vs-sph-skeleton}, $\Rm(X) \cong \Rm(\Pd^n)$
  is a polynomial ring, and $X$ is isomorphic to a toric variety.
\end{proof}

\begin{lemma}
  \label{le:c2p}
  Let $Y_2$ be a spherical $G$-variety with $\Gamma(Y_2, \Om_{Y_2}) =
  \C$ such that every $G$-orbit in $Y_2$ is of codimension at most
  $1$. Then there exists a finite epimorphism $G' \to G$ of connected
  algebraic groups and a $G'$-equivariant open embedding $Y_2
  \hookrightarrow Y$ into a projective spherical $G'$-variety $Y$ such
  that $Y \setminus Y_2$ is of codimension at least $2$ in $Y$.
\end{lemma}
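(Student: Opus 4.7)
The plan is to encode $Y_2$ combinatorially via Luna-Vust theory, combinatorially complete the associated colored fan $\Ff_2$ without introducing new rays, and then upgrade the resulting complete embedding to a projective one after passing to a finite isogenous cover $G' \to G$.

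Let $G/H \subseteq Y_2$ denote the open $G$-orbit and let $\Ff_2$ be the colored fan describing $Y_2$. By the orbit-cone correspondence of Luna-Vust, the codimension of a $G$-orbit coincides with the dimension of the corresponding colored cone in $\Ff_2$, so the assumption on codimensions forces every colored cone of $\Ff_2$ to have dimension at most one. I would then translate $\Gamma(Y_2, \Om_{Y_2}) = \C$ into a combinatorial statement via the Brion-Knop description of $B$-semi-invariant global sections of a spherical variety: a weight $\chi \in \Mm$ contributes if and only if it is non-positive on the support of $\Ff_2$ and on every image $\rho'(D)$ for $D \in \Dm$. The hypothesis therefore reads that the rays of $\Ff_2$ together with the points $\rho'(D)$ positively span $\Nm_\Q$; in particular they positively span the valuation cone $\Vm$.

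Given this positive-spanning property, I would extend $\Ff_2$ to a complete colored fan $\Ff'$ by triangulating the complement of its support inside $\Vm$ using only the already existing rays. This is an Ewald-type completion statement for colored fans, whose combinatorial feasibility rests exactly on the positive-spanning property just established. Each newly added colored cone has dimension at least two, and therefore corresponds to a $G$-orbit of codimension at least two, so the resulting complete spherical $G$-variety $Y'$ contains $Y_2$ as an open subvariety with $\operatorname{codim}_{Y'}(Y' \setminus Y_2) \ge 2$. Finally, to force projectivity I would pass to a finite isogenous cover $G' \to G$ chosen so that the rational Picard group of the pulled-back embedding is large enough to accommodate a strictly convex piecewise linear support function on a refinement of $\Ff'$ that subdivides none of the existing rays; the resulting $G'$-variety $Y$ then satisfies all the required properties.

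The main obstacle is the combination of Steps 3 and 4: the completion of $\Ff_2$ and the subsequent projectivization of $\Ff'$ must both be performed \emph{without} introducing any new rays, since any new ray would create a spurious codimension-one $G$-orbit in $Y$ not present in $Y_2$. This is precisely where the finite cover $G' \to G$ becomes essential, allowing enough room in the class group to carry a strictly convex support function on the nose, and I expect the bulk of the technical work to reside in that careful bookkeeping.
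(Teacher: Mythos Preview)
Your approach via colored fans is genuinely different from the paper's, which proceeds through Cox rings. The paper invokes a general fact about Cox rings (requiring precisely the hypothesis $\Gamma(Y_2,\Om_{Y_2})=\C$) that produces at least one projective normal variety $Y$ with the same graded Cox ring as $Y_2$; hence $\overline{Y}=\overline{Y_2}$, and the $\overline{G}$-action on $\overline{Y_2}$ set up earlier in the section restricts to the $\overline{G}$-invariant open subset $\widehat{Y}$ and descends to a $G'$-action on $Y$. Intersecting $\widehat{Y}$ and $\widehat{Y_2}$ inside $\overline{Y}$ gives a common $G'$-invariant open subset $Z$ with complement of codimension at least $2$ in both $Y$ and $Y_2$, and the orbit-codimension hypothesis on $Y_2$ then forces $Z=Y_2$. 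Here the finite epimorphism $G'\to G$ appears for a structural reason having nothing to do with projectivity: the $G$-action on $Y_2$ lifts only to a $G'$-action on the total coordinate space, and it is that lifted action which descends to $Y$.

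Your translation of $\Gamma(Y_2,\Om_{Y_2})=\C$ into a positive-spanning condition and the idea of completing $\Ff_2$ without new rays are sound, but the projectivity step contains a real gap. Passing to a finite isogeny $G'\to G$ does not alter the variety $Y'$ or its class group: the colored fan and the Picard group are intrinsic to $Y'$ and independent of which finite cover of $G$ is declared to act, so the cover cannot \enquote{make room} for a strictly convex support function. Moreover, once an arbitrary complete colored fan $\Ff'$ has been chosen, there is in general no projective refinement avoiding new rays --- already in the toric case there are complete simplicial non-projective fans, and a proper refinement of a simplicial fan must introduce rays. A workable variant of your strategy would be to build a projective completion \emph{directly}, for instance by choosing the higher-dimensional colored cones as a coherent subdivision coming from a polytope with facet normals among the given $\rho'(D)$; but that is neither what you sketch nor routine in the spherical setting. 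The paper's Cox-ring route sidesteps exactly this difficulty: projectivity is supplied for free by GIT, and no completion has to be assembled by hand.
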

\begin{proof}
  According to \cite[Remark~3.3.4.2]{coxrings} (which requires the
  assumption $\Gamma(Y_2, \Om_{Y_2}) = \C$), there exists at least one
  projective (hence complete) normal variety $Y$ with the same
  (graded) Cox ring as $Y_2$, hence the varieties $Y$ and $Y_2$
  coincide up to a subset of codimension at least $2$.  As the Cox
  rings of $Y$ and $Y_2$ coincide, we have $\overline{Y} =
  \overline{Y_2}$. The following commutative diagram illustrates the
  situation.
  \begin{align*}
    \xymatrix{
      \overline{Y_2}\ar@{=}[rr] & & \overline{Y}\\
      \widehat{Y_2} \ar@{^{(}->}[u] \ar[d] & \ar@{_{(}->}[l] 
      \widehat{Z} \coloneqq \widehat{Y_2} \cap \widehat{Y} \ar[d]
      \ar@{^{(}->}[r] & \widehat{Y} \ar@{_{(}->}[u] \ar[d] \\
      Y_2 & \ar@{_{(}->}[l] Z \ar@{^{(}->}[r] & Y
    }
  \end{align*}
  As explained at the beginning of this section, $\overline{Y_2}$
  becomes a spherical $\overline{G}$-variety.  According to
  \cite[Corollary~3.1.4.6]{coxrings} the open subset $\widehat{Y}
  \subseteq \overline{Y} = \overline{Y_2}$ is
  $\overline{G}$-invariant, which naturally makes $Y$ a spherical
  $G'$-variety. We may naturally consider $Y_2$ as a spherical
  $G'$-variety.  The intersection $\widehat{Z}$ as defined in the
  diagram is $\overline{G}$-invariant and an open subset with
  complement of codimension at least $2$ in $\overline{Y}$.  We obtain
  a $G'$-invariant open subset $Z$ of $Y$ and $Y_2$ with complement of
  codimension at least $2$ in $Y$ and $Y_2$ respectively.  As every $G'$-orbit in
  $Y_2$ has codimension at most $1$, we obtain $Y_2 = Z$ and then $Y_2
  \hookrightarrow Y$ as required.
\end{proof}

\begin{definition}
  Let $\Rs \coloneqq (\Sigma, S^p, \Dm^a, \Gamma)$ be a spherical
  skeleton. We define the spherical skeleton $[\Rs] \coloneqq (\Sigma,
  S^p, \Dm^a, [\Gamma])$ where $[\Gamma] \coloneqq \{D \in \Gamma :
  \rho(D) \ne 0 \in \Lambda^*\}$.  For two spherical skeletons
  $\Rs_1$, $\Rs_2$ we write $\Rs_1 \sim \Rs_2$ if $[\Rs_1] \cong
  [\Rs_2]$.
\end{definition}

\begin{remark}
  \label{rem:sim}
  Let $\Rs_1 \sim \Rs_2$ be two spherical skeletons.  It is not
  difficult to see that $\is{\Rs_1} = \is{\Rs_2}$. Moreover,
  $\Rs_1$ is the spherical skeleton of a multiplicity-free
  space if and only if $\Rs_2$ is so: If $D \in \Gamma$
  corresponds to a symbol \enquote{$\gamma$} occurring in the list in
  \cite[Section~2]{gcscsv}, then we have $\rho(D) \ne 0$
  (see also the proof of Proposition~\ref{prop:qt}). Therefore
  any $D \in \Gamma$ with $\rho(D) = 0$ can only come from a trivial factor ($\C^*$ acting
  on $\C$ by scalar multiplication).
\end{remark}

\begin{theorem}
  \label{th:conjeq}
  Conjecture~\ref{conj:esgp} is equivalent to
  Conjecture~\ref{conj:esgp2}
\end{theorem}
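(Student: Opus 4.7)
The equivalence rests on two identities valid for any complete spherical $G$-variety $X$: the equality $\is{X} = \is{\Rs_X}$ recorded after Definition~\ref{def:is}, and the standard formula $\dim X - \rank X = |R^+ \setminus R_{S^p}^+|$ expressing the dimension of the unipotent radical of the parabolic stabilizing the open $B$-orbit in $G/H$. Together with Remark~\ref{rem:sim}, these line up the left- and right-hand sides of the two inequalities as soon as $\Rs \sim \Rs_X$, and they line up the equality conditions via Theorem~\ref{th:vp}, which characterizes $X$ being toric by $\Rs_X \cong \Rs_V$ for some multiplicity-free space $V$.

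The implication Conjecture~\ref{conj:esgp2} $\Rightarrow$ Conjecture~\ref{conj:esgp} is then immediate: apply Conjecture~\ref{conj:esgp2} to the complete spherical skeleton $\Rs_X$ attached to a given complete spherical variety $X$, and transfer both the inequality and the equality case through the two identities and Theorem~\ref{th:vp}.

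For the reverse implication, the plan is, given a complete spherical skeleton $\Rs$, to produce a complete spherical $G'$-variety $X$ with $\Rs_X \sim \Rs$, where $G'$ is a connected reductive group sharing the root system $R$ (possibly $G$ extended by a central torus, which affects neither $R$ nor $|R^+ \setminus R_{S^p}^+|$). Granting such $X$, Remark~\ref{rem:sim} yields $\is{\Rs} = \is{X}$, so Conjecture~\ref{conj:esgp} applied to $X$ delivers the required inequality for $\Rs$. If equality holds, $X$ is toric, so by Theorem~\ref{th:vp} we have $\Rs_X \cong \Rs_V$ for some multiplicity-free $V$, and the $\sim$-invariance from Remark~\ref{rem:sim} of ``being the skeleton of a multiplicity-free space'' transports this back to $\Rs$. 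Conversely, if $\Rs \cong \Rs_V$ for a multiplicity-free space $V$, then a toric projectivization of $V$ constructed as in the proof of Theorem~\ref{th:vp} yields a complete toric variety $X$ with $\Rs_X \cong \Rs$, to which Conjecture~\ref{conj:esgp} applies to force equality.

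The main obstacle is the realization step, i.e.\ producing, from a given complete spherical skeleton $\Rs$, a complete spherical variety $X$ with $\Rs_X \sim \Rs$. My plan is to first realize $(\Sigma, S^p, \Dm^a)$ as the spherically closed spherical system of a spherical subgroup $H \subseteq G'$ using Proposition~\ref{prop:clc}, then choose an augmentation $(\Mm, \rho')$ in which each value $\rho(D) \in \Lambda^*$ with $D \in [\Gamma]$ arises as the restriction to $\Lambda$ of a primitive element of $\Nm = \Hom(\Mm, \Z)$ lying in the valuation cone $\Vm$: the skeleton axiom $\langle \rho(D), \gamma \rangle \le 0$ automatically places $\rho(D)$ in $\Vm$, while primitivity of a lift can be arranged by enlarging $\Lambda$ to $\Mm$ through additional central summands (equivalently, absorbing an auxiliary torus factor into $G$, with those summands chosen orthogonal to all coroots so that the axioms (a1), (a2), ($\sigma 1$), ($\sigma 2$), (s) of the augmentation are automatically satisfied). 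The completeness hypothesis $\cone(\rho(D) : D \in \adiv) = \Lambda^*_\Q$ then guarantees that these primitive lifts together with the colors cover the valuation cone, so they can be assembled into a complete colored fan in $\Nm_\Q$; by the Luna--Vust correspondence this colored fan defines a complete spherical embedding $G'/H \hookrightarrow X$ with $\Rs_X \sim \Rs$, and Lemma~\ref{le:c2p} is on hand in case an intermediate non-projective construction must first be extended to a projective one.
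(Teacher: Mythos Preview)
Your overall strategy is exactly that of the paper: the implication
Conjecture~\ref{conj:esgp2}~$\Rightarrow$~Conjecture~\ref{conj:esgp} is
immediate from $\is{X}=\is{\Rs_X}$, $\dim X-\rank X=|R^+\setminus R^+_{S^p}|$,
and Theorem~\ref{th:vp}; and for the converse one realizes a given complete
skeleton $\Rs$ by a complete spherical variety $Y$ with $\Rs_Y\sim\Rs$ and
then applies Conjecture~\ref{conj:esgp} to $Y$.

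However, your realization step has a genuine gap. After enlarging $\Lambda$
to $\Mm=\Lambda\oplus\Z^{\Gamma}$ (or any nontrivial central extension) so as
to make the lifts $\rho'(D)=(\rho(D),e_D)\in\Nm$ primitive and pairwise
distinct, the valuation cone in $\Nm_\Q$ becomes
$\Vm_\Nm=\Vm_\Lambda\times\Q^{\Gamma}$, whereas every ray you have constructed
(colors $(\rho(D),0)$ and $G$-invariant rays $(\rho(D),e_D)$) lies in the
half-space where each $e_D$-coordinate is nonnegative. Hence the cone they
generate cannot contain a point of $\Vm_\Nm$ with some $e_D$-coordinate
negative, so these rays do \emph{not} cover the valuation cone, and no
complete colored fan can be built from them alone. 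The completeness
hypothesis $\cone(\rho(D):D\in\Delta)=\Lambda^*_\Q$ is a statement in
$\Lambda^*_\Q$, not in $\Nm_\Q$, and does not survive the lift.

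The paper does not attempt to assemble a complete fan. Instead it first
builds the toroidal embedding $Y_1$ consisting only of codimension~$\le 1$
orbits (one ray per $D\in\Gamma$), then adjoins finitely many further
primitive rays lying in $\Sigma^\perp\cap\Nm$ so that all rays together span
$\Nm_\Q$ as a convex cone; this yields $Y_2$, still consisting of orbits of
codimension~$\le 1$, but now with $\Gamma(Y_2,\Om_{Y_2})=\C$, and with
$\Rs_{Y_2}\sim\Rs$ (the extra rays restrict to $0$ on $\Lambda$, so
$[\Rs_{Y_2}]=[\Rs]$). Only then is Lemma~\ref{le:c2p} invoked, and its role is
precisely to \emph{complete} $Y_2$ to a projective $Y$ with $Y\setminus Y_2$
of codimension~$\ge 2$, hence $\Rs_Y=\Rs_{Y_2}$: it is a Cox-ring argument
that produces the missing higher-dimensional colored cones without introducing
new divisors. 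Your description of Lemma~\ref{le:c2p} as a tool to pass from a
non-projective complete variety to a projective one misses this point; you
never actually arrive at anything complete before invoking it.
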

\begin{proof}
  By Theorem \ref{th:vp} and the fact that $\dim X - \rank X = \dim
  G/P = |R^+ \setminus R_{S^p}^+|$ where $P$ is the stabilizer of the
  open $B$-orbit in $X$ (see~\cite[Theorem~6.6]{knopsph}), it is clear
  that Conjecture~\ref{conj:esgp2} implies Conjecture~\ref{conj:esgp}.
  It remains to show the other implication.

  Assume Conjecture~\ref{conj:esgp}, let $R$ be a root system, and let
  $\Rs \coloneqq (\Sigma, S^p, \Dm^a, \Gamma)$ be a complete spherical
  skeleton. Our goal is to construct a complete spherical $G'$-variety
  $Y$ for some connected reductive group $G'$ with root system $R$
  such that $\Rs_Y \sim \Rs$.

  We denote by $G^{ss}$ the semisimple simply-connected algebraic
  group with root system $R$. We first construct an augmentation of
  the spherical system $\Ss \coloneqq (\Sigma, S^p, \Dm^a)$ for the
  reductive group $G \coloneqq G^{ss} \times (\C^*)^\gdiv$, where both
  the character and cocharacter lattices of $(\C^*)^\gdiv$ are
  naturally identified with $\Z^\gdiv$.  Let $\Mm \coloneqq \Lambda
  \oplus \Z^\gdiv \subseteq \Xf(B)$, $\Nm \coloneqq \Hom(\Mm, \Z)
  \cong \Hom(\Lambda, \Z) \oplus \Z^\gdiv$, and $\rho'\colon \bdiv \to
  \Nm$, $D \mapsto (\rho(D), 0)$.  Then $(\Mm, \rho')$ is an
  augmentation of $\Ss$ for $G$, and we denote the corresponding
  spherical homogeneous space by $O$.

  The elements $\rho'(D) \coloneqq (\rho(D), e_D) \in \Nm$ for $D \in
  \Gamma$, where $(e_D)_{D\in\gdiv}$ denotes the standard basis of
  $\Z^\gdiv$, are primitive and distinct, hence correspond to certain
  $G$-orbits of codimension 1, which can be added to $O$ resulting in
  a spherical embedding $O \hookrightarrow Y_1$.  It is clear that
  $\Rs_{Y_1} = \Rs$. As $\Rs$ is complete, we can choose finitely many
  primitive elements in $\Sigma^\perp \cap \Nm$ which, together with
  the $\rho'(D)$ for $D \in \adiv$, span the whole space $\Nm_\Q$ as
  convex cone. The corresponding $G$-orbits of codimension 1 can be
  further added to $Y_1$, resulting in a spherical variety $Y_2$ such
  that $Y_1 \hookrightarrow Y_2$.  Then we have $\Rs_{Y_2} \sim
  \Rs_{Y_1} = \Rs$, and $Y_2$ still consists of $G$-orbits of
  codimension at most 1.  Moreover, we have $\Gamma(Y_2, \Om_{Y_2}) =
  \C$, which follows from the fact that the $\rho'(D)$ for $D \in
  \adiv$ span the whole space $\Nm_\Q$ as convex cone.  Hence we may
  use Lemma~\ref{le:c2p} to obtain a completion $Y_2 \hookrightarrow
  Y$ with $\Rs_Y = \Rs_{Y_2} \sim \Rs$

  According to Remark~\ref{rem:sim} and Theorem~\ref{th:vp}, we have
  \begin{align*}
    \is{\Rs} = \is{\Rs_Y} = \is{Y} \le \dim Y - \rank Y = |R^+
    \setminus R_{S^p}^+|
  \end{align*}
  where equality holds if and only if $\Rs$ is isomorphic to the
  spherical skeleton of a multiplicity-free space.
\end{proof}

\section{A smoothness criterion}

Before giving the proof of Theorem~\ref{th:smoothness}, we explain how
it can be used to determine the smoothness of an arbitrary spherical
variety $X$ when the combinatorial data is known. This is a straightforward
translation of \cite[Proposition~6.1]{camus} (see also \cite[Section~3]{gcscsv})
into the setting of spherical skeletons.

\newcommand{\lsym}{I}

\begin{definition}
  Let $\Rs \coloneqq (\Sigma, S^p, \Dm^a, \gdiv)$ be a spherical
  skeleton and let $\lsym \subseteq \adiv$ be a subset.  We set
  $S_\lsym \coloneqq \{\alpha \in S : \Dm(\alpha) \subseteq \lsym\}$
  and denote by $R_\lsym$ the root system generated by $S_\lsym$.  We
  define
  \begin{align*}
    \Sigma_\lsym &\coloneqq \Sigma \cap \lspan_\Z S_\lsym\text{,}\\
    S^p_\lsym &\coloneqq S^p \cap S_\lsym\text{,}\\
    \Dm^a_\lsym &\coloneqq \{D \in \Dm^a : \text{there exists $\alpha
      \in
      \Sigma_\lsym$ with $D \in \Dm(\alpha)$}\}\text{.}
  \end{align*}
  Then $\Ss_\lsym \coloneqq (\Sigma_\lsym, S^p_\lsym, \Dm^a_\lsym)$ is a spherically
  closed spherical system for the root system $R_\lsym$, and, as usual, we denote the full set
  of colors by $\bdiv_\lsym$. We set $\gdiv_\lsym \coloneqq \lsym \setminus \Dm_\lsym$.
  The spherical skeleton $\Rs_{\lsym} \coloneqq (\Sigma_\lsym,
  S^p_\lsym, \Dm^a_\lsym, \gdiv_\lsym)$ is called the
  \emph{localization} of $\Rs$ at $\lsym$.
\end{definition}

If $X$ is an arbitrary spherical variety and $Y \subseteq X$ is a
$G$-orbit, then the slice $Z$ provided by the local structure theorem
for spherical varieties satisfies the assumptions of
Theorem~\ref{th:smoothness} and has spherical skeleton $\Rs_Z =
(\Rs_{X})_\lsym$ where $\lsym \subseteq \adiv$ is the set of prime
divisors $D$ such that $Y \subseteq \overline{D}$. It follows that
Theorem~\ref{th:smoothness} implies the following result (for details,
we refer to \cite{gcscsv}):

\begin{theorem}
  Let $X$ be a spherical variety, and let $Y \subseteq X$ be a
  $G$-orbit. We denote by $\lsym \subseteq \adiv$ the set of prime
  divisors $D$ such that $Y \subseteq \overline{D}$. If
  Conjecture~\ref{conj:esgp} holds, then $X$ is smooth along $Y$ if
  and only if $\is{\Rs_{\lsym}} = |R_{\lsym}^+ \setminus
  R^+_{S^p_{\lsym}}|$. \hfill \qed
\end{theorem}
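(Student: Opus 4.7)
The plan is to reduce the theorem to Theorem~\ref{th:smoothness} by invoking the local structure theorem for spherical varieties at the orbit $Y$. This yields a $P$-saturated affine open neighborhood $X_Y \subseteq X$ of $Y$ together with a $P^u$-equivariant isomorphism $X_Y \cong P^u \times Z$, where $P$ is the stabilizer of the open $B$-orbit in $X$, $P^u$ its unipotent radical, $L \subseteq P$ a Levi factor, and $Z \subseteq X_Y$ an $L$-stable affine slice on which $[L,L]$ fixes pointwise the unique closed $L$-orbit. Because $P^u$ is isomorphic as a variety to an affine space, $X$ is smooth along $Y$ if and only if $Z$ is smooth; thus the whole question is transported from the arbitrary spherical variety $X$ to the affine slice $Z$, which is precisely the kind of variety to which Theorem~\ref{th:smoothness} applies.

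The identification $\Rs_Z = (\Rs_X)_\lsym = \Rs_\lsym$ recorded in the paragraph preceding the statement is the combinatorial translation of the local structure theorem: the spherical roots, colors and $L$-invariant prime divisors of $Z$ correspond respectively to the $\gamma \in \Sigma$ with $\Supp(\gamma) \subseteq S_\lsym$, the colors in $\Dm^a_\lsym$, and the elements of $\gdiv_\lsym = \lsym \setminus \Dm_\lsym$, and one verifies that the Luna formulae for the $m_D$ are preserved. Consequently
\begin{align*}
  \is{Z} = \is{\Rs_Z} = \is{\Rs_\lsym}\text{,}
\end{align*}
while Knop's rank formula (already invoked in the proof of Theorem~\ref{th:conjeq}) gives $\dim Z - \rank Z = |R^+_\lsym \setminus R^+_{S^p_\lsym}|$. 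Together, the two identities turn the combinatorial equality in the statement into the equality $\is{Z} = \dim Z - \rank Z$ that appears in Theorem~\ref{th:smoothness}.

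It remains to apply Theorem~\ref{th:smoothness} to $Z$. The forward direction causes no difficulty: if $X$ is smooth along $Y$ then $Z$ is smooth, hence locally factorial, so Theorem~\ref{th:smoothness} produces the desired equality for $\Rs_\lsym$. The main obstacle is the converse, because the local-factoriality hypothesis of Theorem~\ref{th:smoothness} is not given to us a priori on $Z$. I would bypass this by combining the combinatorial equality with the equivalent formulation Conjecture~\ref{conj:esgp2} and Theorem~\ref{th:vp}: under the assumed conjecture, the equality $\is{\Rs_\lsym} = |R^+_\lsym \setminus R^+_{S^p_\lsym}|$ forces $\Rs_Z$ to coincide, up to the relation $\sim$ of Section~\ref{sec:cox-rings-mult} and a trivial torus factor, with the spherical skeleton $\Rs_V$ of some multiplicity-free $L'$-space $V$. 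An affine adaptation of the Cox-ring argument used in Proposition~\ref{prop:qt} and Theorem~\ref{th:vp} (exploiting Remark~\ref{rem:Cox-ring-vs-sph-skeleton} to recover the ring structure of $\Om(Z) = \Rm(Z)$ from $\Rs_Z$) then identifies $Z$ $L$-equivariantly with $V$, making $Z$ smooth and in particular locally factorial. Verifying that the machinery of Section~\ref{sec:cox-rings-mult} survives this passage from the complete to the affine setting is where the real technical work would lie.
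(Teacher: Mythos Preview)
Your overall strategy coincides with the paper's: reduce to the affine slice $Z$ via the local structure theorem, identify $\Rs_Z = \Rs_\lsym$, and invoke Theorem~\ref{th:smoothness}. The paper's argument is in fact just the sentence preceding the statement together with the reference to \cite{gcscsv}; your write-up of the reduction is more explicit but not different in substance.

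Where you diverge is in handling the converse. You are right that local factoriality of $Z$ is not given for free, and this is a genuine issue the paper glosses over (indeed, for a singular affine toric variety one has $\Sigma=\emptyset$, hence $\is{\Rs_\lsym}=0=|R_\lsym^+\setminus R^+_{S^p_\lsym}|$ trivially, yet the variety is not smooth; so a local-factoriality hypothesis really is needed somewhere). However, your proposed workaround does not close the gap. You write $\Om(Z)=\Rm(Z)$ in order to recover the coordinate ring from the skeleton, but this equality presupposes $\Cl(Z)=0$, i.e.\ factoriality of $Z$ --- precisely the hypothesis you are trying to circumvent. Moreover, Remark~\ref{rem:Cox-ring-vs-sph-skeleton} and the machinery of Section~\ref{sec:cox-rings-mult} are stated for complete varieties and do not transfer to the affine setting without further argument; and even granting all that, the skeleton does not determine the augmentation, so one cannot expect to recover $Z$ itself (only its Cox ring at best).

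The paper's route is shorter: it cites \cite[Proposition~6.2]{camus} and \cite[Proposition~3.3]{gcscsv} directly for the equivalence ``$Z$ smooth $\Leftrightarrow$ $\Rs_Z$ is the skeleton of a multiplicity-free space'' (under local factoriality), then applies Conjecture~\ref{conj:esgp2}. No Cox-ring detour is needed. The honest resolution of the gap you spotted is simply to carry the local-factoriality hypothesis along, as the full criterion in \cite{gcscsv} does, rather than to try to manufacture it from the skeleton equality.
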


We now give the proof of Theorem~\ref{th:smoothness}. Let $X$ be a
locally factorial affine spherical $G$-variety such that the derived
subgroup $[G, G]$ fixes pointwise the unique closed $G$-orbit in $X$.

\begin{lemma}
  The spherical skeleton $\Rs_X$ is complete.
\end{lemma}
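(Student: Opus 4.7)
The strategy is to show that the weight monoid
$\Pi_X \coloneqq \{\chi \in \Mm : \langle \rho'(D), \chi\rangle \ge 0 \text{ for every } D \in \adiv\}$
intersects $\Lambda$ only in $\{0\}$; by duality (after clearing denominators) this is equivalent to the asserted equality $\cone(\rho(D) : D \in \adiv) = \Lambda^*_\Q$. We use here that every prime divisor of a spherical variety is $B$-invariant (each being the closure of a codimension-$1$ $B$-orbit), so that a $B$-semi-invariant rational function $f_\chi \in \C(X)$ is regular on $X$ if and only if $\chi \in \Pi_X$.

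Suppose for contradiction that some nonzero $\chi \in \Lambda \cap \Pi_X$ exists, producing a nonzero $B$-eigenfunction $f_\chi \in \C[X]$. Because every element of $\Sigma$ is an integer combination of simple roots, $\Lambda \subseteq \Xf(R)$ sits inside the root lattice of $G$, and the restriction from $\Xf(R)$ to the character group of $T \cap [G,G]$ is injective. Hence $\chi$ is nontrivial on $T \cap [G,G]$, and the hypothesis that $[G,G]$ fixes $Y$ pointwise then forces $f_\chi|_Y = 0$.

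The main obstacle is to turn this into a contradiction, which is where local factoriality enters essentially. One concrete route is via Luna's \'etale slice theorem at a point $y \in Y$: \'etale-locally $X \cong G \times^{G_y} N$, with $N \coloneqq T_y X / T_y Y$ a $G_y$-representation and $[G,G] \subseteq G_y$. Sphericity of $X$ makes $N$ a multiplicity-free $G_y$-module, and Frobenius reciprocity identifies the submodule $V(\chi) \subseteq \C[X]$ with a nonzero $G_y$-equivariant map into $\mathrm{Sym}(N^*)$; the vanishing of $f_\chi$ on $Y$ forces this map to land in $\mathrm{Sym}^{\ge 1}(N^*)$. Local factoriality together with affineness forces $X$ to be smooth, pinning down $(G_y, N)$ as one of the multiplicity-free representations classified in \cite[Section~2]{gcscsv}, and inspection of that list rules out any $T$-weight in $\Lambda \setminus \{0\}$ appearing in $\mathrm{Sym}^{\ge 1}(N^*)$. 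A more conceptual alternative is to invoke the theorem (Panyushev) that the weight monoid of a smooth affine spherical variety is free, with generators avoiding the root lattice by the combinatorics of the associated spherical system, giving $\Pi_X \cap \Lambda = \{0\}$ directly.
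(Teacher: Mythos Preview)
Your reformulation via duality is correct: completeness of $\Rs_X$ is equivalent to $\Pi_X \cap \Lambda = \{0\}$, and your argument that any nonzero $\chi \in \Pi_X \cap \Lambda$ produces a regular $B$-eigenfunction $f_\chi$ vanishing on $Y$ is fine. The gap is in the final step. Both routes you propose hinge on $X$ being smooth, and the claim ``local factoriality together with affineness forces $X$ to be smooth'' is false. Indeed, Theorem~\ref{th:smoothness} immediately following this lemma is precisely a criterion distinguishing the smooth $X$ among the locally factorial ones satisfying the same hypotheses; if your implication held, that theorem would be vacuous. Luna's slice theorem in the linear form $G \times^{G_y} N$ you invoke is only available at smooth points, and Panyushev's freeness of the weight monoid likewise assumes smoothness. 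Even granting smoothness, the assertions ``inspection of that list rules out any $T$-weight in $\Lambda \setminus \{0\}$'' and ``generators avoiding the root lattice'' are not justified and would require a nontrivial case analysis.

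The paper's proof is entirely different and, notably, does not use local factoriality at all. It works with the colored cone $(\Cm, \Fm)$ of the closed orbit $Y$: the hypothesis that $[G,G]$ fixes $Y$ pointwise forces $S^p_Y = S$, which via \cite[Theorem~1.1]{gh15} gives $\Fm = \bdiv$, so that $\Cm = \cone(\rho'(D) : D \in \adiv)$. Projecting to $\Lambda^*_\Q$ and using Brion's inclusion $-\overline{\Vm} \subseteq \cone(\rho(D) : D \in \bdiv)$ from \cite[Lemma~2.1.2]{brcox}, a short convex-geometric argument (picking $u \in \overline{\Cm}{}^\circ \cap \overline{\Vm}{}^\circ$, so that $-u$ lies in the interior of the cone spanned by the colors) shows that $\overline{\Cm}$ is all of $\Lambda^*_\Q$. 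Your opening observation $f_\chi|_Y = 0$ does not enter; the argument stays on the colored-fan side.
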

\begin{proof}
  As $X$ is affine, it contains exactly one closed
  $G$-orbit $Y$, which corresponds to a
  colored cone $(\Cm, \Fm)$ (see, for instance,
  \cite[Theorem 6.7]{knopsph}).
  By assumption, $[G,G]$ is contained in
  the stabilizer of the open $B$-orbit in $Y$, hence $S_Y^p =
  S$ where $(\Sigma_Y, S^p_Y, \Dm^a_Y)$ is the spherically closed
  spherical system associated to $Y$.
  By \cite[Theorem~1.1]{gh15}, we have $S_Y^p = \{ \alpha \in S :
  \Dm(\alpha) \subseteq \Fm \}$, hence $\Fm = \Dm$, which means that
  $Y$ is contained in the closure of every $B$-invariant prime divisor
  in $X$. It follows that we
  have $\Cm = \cone(\rho'(D) : D \in \adiv) \subseteq \Nm_\Q$.
  We denote by $\overline{\Cm}$
  (resp.~by $\overline{\Vm}$) the image of $\Cm$ (resp.~of $\Vm$) in
  $\Lambda^*_\Q$. Then we have $\overline{\Cm} = \cone(\rho(D):D \in
  \adiv)$. As $\Cm^\circ \cap \Vm \ne \emptyset$
  and $\Cm^\circ$ is mapped to $\overline{\Cm}{}^\circ$,
  we obtain $\overline{\Cm}{}^\circ \cap \overline{\Vm} \ne \emptyset$.
  According to
  \cite[Lemma~2.1.2]{brcox}, we have $-\overline{\Vm} \subseteq \Em
  \coloneqq \cone(\rho(D):D\in\bdiv)$. In particular, the cones $\Em$
  and $\overline{\Cm}$ are full-dimensional in $\Lambda^*_\Q$, so that
  we in particular obtain
  $\overline{\Cm}{}^\circ \cap \overline{\Vm}{}^\circ
  = \topint(\overline{\Cm}) \cap \topint(\overline{\Vm}) \ne
  \emptyset$. Let $u \in \overline{\Cm}{}^\circ \cap
  \overline{\Vm}{}^\circ$. Then we have $u \in -\Em^\circ$, hence
  $\overline{\Cm} \supseteq \cone(\Em \cup\{u\}) = \Lambda^*_\Q$, \ie
  $\Rs_X$ is complete.
\end{proof}

\begin{proof}[Proof of Theorem~\ref{th:smoothness}]
  It follows from \cite[Proposition~6.2]{camus}, see also
  \cite[Proposition~3.3]{gcscsv}, that $X$ is smooth if and only if
  $\Rs_X$ is isomorphic to the spherical skeleton of a
  multiplicity-free space.  According to Conjecture~\ref{conj:esgp2},
  this is equivalent to $\is{X} = \dim X - \rank X$.
\end{proof}

\section{Symmetric varieties: the inequality}
\label{sec:symmetric}

The purpose of the following three sections is to prove
Theorem~\ref{th:sym}. In fact, we prove Conjecture~\ref{conj:esgp2}
for spherical skeletons coming from symmetric varieties.

Let $H \subseteq G$ be a symmetric subgroup. By the definition of a
symmetric subgroup, there exists an involution $\theta \colon G \to G$
with $(G^\theta)^\circ \subseteq H \subseteq N_G(G^\theta)$. According to
\cite[Remarque~2.1]{vust90}, we may assume $G = G^{ss} \times C$ where
$G^{ss}$ is semisimple simply-connected and $C$ is a torus as well as
$\theta(G^{ss}) = G^{ss}$ and $\theta(c)= c^{-1}$ for every $c \in
C$. Then, in particular, $C^\theta = \{e\}$.

\begin{prop}
  \label{prop:symcl}
  The spherical closure $\overline{H}$ is a symmetric subgroup of $G$.
\end{prop}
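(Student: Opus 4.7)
The first inclusion $(G^\theta)^\circ \subseteq \overline{H}$ is immediate from $(G^\theta)^\circ \subseteq H \subseteq \overline{H}$, so the task is to establish $\overline{H} \subseteq N_G(G^\theta)$. The plan is to prove the stronger equality $\overline{H} = N_G(G^\theta)$ via Proposition~\ref{prop:clc}, which would yield that $\overline{H}$ is symmetric for the same involution $\theta$.

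Two claims support this plan. First, $N_G(G^\theta)$ is itself spherically closed in $G$: this is a classical fact from the theory of wonderful compactifications, where $N_G(G^\theta)$ arises as the stabilizer of a generic point in the unique closed orbit of the wonderful completion of $G/G^\theta$, and the induced action of $N_G(G^\theta)/G^\theta$ on the set of colors is faithful. Second, $H$ and $N_G(G^\theta)$ give rise to the same spherically closed spherical system $\Ss = (\Sigma, S^p, \Dm^a)$: the spherical roots $\Sigma$ depend only on the valuation cone, $S^p$ only on the parabolic stabilizing the open $B$-orbit, and the abstract set $\Dm^a$ together with the map $\rho$ is determined combinatorially by the involution $\theta$ (the roles of the various subgroups in the chain only affect the augmentation $(\Mm, \rho')$, not the spherically closed spherical system itself). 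With both claims in hand, the uniqueness part of Proposition~\ref{prop:clc} forces $\overline{H} = N_G(G^\theta)$, and the conclusion follows.

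The main obstacle is the second claim. The subtle point is that in the reduction $G = G^{ss} \times C$ with $\theta(c) = c^{-1}$, the quotient $N_G(G^\theta)/(G^\theta)^\circ$ may be positive-dimensional owing to the torus factor $C$, so the morphism $G/H \to G/N_G(G^\theta)$ is not \'etale in general and colors cannot be transported by na\"\i ve pullback. Careful bookkeeping is required, exploiting that $C$ centralizes $G^{ss}$ and that $\theta$ acts on $C$ by inversion, in order to confirm that as $H$ varies in the chain $(G^\theta)^\circ \subseteq H \subseteq N_G(G^\theta)$, only $(\Mm, \rho')$ changes while $(\Sigma, S^p, \Dm^a)$ is unchanged. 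As a fallback, the claim can alternatively be checked by inspecting the list of Luna diagrams for symmetric spaces in the appendix case by case.
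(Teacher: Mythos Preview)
Your strategy aims to prove the stronger equality $\overline{H} = N_G(G^\theta)$, but this equality is false in general, so the approach cannot succeed. Take $G = \SL(2)$ with $\theta$ given by conjugation by $\operatorname{diag}(1,-1)$, so that $G^\theta = T$ is the diagonal torus and $N_G(G^\theta) = N(T)$. The nontrivial element of $N(T)/T$ swaps the two colors of $\SL(2)/T$, hence the kernel of the action on colors is $T$ itself and $\overline{T} = T \neq N(T)$. Correspondingly, your Claim~2 fails: the spherically closed spherical system of $\SL(2)/T$ has $\Sigma = \{\alpha\}$ and $|\Dm^a| = 2$ (entry No.~3 with $l=1$, $m=0$ in Appendix~\ref{sec:luna-diagr-symm}), while that of $\SL(2)/N(T)$ has $\Sigma = \{2\alpha\}$ and $\Dm^a = \emptyset$ (entry No.~4 with $m=0$). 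The same phenomenon recurs whenever the appendix lists both a subgroup and its normalizer for the same involution (compare Nos.~3 and 4, 12 and 13, 16/2 and 17, 22 and 23), so your proposed fallback of a case-by-case inspection would itself expose the failure rather than rescue the argument.

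The paper's proof avoids this by establishing only the inclusion. After reducing to $G$ semisimple, an exact sequence due to de~Concini and Procesi shows that $G^\theta$ has finite index in $N_G(G^\theta)$; combined with the general fact $N_G(K) = N_G(K^\circ)$ for spherical $K$, this yields $N_G(N_G(G^\theta)) = N_G(G^\theta)$, and then $\overline{H} \subseteq \overline{N_G(G^\theta)} \subseteq N_G(N_G(G^\theta)) = N_G(G^\theta)$.
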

\begin{proof}
  As $C \subseteq \overline{H}$, $\theta(G^{ss}) = G^{ss}$, and
  $\theta(C) = C$, in order to show that $\overline{H}$ is symmetric,
  we may assume $G=G^{ss}$. By \cite[1.7, Lemma ii)]{cp83}, we obtain
  an exact sequence
  \begin{align*}
    1 \to G^\theta \to N_G(G^\theta) \to \{ g \theta(g)^{-1} : g \in G
    \} \cap Z(G) \to 1.
  \end{align*}
  As $G=G^{ss}$, the center $Z(G)$ is finite, and hence $G^\theta$ has
  finite index in $N_G(G^\theta)$ (see also \cite[1.7, Remark
  a)]{cp83}). Then we have $(N_G(G^\theta))^\circ = G^\theta$ and, as
  $N_G(K^\circ) = N_G(K)$ for every spherical subgroup $K \subseteq
  G$,
  \begin{align*}
    N_G(N_G(G^\theta)) = N_G((N_G(G^\theta))^\circ) =
    N_G(G^\theta)\text{,}
  \end{align*}
  hence $\overline{H} \subseteq \overline{N_G(G^\theta)} \subseteq N_G(N_G(G^\theta)) = N_G(G^\theta)$.
\end{proof}

\begin{cor}
  \label{cor:symprod}
  The quotient $G / \overline{H}$ can be written as a product of
  finitely many factors of the following form:
  \begin{enumerate}
  \item $(G' \times G') / G'$ where $G'$ is a simple adjoint group and
    diagonally embedded in $G'\times G'$.
  \item $G' / H'$ where $G'$ is a simple group and $H'$ is a
    spherically closed symmetric subgroup.
  \end{enumerate}
\end{cor}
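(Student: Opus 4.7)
The plan is to exploit the action of $\theta$ on the simple factors of $G^{ss}$ to decompose $G/\overline{H}$ as a product of the required type. By Proposition~\ref{prop:symcl} we already know $\overline{H}$ is a symmetric subgroup with respect to $\theta$, so $(G^\theta)^\circ\subseteq\overline{H}\subseteq N_G(G^\theta)$. First I would handle the torus: since $C$ is central in $G=G^{ss}\times C$, it normalizes $H$ and acts trivially on the colors of $G/H$; as the spherical closure of $H$ consists precisely of those elements of $N_G(H)$ whose action on colors is trivial, we conclude $C\subseteq\overline{H}$, so $G/\overline{H}\cong G^{ss}/(\overline{H}\cap G^{ss})$ and we may assume $G=G^{ss}$ is semisimple and simply connected.

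Next, I would decompose $G^{ss}=\prod_{i=1}^k G_i$ into simple simply-connected factors. The involution $\theta$ permutes these factors, partitioning them into singleton orbits and paired orbits $\{G_i,G_j\}$ with $\theta(G_i)=G_j$. One verifies that the product decomposition of $G^{ss}$ carries over to $(G^{ss})^\theta$, to its normalizer in $G^{ss}$, and to the intermediate subgroup $\overline{H}$; the factor of $\overline{H}$ corresponding to each orbit is itself a spherically closed symmetric subgroup of the product over that orbit. For a singleton orbit $\{G_i\}$ the corresponding factor of $G/\overline{H}$ takes the form $G_i/H'_i$, with $H'_i$ a spherically closed symmetric subgroup of the simple group $G_i$, giving a factor of type~(2).

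For a paired orbit $\{G_i,G_j\}$, the fixed subgroup $(G_i\times G_j)^\theta$ is the twisted diagonal $\{(g,\theta(g)):g\in G_i\}$, isomorphic to $G_i$ via the first projection; under the identification $G_j\cong G_i$ induced by $\theta$, this becomes the ordinary diagonal $G_{i,\mathrm{diag}}\subseteq G_i\times G_i$. A direct calculation gives $N_{G_i\times G_i}(G_{i,\mathrm{diag}})=G_{i,\mathrm{diag}}\cdot Z(G_i\times G_i)$, and since the extra central factor acts trivially on the colors of the group-case spherical homogeneous space $(G_i\times G_i)/G_{i,\mathrm{diag}}$, this normalizer is itself the spherical closure. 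Hence this factor of $G/\overline{H}$ is $(G_i\times G_i)/(G_{i,\mathrm{diag}}\cdot Z(G_i\times G_i))\cong(G'\times G')/G'$ with $G'=G_i/Z(G_i)$ simple adjoint, yielding a factor of type~(1).

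The main obstacle is justifying that $\overline{H}$ actually respects the product decomposition induced by the $\theta$-orbits of the simple factors of $G^{ss}$, \ie that taking spherical closure commutes with this decomposition. This reduces to the observation that the colors of a product of spherical homogeneous spaces decompose as a disjoint union of the colors of the factors, on which the normalizer acts separately; combined with the central-normalizer computation above, this produces the asserted product decomposition of $G/\overline{H}$.
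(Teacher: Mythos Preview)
Your overall strategy matches the paper's: reduce to $G$ semisimple simply-connected, decompose along the $\theta$-orbits of the simple factors, and handle singleton versus paired orbits separately. The delicate point, which you correctly flag as ``the main obstacle'', is showing that $\overline{H}$ itself respects this product decomposition. The paper handles this by observing that the spherically closed spherical system of $N_G(G^\theta)=N_G(\overline{H})$ is a product and then invoking \cite[Lemma~2.4.1]{bp}, which says that the spherical system of a spherically closed subgroup decomposes whenever that of its normalizer does; this immediately yields the product decomposition $\overline{H}=\overline{H}_1\times\cdots\times\overline{H}_k$.

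Your color argument aims at the same conclusion but, as phrased, presupposes you already have a product of spherical homogeneous spaces whose colors you can split. The variety $G/H$ is not known to be such a product, and if you instead use $G/G^\theta$ (which \emph{is} a product) you are computing the spherical closure of $G^\theta$ rather than of $H$; these need not coincide a priori, as the example $T\subseteq N(T)\subseteq\SL_2$ shows that subgroups with the same connected component can have different spherical closures. To make your sketch rigorous you would need to argue directly that the colors of $G/H$ partition according to $S=\bigsqcup_\sigma S_\sigma$ and that each factor of $N_G(H)=\prod_\sigma N_{G_\sigma}((G_\sigma)^\theta)$ acts trivially on colors outside its own block --- which is essentially what the cited lemma from \cite{bp} establishes.

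For the paired-orbit case the paper takes a slightly different route: rather than computing the normalizer in the simply-connected setting, it first passes to the adjoint quotient (permissible since $\overline{H}_i$ contains the center) and then cites \cite[Section~2.2, Lemme~1]{vust90} for $N_{G'_i\times G'_i}(G'_{i,\mathrm{diag}})=G'_{i,\mathrm{diag}}$, which forces $\overline{H}_i=G'_{i,\mathrm{diag}}$ directly. Your computation of the normalizer as $G_{i,\mathrm{diag}}\cdot Z(G_i\times G_i)$ and subsequent quotient by the center reaches the same conclusion.
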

\begin{proof}
  As $\overline{H}$ contains the center of $G$, we may assume that $G$
  is semisimple simply-connected.  By Proposition \ref{prop:symcl},
  the spherical closure $\overline{H} \subseteq G$ is symmetric, \ie
  there exists an involution $\theta \colon G \to G$ with $G^\theta
  \subseteq \overline{H} \subseteq N_G(G^\theta)$, as we have
  $(G^\theta)^\circ = G^\theta$ for simply-connected $G$ according to
  \cite[8.2]{st68}.

  The involution $\theta$ either swaps two simple factors of $G$ or
  can be restricted to an involution on a simple factor of $G$.  Hence
  we can write $G = G_1 \times \dots \times G_k$ and $G^\theta =
  G^\theta_1 \times \dots \times G^\theta_k$ where for every $G_i$
  either $G_i$ is simple or $G_i = G'_i \times G'_i$ with $G'_i$
  simple and $G_i^\theta \cong G'_i$ diagonally embedded in $G'_i \times
  G'_i$. We also obtain $N_G(G^\theta) = N_G(G^\theta_1) \times \dots
  \times N_G(G^\theta_k)$.
  
  In particular, the spherically closed spherical system of
  $N_G(G^\theta)$ can be written as a product $\Ss = \Ss_1 \times
  \dots \times \Ss_k$.  As $N_G(G^\theta) = N_G(\overline{H})$, it
  follows from \cite[Lemma~2.4.1]{bp} that the spherical system of
  $\overline{H}$ can also be written as a product $\Ss' = \Ss'_1
  \times \dots \times \Ss'_k$. Hence we also have a product
  decomposition $\overline{H} = \overline{H}_1 \times \dots \times
  \overline{H}_k$.
  
  In the case $G_i = G'_i \times G'_i$ with $G'_i$ simple, we may
  assume that $G'_i$ is adjoint as $\overline{H}_i$ contains the
  center of $G_i$. Then we have $G_i^\theta = G'_i = N_{G_i}(G'_i) =
  N_{G_i}(G_i^\theta)$ (see \cite[Section~2.2, Lemme~1]{vust90}) with
  $G'_i \hookrightarrow G'_i \times G'_i$ diagonally embedded, hence
  $\overline{H}_i = G'_i$.
\end{proof}

\begin{definition}
  \label{def:symm-sph-sys}
  A spherically closed spherical system $\Ss$ is called
  \emph{symmetric} if it is the spherically closed spherical system
  associated to a symmetric subgroup. A spherical skeleton $\Rs$ is
  called \emph{symmetric} if the underlying spherically closed
  spherical system is symmetric.
\end{definition}

\begin{cor}
  \label{cor:sym-sph-sys-prod}
  Every symmetric spherical system $\Ss$ can be written as a product
  of symmetric spherical systems appearing in
  Appendix~\ref{sec:luna-diagr-symm}.
\end{cor}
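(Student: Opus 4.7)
The plan is to reduce to Corollary~\ref{cor:symprod} combined with the explicit classification of simple symmetric spaces tabulated in Appendix~\ref{sec:luna-diagr-symm}. First I would note that the spherically closed spherical system $(\Sigma, S^p, \Dm^a)$ associated to a spherical subgroup $H \subseteq G$ depends only on the open $B$-orbit stabilizer in $G/H$, on the valuation cone $\Vm$, and on the colors; all of these are unchanged on replacing $H$ by its spherical closure $\overline{H}$ (only the lattice $\Mm$ of the augmentation is affected). Hence, if $\Ss$ is a symmetric spherically closed spherical system arising from a symmetric $H \subseteq G$, I may pass to $\overline{H}$ without altering $\Ss$ and assume that $H$ is both symmetric and spherically closed.

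Next I would invoke Corollary~\ref{cor:symprod} to decompose $G/\overline{H}$ as a product $\prod_{i=1}^k G_i/\overline{H}_i$, where each factor is either $(G'_i \times G'_i)/G'_i$ with $G'_i$ a simple adjoint group embedded diagonally, or $G'_i/H'_i$ with $G'_i$ simple and $H'_i$ spherically closed symmetric. Since the spherically closed spherical system of a product of spherical homogeneous spaces is the product (in the sense defined in Section~\ref{sec:spher-syst-luna}) of the individual spherically closed spherical systems, a fact already used implicitly in the proof of Corollary~\ref{cor:symprod}, I obtain a corresponding product decomposition $\Ss = \Ss_1 \times \cdots \times \Ss_k$.

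It then remains to identify each factor $\Ss_i$ with an entry of Appendix~\ref{sec:luna-diagr-symm}. For factors of the first form, this is the diagonal symmetric space of a simple adjoint group, whose Luna diagram is a standard entry. For factors of the second form, the Cartan classification of involutions of simple algebraic groups yields a finite list of irreducible symmetric pairs, whose Luna diagrams (up to spherical closure) have been explicitly computed in the literature; see for instance \cite{vust90}. Appendix~\ref{sec:luna-diagr-symm} is organized to enumerate exactly these two families of diagrams.

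The main obstacle is not conceptual but a matter of bookkeeping: one must verify that the appendix indeed covers every case of the Cartan classification. The structural core of the statement, that a symmetric spherical system factors as a product of the above two basic types, is immediate from Corollary~\ref{cor:symprod} together with the multiplicativity of spherically closed spherical systems under direct products.
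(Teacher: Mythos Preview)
Your proposal is correct and follows exactly the intended approach: the paper states this corollary without proof because it is an immediate consequence of Corollary~\ref{cor:symprod} together with the fact that $H$ and its spherical closure $\overline{H}$ share the same spherically closed spherical system (the remark following Proposition~\ref{prop:clc}) and the multiplicativity of spherically closed spherical systems under products. The only minor point is that the appendix list is taken from \cite{bp:sph-sys} rather than \cite{vust90}, but this is a citation detail, not a gap.
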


\begin{definition}
  \label{def:elskel}
  A spherical skeleton $\Rs \coloneqq (\Sigma, S^p, \Dm^a, \gdiv)$ is
  called \emph{elementary} if we have $\langle \rho(\gdiv), \Sigma \rangle
  \subseteq \{0, -1\}$ and for every $D \in \gdiv$ there exists at
  most one $\gamma \in \Sigma$ with $\langle \rho(D), \gamma\rangle =
  -1$.  An elementary spherical skeleton is called \emph{reduced} if
  for every $\gamma \in \Sigma$ there exists at most one $D \in \gdiv$
  with $\langle \rho(D), \gamma\rangle = -1$.

  If $\Rs$ is an arbitrary spherical skeleton, we associate to it an
  elementary spherical skeleton $\Rs^{\el} \coloneqq (\Sigma, S^p,
  \Dm^a, \gdiv^{\el})$ as follows: We write $n_\gamma \coloneqq
  -\sum_{D\in\Gamma} \langle \rho(D), \gamma \rangle$, denote by
  $\gdiv_\gamma$ a set with $n_\gamma$ elements such that for every $D
  \in \gdiv_\gamma$ we have $\langle \rho(D), \Sigma\rangle \subseteq
  \{0, -1\}$ with $\langle \rho(D), \gamma'\rangle = -1$ if and only
  if $\gamma' = \gamma$, and define $\gdiv^{\el}$ to be the disjoint
  union of the $\gdiv_\gamma$ for $\gamma \in \Sigma$.

  We also associate to $\Rs$ a reduced elementary spherical skeleton
  $\Rs^{\vel}$: We start with $\Rs^{\el}$, but replace $\gdiv^{\el}$
  with $\gdiv^{\vel}$ where we retain only one element from each
  $\gdiv_\gamma$.  We write $\|\gdiv\| \subseteq \Sigma$ for the
  subset of those $\gamma$ with $n_\gamma > 0$. For a reduced
  elementary spherical skeleton $\Rs$, the set $\gdiv$ is determined by
  $\|\gdiv\|$.
\end{definition}

\begin{remark}
  Let $\Rs$ be a spherical skeleton. If $\Rs$ is complete, then
  $\Rs^{\el}$ and $\Rs^{\vel}$ are complete.
\end{remark}

\begin{prop}
  \label{prop:elem}
  Let $\Rs \coloneqq (\Sigma, S^p, \Dm^a, \gdiv)$ be a spherical
  skeleton.  Then we have \begin{align*}\is{\Rs} \le \is{\Rs^{\el}}
    \le \is{\Rs^{\vel}}\text{.}\end{align*}
\end{prop}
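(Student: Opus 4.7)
The plan is to parametrize $\sv \in \cone(\Sigma)$ by nonnegative coefficients with $\sv = \sum_{\gamma \in \Sigma} a_\gamma \gamma$ (using that $\Sigma$ is linearly independent, as it spans the simplicial cone $-\Vm^\vee$), and then to compare the three linear programs defining $\is{\Rs}$, $\is{\Rs^\el}$, and $\is{\Rs^\vel}$ directly, both on the objective side and on the feasibility side. The key standing observation is that $m_D = 1$ for every $D$ in $\gdiv$, $\gdiv^\el$, or $\gdiv^\vel$, so the $\bdiv$-contribution to the objective is the same in all three problems and only the $\gdiv$-part needs attention.

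For $\is{\Rs} \le \is{\Rs^\el}$, I would first check that the two objectives agree on all of $\cone(\Sigma)$: a direct expansion gives
\begin{align*}
\sum_{D \in \gdiv} \langle \rho(D), \sv \rangle \;=\; \sum_{\gamma \in \Sigma} a_\gamma \sum_{D \in \gdiv} \langle \rho(D), \gamma\rangle \;=\; -\sum_{\gamma \in \Sigma} n_\gamma a_\gamma,
\end{align*}
and by the construction of $\gdiv^\el$ (each $\gdiv_\gamma$ has $n_\gamma$ elements, each pairing to $-1$ on $\gamma$ and $0$ elsewhere on $\Sigma$) the same value is obtained for $\Rs^\el$. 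For the feasibility, given $\sv \in \Qm^*_\Rs \cap \cone(\Sigma)$ and any $\gamma \in \|\gdiv\|$, I pick $D_0 \in \gdiv$ with $\langle \rho(D_0), \gamma\rangle \ne 0$. The constraint $\langle \rho(D_0), \sv\rangle \ge -1$ becomes $\sum_{\gamma'} a_{\gamma'}(-\langle \rho(D_0), \gamma'\rangle) \le 1$, a sum of nonnegative terms; since $-\langle \rho(D_0), \gamma\rangle \in \Z_{\ge 1}$, this already forces $a_\gamma \le 1$. But those inequalities are exactly the $\gdiv^\el$-constraints (each $D \in \gdiv_\gamma$ imposes $a_\gamma \le 1$), so $\Qm^*_\Rs \cap \cone(\Sigma) \subseteq \Qm^*_{\Rs^\el} \cap \cone(\Sigma)$, and combined with equality of objectives this yields the first inequality.

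For $\is{\Rs^\el} \le \is{\Rs^\vel}$, the feasibility regions intersected with $\cone(\Sigma)$ are equal, since both are cut out by the same $\bdiv$-constraints and the same list of inequalities $a_\gamma \le 1$ for $\gamma \in \|\gdiv\|$. On this common region the objectives differ only in their $\gdiv$-parts, and
\begin{align*}
\sum_{D \in \gdiv^\vel} \langle \rho(D), \sv\rangle \;-\; \sum_{D \in \gdiv^\el} \langle \rho(D), \sv\rangle \;=\; \sum_{\gamma \in \|\gdiv\|} (n_\gamma - 1) a_\gamma \;\ge\; 0,
\end{align*}
so the $\Rs^\vel$-objective dominates the $\Rs^\el$-objective pointwise, giving the second inequality. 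The argument has no serious obstacle; the one ingredient worth flagging is the integrality $\langle \rho(D), \gamma\rangle \in \Z$, which is what converts the potentially complicated $\gdiv$-inequalities of $\Rs$ into the clean single-coordinate bounds $a_\gamma \le 1$ governing $\Rs^\el$.
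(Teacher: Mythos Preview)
Your proof is correct and is essentially the same as the paper's own argument: both show that the objective functions for $\Rs$ and $\Rs^{\el}$ agree on $\cone(\Sigma)$ (via $\sum_{D\in\gdiv^{\el}}\rho(D)=\sum_{D\in\gdiv}\rho(D)$), establish the feasibility inclusion $\Qm_\Rs^*\cap\cone(\Sigma)\subseteq\Qm_{\Rs^{\el}}^*\cap\cone(\Sigma)$ by comparing each $D\in\gdiv$ against the corresponding $D'\in\gdiv_\gamma$, and then observe that $\Qm_{\Rs^{\el}}^*=\Qm_{\Rs^{\vel}}^*$ while the $\Rs^{\vel}$-objective dominates pointwise on $\cone(\Sigma)$. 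Your explicit coordinate parametrization $\sv=\sum a_\gamma\gamma$ and the highlighted integrality of $\langle\rho(D),\gamma\rangle$ make transparent exactly the step the paper leaves as the observation $\langle\rho(D),\sv\rangle\le\langle\rho(D'),\sv\rangle$.
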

\begin{proof}
  Let $\gamma \in \Sigma$, $D' \in \gdiv_\gamma \subseteq
  \gdiv^{\el}$, and let $D \in \gdiv$ such that $\langle \rho(D),
  \gamma \rangle \ne 0$.  Writing any $\sv \in \cone(\Sigma)$ as a
  nonnegative linear combination of $\Sigma$, we observe $\langle
  \rho(D), \sv \rangle \le \langle \rho(D'), \sv \rangle$.  Then, for
  $\sv \in \Qm_{\Rs}^* \cap \cone(\Sigma)$, we have
  \begin{align*}
    \langle \rho(D'), \sv \rangle \ge \langle \rho(D), \sv \rangle \ge -m_D
    = -m_{D'} = -1\text{,}
  \end{align*}
  hence $\Qm_{\Rs}^* \cap \cone(\Sigma) \subseteq \Qm_{\Rs^{\el}}^*
  \cap \cone(\Sigma)$. As $\sum_{D\in\Gamma^{\el}} \rho(D) =
  \sum_{D\in\Gamma} \rho(D)$, we obtain $\is{\Rs} \le \is{\Rs^{\el}}$.

  On the other hand, we have $\Qm^*_{\Rs^{\vel}} = \Qm^*_{\Rs^{\el}}$
  and $\sum_{D\in\Gamma^{\el}} \rho(D) \le \sum_{D\in\Gamma^{\vel}}
  \rho(D)$ on $\Sigma$, so that we obtain $\is{\Rs^{\el}} \le
  \is{\Rs^{\vel}}$.
\end{proof}

\begin{remark}
  \label{rem:less-markings}
  Let $\Rs_1 \coloneqq (\Sigma, S^p, \Dm^a, \gdiv_1)$ and $\Rs_2
  \coloneqq (\Sigma, S^p, \Dm^a, \gdiv_2)$ be two reduced elementary
  spherical skeletons with $\|\gdiv_1\| \subseteq \|\gdiv_2\|$. Then
  we have $\is{\Rs_2} \le \is{\Rs_1}$.
\end{remark}

\begin{theorem}
  \label{thm:p-leq}
  Let $\Rs$ be a complete symmetric spherical skeleton. Then we have
  $\is{\Rs} \le |R^+ \setminus R^+_{S^p}|$.
\end{theorem}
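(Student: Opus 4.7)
Our plan is to reduce the inequality to a finite case-by-case verification over the indecomposable symmetric spherical systems listed in Appendix~\ref{sec:luna-diagr-symm}. By Proposition~\ref{prop:elem}, we have $\is{\Rs} \le \is{\Rs^{\vel}}$, so it suffices to bound $\is{\cdot}$ on reduced elementary complete symmetric spherical skeletons. Next, by Remark~\ref{rem:less-markings}, shrinking $\|\gdiv\|$ (while maintaining completeness of the skeleton) only increases $\is{\cdot}$, so we may further assume that $\|\gdiv\|$ is minimal with respect to making $\Rs^{\vel}$ complete. Finally, by Corollary~\ref{cor:sym-sph-sys-prod}, the underlying spherically closed spherical system decomposes as a product of factors from the appendix; both $\is{\cdot}$ and $|R^+ \setminus R^+_{S^p}|$ are additive under such a product decomposition, once $\|\gdiv\|$ is split factor-wise (the feasible region and the objective of the LP defining $\is{\cdot}$ both split accordingly). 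Hence we are reduced to proving the inequality for a single indecomposable factor in the appendix.

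For each such factor, the problem becomes an explicit linear program. The feasible region is $\Qm^*_\Rs \cap \cone(\Sigma)$, a polytope in $\lspan_\Q(\Sigma)$ cut out by the inequalities $\langle \rho(D), \sv\rangle \ge -m_D$ for $D \in \adiv$, where $m_D$ for $D \in \Dm$ is given by Luna's formula recalled in Section~\ref{sec:spher-syst-luna} and $m_D = 1$ for $D \in \gdiv$. The objective $\sum_{D \in \adiv} \bigl(m_D - 1 + \langle \rho(D), \sv\rangle\bigr)$ is affine in $\sv$, so the supremum is attained at a vertex of $\Qm^*_\Rs \cap \cone(\Sigma)$. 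Using the explicit Luna diagrams from Appendix~\ref{sec:luna-diagr-symm} together with the values $m_D$ tabulated in Table~\ref{tab:sr}, we intend to solve each such LP (typically guessing that the optimum is attained at a point of high symmetry, such as a scalar multiple of a sum of fundamental coweights dual to $\Sigma$) and to compare the optimal value with $|R^+ \setminus R^+_{S^p}| = \dim G/P$, which is a straightforward combinatorial invariant of the marked Dynkin diagram.

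The main obstacle will be the bookkeeping in this last step: the appendix contains several infinite families parameterized by the type and rank of the group, and each requires its own LP. The most delicate entries are likely those in which $\Sigma$ mixes spherical roots of different shapes (for instance, a simple root together with sums of two simple roots), or in which the minimal completing set $\|\gdiv\|$ is not unique, forcing us to verify the inequality for each minimal choice. We expect, however, that once the optimum is correctly identified in one representative of a family, the remaining members can be handled uniformly by induction on the rank, so that the whole verification collapses to a manageable list of computations grouped by the type of the ambient Dynkin diagram.
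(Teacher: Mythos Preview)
Your proposal is correct and follows essentially the same route as the paper: reduce via Proposition~\ref{prop:elem} to the reduced elementary case, split into indecomposable factors via Corollary~\ref{cor:sym-sph-sys-prod}, shrink $\|\gdiv\|$ via Remark~\ref{rem:less-markings}, and then verify the LP on each factor against the tables in Appendix~\ref{sec:tables}. The one refinement the paper adds is the observation that, because symmetric subgroups are reductive, the colors of each indecomposable factor span a \emph{strictly convex} cone in $\Lambda^*_\Q$ (\cite[Theorem~6.7]{knopsph}); hence completeness forces $\|\gdiv_i\|\ne\emptyset$, and by Remark~\ref{rem:less-markings} one may take $|\gdiv_i|=1$ outright---this eliminates your worry about non-unique minimal completing sets and reduces the bookkeeping to exactly the one-element-$\gdiv$ cases recorded in Appendix~\ref{sec:tables}.
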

\begin{proof}
  According to Proposition~\ref{prop:elem}, it suffices to show
  $\is{\Rs^{\vel}} \le |R^+ \setminus R^+_{S^p}|$, \ie we may assume
  that $\Rs$ is reduced elementary.  If we write $\Ss=\Ss_1 \times
  \ldots \Ss_k$ as product of symmetric spherically closed spherical
  systems $\Ss_i = (\Sigma_i, S^p_i, \Dm^a_i )$ from the list in
  Appendix~\ref{sec:luna-diagr-symm} (see
  Corollary~\ref{cor:sym-sph-sys-prod}), then we may decompose $\Gamma
  = \Gamma_1 \cup \ldots \cup \Gamma_k$ as a disjoint union of subsets
  $\Gamma_i \subseteq \Gamma$ such that $\rho(D) \in \Lambda_i^*$ for
  $D \in \Gamma_i$ where $\Lambda_i$ is the lattice generated by
  $\Sigma_i$. This defines a product $\Rs = \Rs_1 \times \ldots \times
  \Rs_k$. It is straightforward to check that $\is{\Rs} =
  \sum_{i=1}^k\is{\Rs_i}$. As $\Rs$ is complete, it follows that the
  $\Rs_i$ are complete as well.  Hence we may assume that $\Rs$ is a
  complete spherical skeleton with underlying spherically closed
  spherical system from the list in Appendix
  \ref{sec:luna-diagr-symm}.

  As symmetric subgroups are reductive, the image of the colors under
  $\rho$ generate a strictly convex cone in $\Lambda^*_\Q$
  (see~\cite[Theorem~6.7]{knopsph}). As $\Rs$
  is complete, this means that $\|\gdiv\|$ contains at least one
  element.  By Remark \ref{rem:less-markings} it suffices to prove
  $\is{\Rs} \le |R^+ \setminus R^+_{S^p}|$ in the case where
  $\|\gdiv\|$ contains exactly one element.  In
  Appendix~\ref{sec:tables} we have computed all the possible cases,
  and it is then straightforward to check that the assertion is
  true. An example for such a computation can be found in the next
  section.
\end{proof}

\section{Symmetric varieties: solving linear programs}

In this section we explain a method to compute $\is{\Rs}$ where $\Rs
\coloneqq ( \Sigma, S^p, \Dm^a, \Gamma )$ is a reduced elementary symmetric
spherical skeleton with spherically closed spherical system $(\Sigma,
S^p, \Dm^a)$ from the list in Appendix~\ref{sec:luna-diagr-symm} and
$|\Gamma| = 1$.  We will illustrate this method on the example
$\PSL(n{+}1) \subseteq \PSL(n{+}1) \times \PSL(n{+}1)$. The remaining cases
can be handled analogously.

We denote the simple roots of the first (resp.~the second) simple
factor of $\PSL(n{+}1) \times \PSL(n{+}1)$ by $\alpha_1, \ldots,
\alpha_n$ (resp.~by $\alpha'_1, \ldots, \alpha'_n$). Then we have
$\Sigma = \{ \alpha_i + \alpha'_i : i = 1, \dots, n \}$, $S^p =
\emptyset$, $\Dm^a = \emptyset$, and $\| \Gamma \| = \{ \gamma_k \}$
for some $1 \le k \le n$ where $\gamma_k = \alpha_k + \alpha'_k$.

We identify $\Lambda_\Q \otimes_\Q \R \cong \R^\Sigma$ and define $c
\in \R^\Sigma$ by $c_\gamma \coloneqq \sum_{D \in \Delta}
\langle\rho(D), \gamma\rangle$ as well as $b \in \R^\adiv$ by $b_D
\coloneqq 1$ and denote by $A \in \R^{\adiv \times \Sigma}$ the matrix
with $A_{D,\gamma} \coloneqq \langle -\rho(D) / m_D,
\gamma\rangle$. Then we have
\begin{align*}
  \is{\Rs} = \sum_{D\in\adiv} (m_D-1) + \sup{} \rleft\{c^tx :
  \text{$x\in\R^\Sigma$, $Ax \le b$, $x \ge 0$}\rright\}\text{,}
\end{align*}
where the supremum is the optimal value of a linear program. Passing
to the dual linear program, we obtain
\begin{align*}
  \is{\Rs} = \sum_{D\in\adiv} (m_D-1) + \inf{}\rleft\{b^ty : \text{$y
    \in \R^{\adiv}$, $A^ty \ge c$, $y \ge 0$}\rright\}\text{,}
\end{align*}
which is true if either of the linear programs has a finite optimal
value. We are going to solve the dual linear program, in particular
showing that it attains a finite infimum.

The Bourbaki numbering of the simple roots induces a numbering of
$\Sigma$, hence also a numbering of $\Dm$ as we have a natural
bijection bijection $\Dm \cong \Sigma$. We extend this numbering to
$\Delta = \Dm \cup \Gamma$ where the element in $\Gamma$ is given the
number $n+1$. We obtain natural identifications $\R^\Sigma \cong
\R^n$, $\R^\Delta \cong \R^{n+1}$, and $\R^{\Delta \times \Sigma}
\cong \R^{(n+1) \times n}$. We denote the standard basis of $\R^n$ by
$e_1, \ldots, e_n$.

Because of the symmetry of the Luna diagram, we may assume that $k
\le \rleft\lceil n/2 \rright\rceil$ where $\lceil x \rceil$ denotes
the smallest integer greater than or equal to the real number
$x$.

We have $c = e_1 - e_k + e_n$ and the $n \times n$-submatrix of $A$
consisting of the first $n$ rows is $-\frac{1}{2}A_n$ where $A_n$
denotes the Cartan matrix of the root system of
$\PSL(n{+}1)$. Moreover, the last row in $A$ coincides with $e_k \in
\R^n$.

\begin{prop}
  \label{prop:an_lower_bounds}
  For every $y \in \R^{n+1}_{\ge0}$ subject to $A^ty \ge c$ we have:
  \begin{align*}
    y_i \ge
    \begin{cases}
      2(i-1) + i y_1 & \text{for } 1 \le i \le k\text{,}\\
      2(n-i) + (n-i+1) y_n & \text{for } k \le i \le n\text{,}\\
      y_k - \tfrac{1}{2} ( y_{k-1} + y_{k+1} ) - 1 & \text{for } i = n
      + 1\text{.}
    \end{cases}
  \end{align*}
\end{prop}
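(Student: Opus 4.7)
The plan is to read off the inequalities directly from the explicit form of $A$ and $c$ and then exploit a discrete convexity property of the sequence $(y_j)_{1 \le j \le n}$. Writing the constraint $A^t y \ge c$ component by component, the $j$-th inequality (for $1 \le j \le n$) takes the form
\begin{align*}
  -y_j + \tfrac{1}{2} y_{j-1} + \tfrac{1}{2} y_{j+1} + \delta_{kj} y_{n+1} \ge \delta_{1j} - \delta_{kj} + \delta_{nj}\text{,}
\end{align*}
where $y_{j-1}$ is dropped for $j = 1$ and $y_{j+1}$ is dropped for $j = n$. In particular, for $1 < j < n$ with $j \ne k$ this simplifies to the discrete convexity inequality $y_{j+1} - 2y_j + y_{j-1} \ge 0$, while the $j = k$ constraint, upon solving for $y_{n+1}$, is precisely the claimed bound in the third case.

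For the first bound I would argue as follows. The case $i = 1$ is trivial. Assuming $k \ge 2$, I would apply the $j = 1$ constraint together with $y_{n+1} \ge 0$ and $\delta_{k,1} = 0$ to obtain $y_2 - y_1 \ge y_1 + 2$. Discrete convexity on the interval $2 \le j \le k-1$ gives $y_{j+1} - y_j \ge y_j - y_{j-1}$, so by a direct induction $y_i - y_{i-1} \ge y_1 + 2$ for every $2 \le i \le k$. A telescopic sum then produces $y_i \ge y_1 + (i-1)(y_1 + 2) = i y_1 + 2(i-1)$, which is the desired inequality. The second bound is obtained by exactly the same argument run symmetrically, starting from the $j = n$ constraint (which gives $y_{n-1} - y_n \ge y_n + 2$) and propagating the discrete convexity inequality backward through $j = n-1, n-2, \dots, k+1$.

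The main obstacle is not mathematical but organizational: one must check that the edge cases $k \in \{1, n\}$ (and, say, $k = 2$ or $k = n-1$) are handled correctly. These merely collapse one of the two inductions to its base step; the convexity argument still applies on the complementary range and the boundary constraint at $j = 1$ or $j = n$ remains intact. No input is used beyond the hypotheses $y \ge 0$ and $A^t y \ge c$, so the proof reduces to a short structured induction once the tridiagonal form of the LP has been made explicit.
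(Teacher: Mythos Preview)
Your proof is correct and follows essentially the same route as the paper's: both exploit that the interior constraints $y_{j+1} - 2y_j + y_{j-1} \ge 0$ encode discrete convexity, combine this with the boundary constraints at $j=1$ and $j=n$, and read off the third bound directly from the $j=k$ row. The only cosmetic difference is that the paper organizes the induction as an iterated substitution yielding $y_i \ge (i-1)y_2 - (i-2)y_1$ before inserting $y_2 \ge 2 + 2y_1$, whereas you bound the first differences $y_i - y_{i-1} \ge y_1 + 2$ and telescope; these are equivalent rearrangements of the same computation.
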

\begin{proof}
  First, we verify the assertion for $1 \le i \le k$. The cases $i =
  1, 2$ are obvious. For $2 < j \le k$ it follows from $A^t y \ge c$
  that
  \begin{equation}
    \label{eq:a}
    y_j \ge 2 y_{j-1} - y_{j-2}\text{.}\tag{$\ast$}
  \end{equation}
  Let $i > 2$ and consider \eqref{eq:a} for $j \coloneqq i$. Then
  successively apply \eqref{eq:a} for $j\coloneqq i-1, \dots, 3$,
  resulting in the inequality $y_i \ge ( i - 1 ) y_2 - ( i - 2 )
  y_1$. Further applying the inequality $y_2 \ge 2 + 2 y_1$, we obtain
  $y_i \ge 2( i - 1 ) + i y_1$.

  Next, we verify the assertion for $k \le i \le n$. The cases $i =
  n-1, n$ are obvious. For $k \le j < n-1$ it follows from $A^t y \ge
  c$ that
  \begin{equation}
    \label{eq:b}
    y_j \ge 2 y_{j+1} - y_{j+2}\text{.}\tag{$\ast\ast$}
  \end{equation}
  Let $i < n-1$ and consider \eqref{eq:b} for $j \coloneqq i$. Then
  successively apply \eqref{eq:b} for $j \coloneqq i+1, \dots, n-2$,
  resulting in the inequality $y_i \ge (n-i) y_{n-1} - (n-i-1)y_n$.
  Further applying the inequality $y_{n-1} \ge 2 + 2 y_n$, we obtain
  $y_i \ge 2 (n-i) + ( n - i + 1 ) y_n$.
\end{proof}

\begin{cor}
  The dual linear program achieves its minimum at the point $y \in
  \R^{n+1}$ with
  \begin{align*}
    y_i =
    \begin{cases}
      2(i-1) & \text{for} \; i = 1, \ldots, k-1 \\
      2(n-i) & \text{for} \; i = k, \ldots, n \\
      n - 2(k-1) & \text{for} \; i = n+1.
    \end{cases}
  \end{align*}
  and this minimum is given by $n^2-2kn+3n+2k^2-6k+4$.
\end{cor}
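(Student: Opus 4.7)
The proof has three parts: verify feasibility of the proposed $y^*$, compute $b^t y^*$ directly, and invoke Proposition~\ref{prop:an_lower_bounds} to show no other feasible point improves on it. For feasibility, non-negativity of $y^*_i$ for $i \le n$ is obvious, and $y^*_{n+1} = n - 2(k-1) \ge 0$ uses $k \le \lceil n/2 \rceil$. The constraint $A^t y^* \ge c$ unpacks---using that the first $n$ rows of $A$ form $-\tfrac{1}{2}A_n$ and the last row is $e_k$---to
\[
\tfrac{1}{2}\bigl(y_{j-1} + y_{j+1}\bigr) - y_j + \delta_{j,k}\, y_{n+1} \;\ge\; c_j \qquad (j = 1, \dots, n),
\]
with the convention $y_0 := 0$ and the obvious truncation at $j = n$. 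A short case analysis---$j < k$, $j = k$, $j > k$, and the corners $j = 1, n$---shows every such inequality is in fact an equality at $y^*$. The value $b^t y^* = \sum_i y^*_i$ is then just the sum of the two arithmetic progressions forming $y^*_1, \dots, y^*_n$ plus the scalar $y^*_{n+1}$.

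For optimality, let $y$ be any feasible vector. Apply the third bound of Proposition~\ref{prop:an_lower_bounds} to rewrite
\[
\sum_{i=1}^{n+1} y_i \;\ge\; \sum_{i=1}^{k-2} y_i + \tfrac{1}{2} y_{k-1} + 2\, y_k + \tfrac{1}{2} y_{k+1} + \sum_{i=k+2}^{n} y_i - 1,
\]
and substitute the first bound of Proposition~\ref{prop:an_lower_bounds} into each $y_i$ with $i \le k-1$, the second bound into each $y_i$ with $i \ge k+1$, and into $y_k$ the stronger of the two---the $y_n$-bound, which exceeds the $y_1$-bound at $y_1 = y_n = 0$ precisely because $k \le \lceil n/2 \rceil$. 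Summing the arithmetic series that appears, the right-hand side collapses to
\[
b^t y^* \;+\; \tfrac{(k-1)^2}{2}\, y_1 \;+\; \Bigl(2(n-k+1) + \tfrac{(n-k)^2}{2}\Bigr) y_n,
\]
whose $y_1$- and $y_n$-coefficients are non-negative. Hence $b^t y \ge b^t y^*$ for every feasible $y$, and combined with feasibility this proves that $y^*$ attains the dual minimum.

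The one place requiring care is the boundary: when $k \in \{1, 2\}$ (or, for small $n$, when $k$ is close to $n$) some of the sums above are empty and a couple of components of $c = e_1 - e_k + e_n$ collapse---e.g.\ $c_1 = 0$ rather than $1$ when $k = 1$. The arithmetic then goes through with the absent terms omitted and the now non-tight constraint at $j = 1$ (or $j = n$) replaced by the correspondingly weaker inequality; the overall structure is unchanged. Morally, Proposition~\ref{prop:an_lower_bounds} together with the feasibility check amounts to a complementary-slackness verification that $y^*$ saturates enough primal constraints to pin down the dual optimum.
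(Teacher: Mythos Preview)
Your proof follows the same strategy as the paper's: bound $b^t y$ from below via Proposition~\ref{prop:an_lower_bounds}, then exhibit $y^*$ as a feasible point attaining that bound. You spell out more detail than the paper does---in particular the explicit $y_1$- and $y_n$-coefficients in the lower bound---and you avoid the paper's appeal to strong duality, which is unnecessary once you have a direct lower bound valid for every feasible $y$.

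One small correction: your claim that every constraint $(A^t y^*)_j \ge c_j$ holds with \emph{equality} is not quite right. At $j = k-1$ (for $k \ge 3$) one computes
\[
(A^t y^*)_{k-1} \;=\; \tfrac{1}{2}\bigl(2(k-3) + 2(n-k)\bigr) - 2(k-2) \;=\; n - 2k + 1,
\]
which is $\ge 0 = c_{k-1}$ by the hypothesis $k \le \lceil n/2 \rceil$ but is strict unless $n$ is odd and $k = (n+1)/2$; similarly at $j = 1$ when $k = 2$. This does not affect your argument---only feasibility is needed, and that holds---but the case analysis should reflect it. (For what it is worth, the paper makes the same overclaim when it asserts ``$A^t y = c$''.)
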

\begin{proof}
  As we can find a solution $x$ of the original linear program which
  is a vertex of $\Qm^*_{\Rs} \cap \cone(\Sigma)$, by the strong
  duality theorem, the dual linear program also has an optimal
  solution which we denote by $y'$. By Proposition
  \ref{prop:an_lower_bounds}, we obtain
  \begin{align*}
    \sum_{i=1}^{n+1} y'_i & \ge \sum_{i=1}^{k-2} y'_i + \tfrac{1}{2}
    y'_{k-1} + 2 y'_k + \tfrac{1}{2} y'_{k+1} + \sum_{i=k+2}^n y'_i -
    1 \\
    & \ge n^2-2kn+2n+2k^2-6k+4\text{.}
  \end{align*}
  Observe that because of the assumption $k \le \lceil n/2 \rceil$ we
  have $y_{n+1} \ge 0$. It is straightforward to verify $y \ge 0$,
  $A^ty = c$, and
  \begin{align*}
    \sum_{i=1}^{n+1} y_i = n^2-2kn+2n+2k^2-6k+4\text{.}
  \end{align*}
  It follows that we have
  \begin{align*}
    \inf{}\rleft\{b^ty' : \text{$y' \in \R^{n+1}$, $A^ty' \ge c$, $y'
      \ge 0$}\rright\} = n^2-2kn+2n+2k^2-6k+4\text{,}
  \end{align*}
  and we obtain $\is{\Rs} = n^2-2kn+3n+2k^2-6k+4$.
\end{proof}

\section{Symmetric varieties: the case of equality}

\begin{definition}
  A spherical skeleton $\Rs$ is called \emph{linear} if $\Rs \cong
  \Rs_V$ for some multiplicity-free space $V$.  A linear spherical
  skeleton $\Rs \cong \Rs_V$ is called \emph{indecomposable} if the
  multiplicity-free space $V$ is indecomposable (see
  \cite[Section~1]{gcscsv} for the definition of indecomposable
  multiplicity-free spaces). The indecomposable linear spherical
  skeletons correspond to 
  the marked Luna diagrams listed in \cite[Section~2]{gcscsv}.
\end{definition}

\begin{remark}
  In Appendix~\ref{sec:tables} we have computed $\is{\Rs}$ for all
  reduced elementary $\Rs$ with $\Ss$ from the list in
  Appendix~\ref{sec:luna-diagr-symm} and $|\gdiv| = 1$. From these we
  have gathered in Table~\ref{tab:upper-bound-achieved} those cases
  where $\is{\Rs} = |R^+ \setminus R^+_{S^p}|$.  As all of these cases
  appear in \cite[Section~2]{gcscsv}, they are (indecomposable)
  linear.  Moreover, in these cases the face of the polytope
  $\Qm_{\Rs}^*$ where the optimal value of $\is{\Rs}$ is achieved is a
  vertex $\sv \in \Qm_{\Rs}^* \cap \cone(\Sigma)$,
  which we also give in Table~\ref{tab:upper-bound-achieved}
  (in the last column).
  \begin{table}[!ht]
    \centering
    \renewcommand{\arraystretch}{1.1}
    \begin{tabular}{@{}llll@{}}
      \toprule
      No. & conditions & $\|\Gamma\|$ & vertex $\sv$\\
      \midrule
      $2$ & $G$ of type $A_n$ & $\gamma_1$ & $\sum_{k=1}^n k^2 \gamma_k$ \\
      $2$ & $G$ of type $A_n$ & $\gamma_n$ & $\sum_{k=1}^n k^2 \gamma_{n-k}$ \\
      $3$ & $l {\ge} 1$, $m{=}0$ & $\gamma_1$ & $\gamma_1$ \\
      $4$ & $m{=}0$ & $\gamma_1$ & $\gamma_1$ \\
      $5$ & & $\gamma_1$ & $\tfrac{1}{2} \sum_{k=1}^m (k^2{+}k) \gamma_k$ \\
      $5$ & & $\gamma_m$ & $\tfrac{1}{2} \sum_{k=1}^m (k^2{+}k) \gamma_{m-k}$ \\
      $6$ & & $\gamma_1$ & $\sum_{k=1}^m (2k^2 {-} k) \gamma_k$ \\
      $6$ & & $\gamma_m$ & $\sum_{k=1}^m (2k^2 {-} k) \gamma_{m-k}$ \\
      $9$ & $l {\ge} 2$, $m {=} 0$ & $\gamma_1$ & $\gamma_1$ \\
      $15$ & $l {\ge} 3$, $m {=} 0$ & $\gamma_1$ & $\gamma_1$ \\
      $19$ & & $2\alpha_1 {+} 2\alpha_3 {+} 2\alpha_4 {+} \alpha_2 {+} \alpha_5$ & $\gamma_1 {+} 10 \gamma_2$\\
      $19$ & & $2\alpha_6 {+} 2\alpha_5 {+} 2\alpha_4 {+} \alpha_2 {+} \alpha_3$ & $10 \gamma_1 {+} \gamma_2$\\
      \bottomrule
    \end{tabular}
    \caption{cases where the upper bound is achieved}
    \label{tab:upper-bound-achieved}
  \end{table}
\end{remark}

Let $\Rs \coloneqq (\Sigma, S^p, \Dm^a, \Gamma)$ be a complete
symmetric spherical skeleton with underlying spherically closed
spherical system $\Ss \coloneqq (\Sigma, S^p, \Dm^a)$ and $\is{\Rs} =
|R^+\setminus R^+_{S^p}|$.
Without loss of generality, we may assume $\Rs = [\Rs]$.
 According to Proposition~\ref{prop:elem}
and Theorem~\ref{thm:p-leq}, we have $\is{\Rs}=\is{\Rs^{\vel}} = |R^+
\setminus R^+_{S^p}|$. If we write $\Ss=\Ss_1 \times \ldots \times
\Ss_k$ as product of spherically closed spherical systems $\Ss_i = (
\Sigma_i, S^p_i, \Dm^a_i )$ from the list in
Appendix~\ref{sec:luna-diagr-symm} with underlying root systems $R_i$
(see Corollary \ref{cor:sym-sph-sys-prod}), we may accordingly
decompose $\Gamma^{\vel} = \Gamma^{\vel}_1 \cup \dots \cup
\Gamma^{\vel}_k$ such that $\rho(D) \in \Lambda_i^*$ for $D \in
\Gamma^{\vel}_i$ where $\Lambda_i$ is the lattice generated by
$\Sigma_i$. This defines a product $\Rs^{\vel} = \Rs^{\vel}_1 \times
\ldots \times \Rs^{\vel}_k$.  It is straightforward to verify
$\is{\Rs^{\vel}} = \sum_{i=1}^k\is{\Rs^{\vel}_i}=|R^+ \setminus
R^+_{S^p}|=\sum_{i=1}^k|R_i^+ \setminus R_{i,S_i^p}^+|$, hence
$\is{\Rs_i^{\vel}}=|R_i^+ \setminus R_{i,S_i^p}^+|$.

\begin{lemma}
  \label{lem:ee-one}
  $\|\Gamma_i^{\vel}\|$ contains exactly one element.
\end{lemma}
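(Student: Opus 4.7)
The plan is to combine Remark~\ref{rem:less-markings} with the explicit computations of Appendix~\ref{sec:tables} (summarized in Table~\ref{tab:upper-bound-achieved}) and then argue case-by-case that two or more markings cannot coexist while preserving the equality $\is{\Rs_i^{\vel}}=|R_i^+\setminus R_{i,S_i^p}^+|$.

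First I would observe that $\|\Gamma_i^{\vel}\|\neq\emptyset$: completeness of $\Rs_i^{\vel}$ together with the fact that a symmetric subgroup is reductive (so that the image of the colors under $\rho$ spans a strictly convex cone, as in the proof of Theorem~\ref{thm:p-leq}) forces at least one marking. Next, for any $\gamma\in\|\Gamma_i^{\vel}\|$, let $\Rs_i^{(\gamma)}$ denote the reduced elementary spherical skeleton with the same $\Ss_i$ but with $\|\Gamma\|=\{\gamma\}$. By Remark~\ref{rem:less-markings} we have $\is{\Rs_i^{\vel}}\le\is{\Rs_i^{(\gamma)}}$, and the single-marking computation of Appendix~\ref{sec:tables} gives $\is{\Rs_i^{(\gamma)}}\le|R_i^+\setminus R_{i,S_i^p}^+|$. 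Combining these with the standing assumption $\is{\Rs_i^{\vel}}=|R_i^+\setminus R_{i,S_i^p}^+|$ forces $\is{\Rs_i^{(\gamma)}}=|R_i^+\setminus R_{i,S_i^p}^+|$. Hence every $\gamma\in\|\Gamma_i^{\vel}\|$ must appear in Table~\ref{tab:upper-bound-achieved}.

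Now suppose for contradiction that $\gamma\neq\gamma'$ both lie in $\|\Gamma_i^{\vel}\|$. Inspecting Table~\ref{tab:upper-bound-achieved}, two distinct markings can occur only in the rows numbered~2, 5, 6, and~19. In each of these rows the last column records the unique vertex $\sv$ of $\Qm^*_{\Rs_i^{(\gamma)}}\cap\cone(\Sigma)$ at which the optimum is achieved. A direct inspection shows that the coefficient of $\gamma'$ (in the basis $\Sigma$) in $\sv$ strictly exceeds $1$, and symmetrically the coefficient of $\gamma$ in $\sv'$ strictly exceeds $1$; for example, in row~2 (type $A_n$) the vertex $\sv=\sum_{k=1}^n k^2\gamma_k$ has coefficient $n^2>1$ on $\gamma_n$. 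Since the marking $\gamma'$ contributes the additional constraint $c_{\gamma'}\le 1$ to the feasible region $\Qm^*_{\Rs_i^{\vel}}\cap\cone(\Sigma)$, the vertex $\sv$ is infeasible for $\Rs_i^{\vel}$; the same holds for $\sv'$.

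The final step is to upgrade this from \emph{infeasibility of the known optimizers} to \emph{strict decrease of $\is{\Rs_i^{\vel}}$}. For this I would solve the resulting two-constraint linear program explicitly, exactly as in the $\PSL(n{+}1)^2/\PSL(n{+}1)$ computation of Section~7: write $\is{\Rs_i^{\vel}}$ as the optimum of a linear program in the coefficients $c_\gamma$, pass to the dual, and exhibit a feasible dual vector whose cost is strictly below $|R_i^+\setminus R_{i,S_i^p}^+|$. By weak duality this gives the required strict inequality, contradicting $\is{\Rs_i^{\vel}}=|R_i^+\setminus R_{i,S_i^p}^+|$. The main obstacle is purely computational: it requires going through the four relevant rows of Table~\ref{tab:upper-bound-achieved} one at a time, but each individual case reduces to a small linear program entirely analogous to the one carried out in Section~7.
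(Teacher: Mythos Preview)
Your plan is correct and follows the same route as the paper's proof: show that every $\gamma\in\|\Gamma_i^{\vel}\|$ must appear in Table~\ref{tab:upper-bound-achieved} (via Remark~\ref{rem:less-markings} and the single-marking computations of Appendix~\ref{sec:tables}), deduce that $\|\Gamma_i^{\vel}\|$ has at most two elements with two possible only for Nos.~2, 5, 6, 19, and then rule out the two-element cases. The paper simply asserts that in those four cases ``it can be readily verified'' that $\is{\Rs_i^{\vel}}<|R_i^+\setminus R_{i,S_i^p}^+|$, without further detail.

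Your proposed ``final step'' --- solving the two-constraint linear program explicitly via duality --- would work but is unnecessary. You have already done enough: the remark preceding Table~\ref{tab:upper-bound-achieved} records that the single-marking optimum is attained at a \emph{unique} point $\sv$ (the optimal face is a vertex), and you verify that this $\sv$ violates the extra constraint $c_{\gamma'}\le 1$. Since the two-marking feasible region is contained in the single-marking one and excludes its unique optimizer, and since the two-marking objective is pointwise at most the single-marking objective on $\cone(\Sigma)$ (the extra summand $\langle\rho(D_{\gamma'}),\cdot\rangle$ is nonpositive there), the strict inequality $\is{\Rs_i^{\vel}}<\is{\Rs_i^{(\gamma)}}=|R_i^+\setminus R_{i,S_i^p}^+|$ follows immediately by compactness of the feasible region. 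No dual computation is needed.
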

\begin{proof}
  We denote by $\widetilde{\Sigma}$ the set of spherically closed
  spherical roots appearing in Table~\ref{tab:upper-bound-achieved}.

  Let $\gamma \in \|\Gamma^{\vel}_i\|$ and denote by $\Rs'$
  the spherical skeleton
  obtained from $\Rs_i^{\vel}$ by removing every element from
  $\|\Gamma_i^{\vel}\|$ except $\gamma$.
  By Remark~\ref{rem:less-markings}, we have $\is{\Rs^{\vel}_i} \le \is{\Rs'}$.
  As we have $\is{\Rs'} = |R_i^+ \setminus R_{i,S_i^p}^+|$, an inspection
  of Appendix~\ref{sec:tables} yields $\gamma \in \widetilde{\Sigma}$.

  According to
  Table~\ref{tab:upper-bound-achieved}, $\|\gdiv_i^{\vel}\|$ can only
  contain one or two elements (two could be possible for No. 2, No. 5,
  No. 6, and No. 19). However, in the cases with two elements it can be
  readily verified that $\is{\Rs_i^{\vel}} < |R_i^+\setminus
  R^+_{i,S_i^p}|$.
\end{proof}

\begin{prop}
  \label{prop:vellin}
  $\Rs_i^{\vel}$ is linear indecomposable. In particular, $\Rs^{\vel}$
  is linear.
\end{prop}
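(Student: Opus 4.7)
The plan is to reduce to the classification already assembled in Table~\ref{tab:upper-bound-achieved} and Appendix~\ref{sec:tables}. By Lemma~\ref{lem:ee-one}, each $\|\Gamma_i^{\vel}\|$ is a singleton, so each $\Rs_i^{\vel}$ is a reduced elementary spherical skeleton whose underlying spherically closed spherical system $\Ss_i$ comes from the list in Appendix~\ref{sec:luna-diagr-symm} and which has exactly one marking in the sense of $\|\Gamma\|=1$. Hence $\Rs_i^{\vel}$ is precisely one of the skeletons whose invariant $\is{\cdot}$ has been tabulated in Appendix~\ref{sec:tables}.

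Since we have already established $\is{\Rs_i^{\vel}} = |R_i^+ \setminus R^+_{i,S_i^p}|$, the skeleton $\Rs_i^{\vel}$ must appear in Table~\ref{tab:upper-bound-achieved}. The key observation, which we state explicitly, is that every row of Table~\ref{tab:upper-bound-achieved} matches (one of) the marked Luna diagrams in the classification of indecomposable multiplicity-free spaces in \cite[Section~2]{gcscsv}. I would present this matching in a short table, pairing each row of Table~\ref{tab:upper-bound-achieved} with the corresponding item in \cite[Section~2]{gcscsv}; this is routine but unavoidable, and it is the step that does the real work. Consequently, $\Rs_i^{\vel}$ is linear indecomposable.

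It remains to deduce that $\Rs^{\vel} = \Rs^{\vel}_1 \times \dots \times \Rs^{\vel}_k$ is itself linear. Choose multiplicity-free $G_i'$-spaces $V_i$ with $\Rs_{V_i} \cong \Rs_i^{\vel}$. Setting $G' \coloneqq G_1' \times \dots \times G_k'$ and $V \coloneqq V_1 \oplus \dots \oplus V_k$, the direct sum $V$ is again a multiplicity-free $G'$-space, and its spherical skeleton is the product $\Rs_{V_1} \times \dots \times \Rs_{V_k} \cong \Rs^{\vel}$; this uses only that colors, spherical roots, and the maps $\rho$ all behave multiplicatively on products of spherical systems, which is immediate from the definitions.

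The main obstacle is the bookkeeping in the first step: one must verify case by case that each marked Luna diagram recorded in Table~\ref{tab:upper-bound-achieved} really is one of the diagrams occurring in the classification of indecomposable multiplicity-free spaces (rather than, say, a non-linear spherical skeleton happening to satisfy the equality). No conceptual difficulty beyond an inspection of \cite[Section~2]{gcscsv} is involved, but it is this inspection that turns the numerical equality $\is{\Rs_i^{\vel}} = |R_i^+ \setminus R^+_{i,S_i^p}|$ into the structural statement that $\Rs_i^{\vel}$ is linear indecomposable.
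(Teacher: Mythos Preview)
Your proof is correct and follows essentially the same route as the paper: invoke Lemma~\ref{lem:ee-one} to get $|\Gamma_i^{\vel}|=1$, then use the equality $\is{\Rs_i^{\vel}}=|R_i^+\setminus R^+_{i,S_i^p}|$ to force $\Rs_i^{\vel}$ into Table~\ref{tab:upper-bound-achieved}, whose entries were already noted (in the remark preceding the proposition) to come from \cite[Section~2]{gcscsv}. You spell out the matching with \cite{gcscsv} and the product argument for the ``In particular'' more explicitly than the paper, but the substance is identical.
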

\begin{proof}
  By Lemma~\ref{lem:ee-one}, we obtain $|\Gamma_i^{\vel}|=1$, which
  means that $\Rs_{i}^{\vel}$ appears in
  Table~\ref{tab:upper-bound-achieved}.
\end{proof}

\begin{prop}
  \label{prop:elvel}
  We have $\Rs^{\el} = \Rs^{\vel}$.
\end{prop}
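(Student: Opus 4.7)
The plan is to express the discrepancy $f^{\vel}-f^{\el}$ of the two objective functionals defining $\is{\Rs^{\vel}}$ and $\is{\Rs^{\el}}$ as an explicit nonnegative quantity on their common polyhedral feasible region, deduce from $\is{\Rs^{\el}} = \is{\Rs^{\vel}}$ that this discrepancy vanishes at a common maximizer, and then conclude via the product decomposition of $\Rs^{\vel}$ together with an inspection of Table~\ref{tab:upper-bound-achieved}.

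From the proof of Proposition~\ref{prop:elem} we already have $\Qm^*_{\Rs^{\el}} = \Qm^*_{\Rs^{\vel}}$, so both suprema range over the same polyhedron $P \coloneqq \Qm^*_{\Rs^{\el}} \cap \cone(\Sigma)$. Writing $\sv = \sum_{\gamma \in \Sigma} a_\gamma \gamma \in P$ (with $a_\gamma \ge 0$) and using that $m_D = 1$ for every $D \in \gdiv^{\el}\cup\gdiv^{\vel}$ while the $\Dm^a$-contributions to the two objectives coincide, a direct computation yields
\begin{align*}
f^{\vel}(\sv) - f^{\el}(\sv) \;=\; \sum_{\gamma \in \|\gdiv\|} (n_\gamma - 1)\,a_\gamma \;\ge\; 0,
\end{align*}
since $n_\gamma \ge 1$ for $\gamma \in \|\gdiv\|$. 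By hypothesis together with Proposition~\ref{prop:elem} and Theorem~\ref{thm:p-leq}, we have $\is{\Rs^{\el}} = \is{\Rs^{\vel}} = |R^+\setminus R^+_{S^p}|$. A maximizer $\sv^* \in P$ of $f^{\el}$ exists (the supremum is a finite linear-program value attained at a vertex), and the pointwise inequality $f^{\vel} \ge f^{\el}$ combined with the agreement of suprema forces $\sv^*$ to also maximize $f^{\vel}$; at such a point the nonnegative difference vanishes, so $(n_\gamma-1)\,a_\gamma(\sv^*) = 0$ for every $\gamma \in \|\gdiv\|$.

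By Proposition~\ref{prop:vellin} and Lemma~\ref{lem:ee-one}, $\Rs^{\vel}$ factors as a product $\Rs_1^{\vel} \times \cdots \times \Rs_k^{\vel}$ of indecomposable linear spherical skeletons drawn from Table~\ref{tab:upper-bound-achieved}, with $\|\gdiv_i^{\vel}\| = \{\gamma_i^0\}$ a singleton for each $i$. Correspondingly $P$, $f^{\vel}$, and the maximizer $\sv^*$ split along factors, and each component $\sv^*_i$ is a maximizer of $f^{\vel}_i$. The problem thereby reduces to verifying, for every row of Table~\ref{tab:upper-bound-achieved}, that every maximizer of $f^{\vel}_i$ satisfies $a_{\gamma_i^0}(\sv^*_i) > 0$; the vanishing identity above then forces $n_{\gamma_i^0} = 1$, and since $n_\gamma = 0$ for all other $\gamma \in \Sigma$ we obtain $n_\gamma \in \{0,1\}$ for every $\gamma \in \Sigma$, which is exactly $\Rs^{\el} = \Rs^{\vel}$.

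The main obstacle is precisely this final reduction, namely that restricting the LP underlying $f^{\vel}_i$ to the face $\{a_{\gamma_i^0} = 0\}$ strictly decreases its optimal value. This has to be checked for each of the twelve rows of Table~\ref{tab:upper-bound-achieved}. For type $A_n$ one can read it off the LP analysis carried out in detail in the preceding section: substituting $a_{\gamma_i^0} = 0$ into the recursive inequalities of Proposition~\ref{prop:an_lower_bounds} produces an admissible point whose objective value is strictly below $|R_i^+ \setminus R_{i,S_i^p}^+|$ (for instance $n^2$ versus $n^2+n$ in row~2 with $\|\gdiv\| = \{\gamma_1\}$). The remaining entries can be dispatched by analogous LP inspections using the data recorded in Appendix~\ref{sec:tables}, which for each case supplies both the optimal value and an explicit optimizing vertex whose $\gamma_i^0$-coordinate is strictly positive.
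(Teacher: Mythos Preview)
Your argument is correct and follows essentially the same line as the paper's: both reduce to checking, for each indecomposable linear factor from Table~\ref{tab:upper-bound-achieved}, that the explicit optimizing vertex $\sv$ has strictly positive $\gamma_i^0$-coordinate (equivalently $\langle \rho(D), \sv\rangle < 0$), which is immediate from the last column of that table. Your detour through Proposition~\ref{prop:an_lower_bounds} for the $A_n$ case is unnecessary and slightly garbled (that proposition concerns the dual variables $y_i$, not the primal coordinates $a_\gamma$), and the optimizing vertices you need are recorded in Table~\ref{tab:upper-bound-achieved}, not Appendix~\ref{sec:tables}.
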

\begin{proof}
  It suffices to show that if $\Rs$ is indecomposable linear, then
  replacing the unique element $D \in \gdiv$ by two distinct elements
  $D', D''$ with $\rho(D) = \rho(D') = \rho(D'')$ makes $\is{\Rs}$
  strictly smaller. Let us denote this new spherical skeleton by
  $\Rs'$. The polytope $\Qm_{\Rs}^*$ does not change, \ie $\Qm_{\Rs}^*
  = \Qm_{\Rs'}^*$, while $\is{\Rs'} = \is{\Rs} + \langle \rho(D), \sv
  \rangle$ where $\sv \in \Qm_{\Rs}^* \cap \cone(\Sigma)$ is the
  vertex in Table~\ref{tab:upper-bound-achieved}. By
  the explicit description of this vertex,
  it follows that $\langle \rho(D), \sv \rangle < 0$.
\end{proof}

\begin{lemma}
  \label{lemma:split}
  Let $\Rs_1 \coloneqq (\Sigma, S^p, \Dm^a, \Gamma_1)$ and $\Rs_2
  \coloneqq (\Sigma, S^p, \Dm^a, \Gamma_2)$ be spherical skeletons
  with $\Rs_1^{\vel} = \Rs_1^{\el}$ and $\Rs_2^{\vel} =
  \Rs_2^{\el}$. Assume that there exists a surjective map $\phi\colon
  \gdiv_2 \to \gdiv_1$ such that for every $D \in \gdiv_1$ we have
  $\sum_{D'\in\phi^{-1}(D)} \rho_2(D') = \rho_1(D)$.  Then we have
  $\is{\Rs_1} \le \is{\Rs_2}$.
\end{lemma}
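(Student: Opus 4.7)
The plan is to exhibit a single ambient linear program whose objective function agrees on both sides and to show that the feasible region for $\Rs_1$ sits inside that for $\Rs_2$.

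First I would observe that for every $D \in \Gamma_i$ we have $m_D = 1$, so the objective function in the definition of $\is{\Rs_i}$ decomposes as
\begin{align*}
f_i(\sv) = \sum_{D \in \Dm} \big(m_D - 1 + \langle \rho(D), \sv\rangle\big) + \sum_{D \in \Gamma_i} \langle \rho_i(D), \sv \rangle\text{.}
\end{align*}
The first sum depends only on the common spherically closed spherical system $(\Sigma, S^p, \Dm^a)$, hence is identical for $\Rs_1$ and $\Rs_2$. Summing the hypothesis $\sum_{D' \in \phi^{-1}(D)} \rho_2(D') = \rho_1(D)$ over $D \in \Gamma_1$ yields $\sum_{D' \in \Gamma_2} \langle \rho_2(D'), \sv \rangle = \sum_{D \in \Gamma_1} \langle \rho_1(D), \sv \rangle$ for every $\sv$, so $f_1 = f_2$. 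It therefore suffices to prove the inclusion $\Qm^*_{\Rs_1} \cap \cone(\Sigma) \subseteq \Qm^*_{\Rs_2} \cap \cone(\Sigma)$.

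Next I would unpack the assumption $\Rs_i^{\el} = \Rs_i^{\vel}$, which is equivalent to $n_\gamma \le 1$ for every $\gamma \in \Sigma$. Since $n_\gamma = -\sum_{D \in \Gamma_i} \langle \rho_i(D), \gamma \rangle$ is a sum of nonpositive integers bounded below by $-1$, every term lies in $\{0, -1\}$ and at most one term equals $-1$. Setting $N_i(D) \coloneqq \{\gamma \in \Sigma : \langle \rho_i(D), \gamma\rangle = -1\}$ and writing $v = \sum_\gamma x_\gamma \gamma$ with $x_\gamma \ge 0$ for $v \in \cone(\Sigma)$, the constraint $\langle \rho_i(D), v \rangle \ge -1$ becomes exactly $\sum_{\gamma \in N_i(D)} x_\gamma \le 1$.

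The structural step is the disjoint union $N_1(D) = \bigsqcup_{D' \in \phi^{-1}(D)} N_2(D')$ for each $D \in \Gamma_1$. Applying the hypothesis at $\gamma \in \Sigma$ gives $\sum_{D' \in \phi^{-1}(D)} \langle \rho_2(D'), \gamma \rangle = \langle \rho_1(D), \gamma \rangle$; since each summand is in $\{0, -1\}$, exactly one summand equals $-1$ when $\gamma \in N_1(D)$ and all summands vanish otherwise. Combined with $x_\gamma \ge 0$, this yields, for every $D' \in \Gamma_2$ with $D \coloneqq \phi(D')$, the inequality
\begin{align*}
\sum_{\gamma \in N_2(D')} x_\gamma \le \sum_{\gamma \in N_1(D)} x_\gamma \le 1\text{,}
\end{align*}
while the constraints coming from $\Dm$ are identical for $\Rs_1$ and $\Rs_2$. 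Since $\phi$ is surjective every element of $\Gamma_2$ arises this way, so the desired inclusion of feasible regions holds, and thus $\is{\Rs_1} \le \is{\Rs_2}$. The only non-routine step is the translation from $\Rs_i^{\el} = \Rs_i^{\vel}$ to $\{0,-1\}$-valued pairings, after which the partition argument is direct bookkeeping.
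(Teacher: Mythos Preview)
Your argument is correct and follows the same line as the paper, which simply points back to the proof of Proposition~\ref{prop:elem}: the objective functions agree because $\sum_{D'\in\Gamma_2}\rho_2(D')=\sum_{D\in\Gamma_1}\rho_1(D)$, and the inclusion $\Qm^*_{\Rs_1}\cap\cone(\Sigma)\subseteq\Qm^*_{\Rs_2}\cap\cone(\Sigma)$ holds because on $\cone(\Sigma)$ one has $\langle\rho_1(\phi(D')),\sv\rangle\le\langle\rho_2(D'),\sv\rangle$, which is exactly your partition observation $N_2(D')\subseteq N_1(\phi(D'))$. One cosmetic remark: in your final sentence, surjectivity of $\phi$ is not what ensures that every $D'\in\Gamma_2$ is covered (that is automatic since $\Gamma_2$ is the domain); surjectivity is rather what makes the fibres $\phi^{-1}(D)$ exhaust $\Gamma_2$ in your earlier sum over $D\in\Gamma_1$.
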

\begin{proof}
  As in the proof of Proposition~\ref{prop:elem}, we obtain
  $\Qm^*_{\Rs_1} \cap \cone(\Sigma) \subseteq \Qm^*_{\Rs_2} \cap
  \cone(\Sigma)$ and the linear function to be maximized is the same
  for both spherical skeletons.
\end{proof}

\begin{remark}
  \label{rem:seq}
  In the situation of Lemma~\ref{lemma:split}, we write $\Rs_1
  \trianglelefteq \Rs_2$. 
  It is clear that, if $\Rs^{\el}
  \ne \Rs$, then we can always find a sequence
  \begin{align*}
    \Rs \trianglelefteq \Rs_1 \trianglelefteq \dots \trianglelefteq
    \Rs_l \trianglelefteq \Rs^{\el}
  \end{align*}
  such that (after possibly renumbering $\Ss_1, \dots, \Ss_k$) there
  exists an index $i$ with $\Rs_{i} = \Rs'_{i} \times \Rs''$,
  $\Rs_{i+1} = \Rs'_{i+1} \times \Rs''$ such that $\Rs'_i$ and
  $\Rs'_{i+1}$ have underlying spherically closed spherical system
  $\Ss_1 \times \Ss_2$ with $\Rs'_i \trianglelefteq \Rs'_{i+1} =
  (\Rs'_{i+1})^{\el}$ and $\Rs'_i \ne \Rs'_{i+1}$.
\end{remark}

\begin{lemma}
  \label{lemma:lu}
  Assume $\Ss = \Ss_1 \times \Ss_2$ and $|\gdiv| = 1$. Then we have
  $\is{\Rs} < \is{\Rs^{\el}}$.
\end{lemma}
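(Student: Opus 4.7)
The plan is to exploit the product structure $\Ss=\Ss_1\times\Ss_2$ to split the constraints of $\Rs^{\el}$ across the two factors, while observing that the single constraint coming from $\gdiv$ in $\Rs$ still couples the two factors; the strict inequality will come from this coupling being active at the optimum of $\Rs^{\el}$.

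Let $\gdiv=\{D_0\}$ and write $u\coloneqq\rho(D_0)$. The product decomposition gives $\Lambda=\Lambda_1\oplus\Lambda_2$ and dually $\Lambda^*=\Lambda_1^*\oplus\Lambda_2^*$, so we may write $u=u_1+u_2$ with $u_i\in\Lambda_i^*$. Similarly $\Sigma=\Sigma_1\sqcup\Sigma_2$, $\Dm^a=\Dm^a_1\sqcup\Dm^a_2$, and every full color lies entirely in one factor, hence $\bdiv=\bdiv_1\sqcup\bdiv_2$; accordingly any $\sv\in\cone(\Sigma)$ decomposes uniquely as $\sv=\sv_1+\sv_2$ with $\sv_i\in\cone(\Sigma_i)$. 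Since the lemma is vacuous when $u$ is supported in only one factor (then $\Rs=\Rs^{\el}$ is forced or the strict inequality concerns only the single nontrivial factor), we may assume $u_1\neq 0$ and $u_2\neq 0$.

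First, observe that $\Rs^{\el}$ factors as a product of spherical skeletons $\Rs^{\el}=\Rs^{\el}_1\times\Rs^{\el}_2$. Indeed, each $D\in\bdiv_i$ has $\rho(D)\in\Lambda_i^*$, so its half-space constraint involves only $\sv_i$; moreover each $D'\in\gdiv^{\el}$ satisfies $\langle\rho(D'),\gamma'\rangle=-\delta_{\gamma\gamma'}$ on $\Sigma$ for some unique $\gamma$, so if $\gamma\in\Sigma_i$ then $\rho(D')\in\Lambda_i^*$. The objective $f(\sv)=\sum_{D\in\adiv}\bigl(m_D-1+\langle\rho(D),\sv\rangle\bigr)$ also splits as $f(\sv)=f_1(\sv_1)+f_2(\sv_2)$, using the identity $\sum_{D'\in\gdiv^{\el}}\rho(D')=u$ on $\Sigma$ (which makes the objective for $\Rs$ and $\Rs^{\el}$ the same linear function of $\sv$). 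Consequently $\is{\Rs^{\el}}=\is{\Rs^{\el}_1}+\is{\Rs^{\el}_2}$, attained at $\sv^*=\sv^*_1+\sv^*_2$ where $\sv^*_i$ realizes $\is{\Rs^{\el}_i}$.

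In contrast, the polytope $\Qm^*_{\Rs}$ shares the same $\bdiv$-constraints, but replaces the split $\gdiv^{\el}$-constraints by the single coupled constraint $\langle u_1,\sv_1\rangle+\langle u_2,\sv_2\rangle\ge-1$. The key claim is that at any optimizer $\sv^*$ of $\Rs^{\el}$, in each factor $i$ at least one of the $\gdiv^{\el}_i$-constraints is saturated, so that writing $\sv^*_i=\sum_{\gamma\in\Sigma_i}t^*_\gamma\gamma$ we have $\sum_{\gamma\in\Sigma_i}n_\gamma t^*_\gamma\ge 1$ where $n_\gamma=-\langle u,\gamma\rangle$, that is, $\langle u_i,\sv^*_i\rangle\le -1$. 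Summing over $i=1,2$ yields $\langle u,\sv^*\rangle\le -2<-1$, so $\sv^*$ lies strictly outside $\Qm^*_{\Rs}$. Since $f$ is linear and $\sv^*$ attains $\is{\Rs^{\el}}$ strictly on the wrong side of the coupling hyperplane, every $\Rs$-feasible point $\sv$ satisfies $f(\sv)<f(\sv^*)=\is{\Rs^{\el}}$, so $\is{\Rs}<\is{\Rs^{\el}}$.

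The main obstacle is justifying the key claim in the previous paragraph, namely that at the optimum $\sv^*_i$ some $\gdiv^{\el}_i$-constraint is tight, equivalently that the coefficient of $t_{\gamma_i}$ in $f_i$ is strictly positive for some $\gamma_i\in\Sigma_i$ with $n_{\gamma_i}>0$. This amounts to verifying $\sum_{D\in\bdiv_i}\langle\rho(D),\gamma_i\rangle>n_{\gamma_i}$. In the symmetric setting of the present paper this can be confirmed by inspection of the explicit optima computed in Appendix~\ref{sec:tables}, exactly as for Theorem~\ref{thm:p-leq}; in particular the vertex data recorded in Table~\ref{tab:upper-bound-achieved} shows that the relevant linear functional is strictly increasing into each $\gdiv^{\el}$-facet coming from a nonzero $u_i$, as required.
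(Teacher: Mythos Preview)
Your overall strategy matches the paper's: exploit the product decomposition $\Qm^*_{\Rs^{\vel}}=\Qm^*_{\Rs^{\vel}_1}\times\Qm^*_{\Rs^{\vel}_2}$ (recall $\Rs^{\el}=\Rs^{\vel}$ here by Proposition~\ref{prop:elvel}), locate the optimizer $(\sv_1,\sv_2)$ for $\is{\Rs^{\el}}$, and observe that it violates the single coupled constraint $\langle u,\sv\rangle\ge -1$ coming from $\gdiv=\{D_0\}$. The paper does exactly this, appealing to Table~\ref{tab:upper-bound-achieved} both for the \emph{uniqueness} of the optimizing vertex in each factor and for its explicit coordinates; the paper's ``straightforward observation'' that $(\sv_1,\sv_2)\notin\Qm^*_{\Rs}\cap\cone(\Sigma)$ is precisely your computation $\langle u,\sv^*\rangle\le -2$, since at each $\sv_i$ listed there the coefficient of the marked spherical root equals~$1$.

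There is, however, a genuine gap in your justification of the key claim. You assert that tightness of a $\gdiv^{\el}_i$-constraint at the optimum is \emph{equivalent} to positivity of the corresponding objective coefficient $\sum_{D\in\bdiv_i}\langle\rho(D),\gamma_i\rangle-n_{\gamma_i}$. This equivalence is false: a positive coefficient in the direction of $t_{\gamma_i}$ does not force the box constraint $t_{\gamma_i}\le 1$ to bind, because increasing $t_{\gamma_i}$ may be blocked first by a colour constraint, and the resulting trade-off with other coordinates can dominate. What is actually needed (and what the paper uses) is the stronger fact, read off directly from Table~\ref{tab:upper-bound-achieved}, that each $\is{\Rs^{\vel}_i}$ is attained \emph{only} at a single vertex $\sv_i$, and that at this vertex the coordinate along the marked root equals~$1$. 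With uniqueness in hand, your inequality $\langle u_i,\sv_i\rangle\le -1$ goes through, the unique optimizer lies outside $\Qm^*_\Rs\cap\cone(\Sigma)$, and the strict inequality follows since $\Qm^*_\Rs\cap\cone(\Sigma)\subseteq\Qm^*_{\Rs^{\el}}\cap\cone(\Sigma)$ (Proposition~\ref{prop:elem}). In short: drop the flawed ``equivalently'' reformulation and invoke uniqueness of the optimizer directly from Table~\ref{tab:upper-bound-achieved}.
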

\begin{proof}
  The disjoint union $\Sigma = \Sigma_1 \cup \Sigma_2$ yields a direct
  sum decomposition $\Lambda = \Lambda_1 \oplus \Lambda_2$. Moreover,
  it is straightforward to show $\Qm_{\Rs^{\vel}}^* =
  \Qm_{\Rs_1^{\vel}}^* \times \Qm_{\Rs_2^{\vel}}^*$. We have seen that
  $\is{\Rs_i^{\vel}}$ achieves its optimal value (only) at the vertex
  $\sv_i$ of $\Qm_{\Rs_i^{\vel}}^*$ listed in
  Table~\ref{tab:upper-bound-achieved}.  The result follows from the
  straightforward observation $(\sv_1,\sv_2) \not \in \Qm_{\Rs}^* \cap \cone(\Sigma)
  \subseteq \Qm^*_{\Rs^{\vel}} = \Qm_{\Rs_1^{\vel}}^* \times
  \Qm_{\Rs_2^{\vel}}^*$.
\end{proof}

\begin{theorem}
  $\Rs$ is linear.
\end{theorem}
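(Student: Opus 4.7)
The strategy is to prove $\Rs = \Rs^{\el}$; this is enough, since Proposition~\ref{prop:elvel} yields $\Rs^{\el} = \Rs^{\vel}$ and Proposition~\ref{prop:vellin} says each factor $\Rs^{\vel}_i$ is linear indecomposable, so $\Rs^{\vel}$ is itself the spherical skeleton of the product of the corresponding multiplicity-free spaces.

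Assume for contradiction $\Rs \ne \Rs^{\el}$. The equality $\Rs^{\vel} = \Rs^{\el}$ forces $n_\gamma \le 1$ for every $\gamma \in \Sigma$, hence $\langle \rho(D), \gamma\rangle \in \{0, -1\}$ for all $D \in \gdiv$ and all $\gamma \in \Sigma$. Lemma~\ref{lem:ee-one} provides exactly one marked spherical root $\gamma_i^{\#}$ per factor $\Ss_i$ of the product decomposition $\Ss = \Ss_1 \times \dots \times \Ss_k$ from Corollary~\ref{cor:sym-sph-sys-prod}; since $\Rs = [\Rs]$, assigning to each $i \in \{1,\dots,k\}$ the unique $D \in \gdiv$ with $\langle \rho(D), \gamma_i^{\#}\rangle = -1$ defines a surjection $\sigma \colon \{1,\dots,k\} \to \gdiv$, and $\Rs = \Rs^{\el}$ is equivalent to $\sigma$ being a bijection. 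Our assumption therefore yields some $D \in \gdiv$ with $m \coloneqq |\sigma^{-1}(D)| \ge 2$; label $\sigma^{-1}(D) = \{i_1, \dots, i_m\}$.

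Build a chain $\Rs = \Rs_0 \trianglelefteq \Rs_1 \trianglelefteq \dots \trianglelefteq \Rs_l = \Rs^{\el}$ by repeatedly selecting an element $\tilde D$ of the current $\gdiv$ with $|\sigma^{-1}(\tilde D)| \ge 2$ and splitting off exactly one of its marked indices. Every split leaves $n_\gamma$ unchanged, so the condition $\Rs_j^{\vel} = \Rs_j^{\el}$ of Lemma~\ref{lemma:split} is preserved throughout the chain, and we obtain the weak monotonicity $\is{\Rs_j} \le \is{\Rs_{j+1}}$ at each step. Arrange the chain so that $D$ is the first element to be fully split, over the course of $m-1$ consecutive steps. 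At the last of these steps, $D$ carries only the two markings $i_{m-1}, i_m$; set $\Ss_1 \coloneqq \Ss_{i_{m-1}}$, $\Ss_2 \coloneqq \Ss_{i_m}$, and $\Ss_{\mathrm{rest}} \coloneqq \prod_{i \ne i_{m-1}, i_m} \Ss_i$. Every other element of the current $\gdiv_j$ has its $\rho$-pairings supported in $\Sigma_{\mathrm{rest}}$ (either it was originally marking only $\gamma_i^{\#}$ with $i \notin \sigma^{-1}(D)$, or it is one of the singletons $\widetilde{D}_{i_1}, \dots, \widetilde{D}_{i_{m-2}}$ already split off from $D$ and hence marking a $\gamma_{i_l}^{\#}$ with $i_l \ne i_{m-1}, i_m$). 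Consequently $\Rs_j = \Rs'_j \times \Rs''$ with $\gdiv'_j = \{D\}$ on $\Ss_1 \times \Ss_2$, and $\Rs_{j+1} = \Rs'_{j+1} \times \Rs''$; moreover $(\Rs'_j)^{\el} = \Rs'_{j+1}$ because $D$ marks only two roots which we split apart.

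Lemma~\ref{lemma:lu} then gives $\is{\Rs'_j} < \is{(\Rs'_j)^{\el}} = \is{\Rs'_{j+1}}$, and by the product additivity $\is{\Rs'_j \times \Rs''} = \is{\Rs'_j} + \is{\Rs''}$ (as in the proof of Theorem~\ref{thm:p-leq}) this upgrades to $\is{\Rs_j} < \is{\Rs_{j+1}}$. Combined with the weak monotonicity at every other step, we obtain $\is{\Rs} < \is{\Rs^{\el}} = \is{\Rs^{\vel}} = |R^+ \setminus R^+_{S^p}|$, contradicting the hypothesis $\is{\Rs} = |R^+ \setminus R^+_{S^p}|$. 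Hence $\Rs = \Rs^{\el}$ and $\Rs$ is linear. The main obstacle in executing this plan is the bookkeeping in the third paragraph: verifying that at the final split of $D$ one really does obtain a product decomposition $\Rs_j = \Rs'_j \times \Rs''$ with $|\gdiv'_j| = 1$ and $\Rs'_{j+1} = (\Rs'_j)^{\el}$, so that Lemma~\ref{lemma:lu} applies verbatim—and this in turn relies on the tight description of $\gdiv$ furnished by Lemma~\ref{lem:ee-one}.
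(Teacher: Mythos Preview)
Your proposal is correct and follows essentially the same strategy as the paper: assuming $\Rs \ne \Rs^{\el}$, build a chain $\Rs \trianglelefteq \Rs_1 \trianglelefteq \dots \trianglelefteq \Rs^{\el}$, locate one step that factors as a product with a two-factor piece carrying a single $\gdiv$-element, and invoke Lemma~\ref{lemma:lu} to obtain a strict inequality contradicting $\is{\Rs} = |R^+ \setminus R^+_{S^p}|$. The paper compresses the construction of this chain and the product decomposition into the single sentence \enquote{it is clear} in Remark~\ref{rem:seq}; you have unpacked that remark explicitly, correctly using $\Rs^{\el} = \Rs^{\vel}$ to force $n_\gamma \le 1$, setting up the surjection $\sigma$, and verifying that at the final split of $D$ the remaining $\gdiv$-elements are supported away from $\Sigma_{i_{m-1}} \cup \Sigma_{i_m}$ so that the product decomposition and the hypothesis $|\gdiv'_j| = 1$ of Lemma~\ref{lemma:lu} genuinely hold.
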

\begin{proof}
  As $\is{\Rs} = |R^+ \setminus R^+_{S^p}|$, we must have $\is{\Rs_i}
  = \is{\Rs_{i+1}}$ for any sequence as in Remark~\ref{rem:seq}.  It
  then follows from Lemma~\ref{lemma:lu} that we have $\Rs =
  \Rs^{\el}$, which is linear (according to
  Proposition~\ref{prop:vellin} and Proposition~\ref{prop:elvel}).
\end{proof}

\appendix

\section{Luna diagrams of symmetric subgroups}
\label{sec:luna-diagr-symm}

In this appendix, we give the list of symmetric spherically closed
spherical systems for Corollary~\ref{cor:sym-sph-sys-prod}. This list
has been taken from \cite{bp:sph-sys}. For completeness, we have also
explicitly added the Luna diagrams of the group embeddings (from
\cite{br13}).

We will use the following notation: For two natural numbers $p$, $q$
let $S(\GL(p) \times \GL(q)) \subseteq \SL(p+q)$ denote the subgroup
of unimodular block-diagonal matrices with block sizes $p$ and $q$.

\begin{description}
\item[2:] $G \subseteq G\times G$ for every simple adjoint group
  $G$ diagonally embedded in $G\times G$
  \begin{itemize}
  \item $A_n$
    \[
    \begin{picture}(7200,3300)(-300,-300)
      \multiput(0,0)(0,2700){2}{\multiput(0,0)(5400,0){2}{\put(0,0){\usebox{\edge}}
          \multiput(0,0)(1800,0){2}{\circle{600}}}\put(1800,0){\usebox{\susp}}}
      \multiput(0,0)(5400,0){2}{\multiput(0,300)(1800,0){2}{\line(0,1){2100}}}
    \end{picture}
    \]
  \item $B_n$
    \[
    \begin{picture}(9600,3300)(-300,-300)
      \multiput(0,0)(0,2700){2}{\multiput(0,0)(5400,0){2}{\usebox{\edge}}
        \put(7200,0){\usebox{\rightbiedge}}\put(1800,0){\usebox{\susp}}
        \multiput(0,0)(1800,0){2}{\circle{600}}
        \multiput(5400,0)(1800,0){3}{\circle{600}}}
      \multiput(0,300)(1800,0){2}{\line(0,1){2100}}
      \multiput(5400,300)(1800,0){3}{\line(0,1){2100}}
    \end{picture}
    \]
  \item $C_n$
    \[
    \begin{picture}(9600,3300)(-300,-300)
      \multiput(0,0)(0,2700){2}{\multiput(0,0)(5400,0){2}{\usebox{\edge}}
        \put(7200,0){\usebox{\leftbiedge}}\put(1800,0){\usebox{\susp}}
        \multiput(0,0)(1800,0){2}{\circle{600}}
        \multiput(5400,0)(1800,0){3}{\circle{600}}}
      \multiput(0,300)(1800,0){2}{\line(0,1){2100}}\multiput(5400,300)(1800,0){3}{\line(0,1){2100}}
    \end{picture}
    \]
  \item $D_n$
    \[
    \begin{picture}(9900,5700)(-300,-3900)
      \multiput(0,0)(900,-2700){2}{\multiput(0,0)(5400,0){2}{\usebox{\edge}}
        \put(1800,0){\usebox{\susp}}\put(7200,0){\usebox{\bifurc}}
        \multiput(0,0)(5400,0){2}{\multiput(0,0)(1800,0){2}{\circle{600}}}
        \multiput(8400,-1200)(0,2400){2}{\circle{600}}}
      \multiput(100,-300)(5400,0){2}{\multiput(0,0)(1800,0){2}{\line(1,-3){700}}}
      \multiput(8500,-1500)(0,2400){2}{\line(1,-3){700}}
    \end{picture}
    \]
  \item $E_6$, $E_7$, or $E_8$
    \[
    \begin{picture}(12450,5550)(-1350,-2100)
      \multiput(0,0)(-1050,3150){2}{\multiput(0,0)(1800,0){3}{\usebox{\edge}}
        \put(3600,0){\usebox{\vedge}}\put(5400,0){\usebox{\susp}}
        \put(9000,0){\usebox{\edge}}\multiput(0,0)(1800,0){4}{\circle{600}}
        \multiput(9000,0)(1800,0){2}{\circle{600}}
        \put(3600,-1800){\circle{600}}}
      \put(-100,300){\multiput(0,0)(1800,0){4}{\line(-1,3){850}}
        \multiput(9000,0)(1800,0){2}{\line(-1,3){850}}
        \put(3600,-1800){\line(-1,3){850}}}
    \end{picture}
    \]
  \item $F_4$
    \[
    \begin{picture}(6000,3300)(-300,-300)
      \multiput(0,0)(0,2700){2}{\put(0,0){\usebox{\dynkinf}}
        \multiput(0,0)(1800,0){4}{\circle{600}}}
      \multiput(0,300)(1800,0){4}{\line(0,1){2100}}
    \end{picture}
    \]
  \item $G_2$
    \[
    \begin{picture}(6900,2100)(-300,-1200)
      \multiput(0,0)(4500,0){2}{\put(0,0){\usebox{\dynkingtwo}}
        \multiput(0,0)(1800,0){2}{\circle{600}}}
      \multiput(0,-1200)(4500,0){2}{\line(0,1){900}}
      \put(0,-1200){\line(1,0){4500}}
      \multiput(1800,-900)(4500,0){2}{\line(0,1){600}}
      \put(1800,-900){\line(1,0){2600}}\put(4600,-900){\line(1,0){1700}}
    \end{picture}
    \]
  \end{itemize}
\item[3:] $\mathrm S(\GL(m{+}1)\times\GL(m{+}l)) \subseteq
  \SL(2m{+}l{+}1)$,  $|\Sigma|=m{+}1$
  \begin{itemize}
  \item $l=1$, $m=0$
    \[
    \begin{picture}(600,1800)
      \put(300,900){\usebox{\aone}}
    \end{picture}
    \]
  \item $l \ge 2$, $m=0$
    \[
    \begin{picture}(6000,1800)
      \put(300,900){\usebox{\mediumam}}
    \end{picture}
    \]
  \item $l=1$, $m \ge 1$
    \[
    \begin{picture}(11400,2400)(-300,-1500)
      \multiput(0,0)(7200,0){2}{\put(0,0){\usebox{\susp}}
        \multiput(0,0)(3600,0){2}{\circle{600}}}
      \multiput(0,-300)(10800,0){2}{\line(0,-1){1200}}
      \put(0,-1500){\line(1,0){10800}}
      \multiput(3600,-300)(3600,0){2}{\line(0,-1){900}}
      \put(3600,-1200){\line(1,0){3600}}
      \multiput(3600,0)(1800,0){2}{\usebox{\edge}}
      \put(5400,0){\usebox{\aone}}
      \put(5400,600){\usebox{\tow}}
    \end{picture}
    \]
  \item $m \ge 1,$ $l \geq 2$
    \[
    \diagramaapplusqplusp
    \]
  \end{itemize}

\item[4:] $N(\mathrm S(\GL(m{+}1) \times \GL(m{+}1)))
  \subseteq \SL(2m{+}2)$
  \begin{itemize}
  \item $m=0$
    \[
    \begin{picture}(600,1800)
      \put(300,900){\usebox{\aprime}}
    \end{picture}
    \]
  \item $m \geq 1$
    \[
    \diagramaaprimepplusoneplusp
    \]
  \end{itemize}
 
\item[5:] $\SO(m{+}1) \cdot Z_{\SL(m{+}1)}
  \subseteq \SL(m{+}1)$, $m \geq 2$
  \[
  \diagramaon
  \]
 
\item[6:] $\Sp(2m{+}2) \cdot Z_{\SL(2m{+}2)}
  \subseteq \SL(2m{+}2)$, $m \ge 1$
  \[
  \begin{picture}(10800,1800)
    \put(0,600){\diagramacn}
  \end{picture}
  \]

\item[8:] $\SO(2) \times \SO(2l{+}1) \subseteq \SO(2l{+}3)$
  \begin{itemize}
  \item $l=1$
    \[
    \begin{picture}(2400,1800)(-300,-900)
      \put(0,0){\usebox{\rightbiedge}}
      \put(0,0){\usebox{\aone}}
      \put(1800,0){\usebox{\aprime}}
      \put(0,600){\usebox{\toe}}
    \end{picture}
    \]
  \item $l \ge 2$
    \[
    \begin{picture}(9300,1800)(-300,-900)
      \put(0,0){\usebox{\aone}}
      \put(0,0){\usebox{\edge}}
      \put(1800,0){\usebox{\shortbm}}
      \put(0,600){\usebox{\toe}}
    \end{picture}
    \]
  \end{itemize}

\item[9:] $S(\mathrm{O}(m{+}1) \times \mathrm{O}(m{+}2l)) \subseteq
  \SO(2m{+}2l{+}1)$, $|\Sigma| = m{+}1$
  \begin{itemize}
  \item $l \ge 2$, $m=0$
  \[
  \begin{picture}(7500,1800)
    \put(300,900){\usebox{\shortbm}}
  \end{picture}
  \]
  \item $l=m=1$
    \[
    \begin{picture}(2400,1800)(-300,-900)
      \put(0,0){\usebox{\rightbiedge}}
      \multiput(0,0)(1800,0){2}{\usebox{\aprime}}
    \end{picture}
    \]
  \item $l \ge 2$, $m=1$
    \[
    \begin{picture}(9300,1800)(-300,-900)
      \put(0,0){\usebox{\aprime}}
      \put(0,0){\usebox{\edge}}
      \put(1800,0){\usebox{\shortbm}}
    \end{picture}
    \]
 \item $l=1$, $m \ge 2$
    \[
    \begin{picture}(9600,1800)(-300,-900)
      \multiput(0,0)(5400,0){2}{\usebox{\edge}}
      \put(1800,0){\usebox{\susp}}
      \put(7200,0){\usebox{\rightbiedge}}
      \multiput(0,0)(1800,0){2}{\usebox{\aprime}}
      \multiput(5400,0)(1800,0){3}{\usebox{\aprime}}
    \end{picture}
    \]
   \item $l \ge 2$, $m \ge 2$
    \[
    \begin{picture}(14700,1800)(-300,-900)
      \multiput(0,0)(5400,0){2}{\usebox{\edge}}
      \put(1800,0){\usebox{\susp}}
      \multiput(0,0)(1800,0){2}{\usebox{\aprime}}
      \put(5400,0){\usebox{\aprime}}
      \put(7200,0){\usebox{\shortbm}}
    \end{picture}
    \]
  \end{itemize}

\item[10, 11:] $N(\Sp(2m{+}2) \times \Sp(2m{+}2l)) \subseteq \Sp(4m{+}2l{+}2)$, $|\Sigma| = m{+}1$
  \begin{itemize}
  \item $l \ge 2$, $m=0$
    \[
    \begin{picture}(9000,1800)
      \put(0,900){\usebox{\shortcm}}
    \end{picture}
    \]
  \item $l=1$, $m \ge 1$
    \[
    \begin{picture}(12900,1800)(0,-900)
      \put(0,0){\usebox{\dthree}}
      \put(3600,0){\usebox{\susp}}
      \put(7200,0){\usebox{\dthree}}
      \put(10800,0){\usebox{\leftbiedge}}
      \put(12600,0){\usebox{\gcircle}}
    \end{picture}
    \]
  \item $l \ge 2$, $m \ge 1$
    \[
    \begin{picture}(19800,1800)(0,-900)
      \put(0,0){\usebox{\dthree}}
      \put(3600,0){\usebox{\susp}}
      \put(7200,0){\usebox{\dthree}}
      \put(10800,0){\usebox{\shortcm}}
    \end{picture}
    \]
  \end{itemize}

\item[12:] $\GL(m{+}1) \subseteq \Sp(2m{+}2)$, $m \ge 2$
  \[
  \begin{picture}(9600,1800)(-300,-900)
    \multiput(0,0)(5400,0){2}{\usebox{\edge}}
    \put(1800,0){\usebox{\susp}}
    \multiput(0,0)(1800,0){2}{\usebox{\aprime}}
    \multiput(5400,0)(1800,0){2}{\usebox{\aprime}}
    \put(7200,0){\usebox{\leftbiedge}}
    \put(9000,0){\usebox{\aone}}
    \put(9000,600){\usebox{\tow}}
  \end{picture}
  \]

\item[13:] $N(\GL(m{+}1)) \subseteq \Sp(2m{+}2)$, $m \ge 2$
  \[
  \diagramcoprimen
  \]

\item[14:] $\SO(2) \times \SO(2l{+}2) \subseteq
  \SO(2l{+}4)$, $l \ge 2$
  \[
  \begin{picture}(8700,2400)(-300,-1200)
    \put(0,0){\usebox{\aone}}
    \put(0,0){\usebox{\edge}}
    \put(1800,0){\usebox{\shortdm}}
    \put(0,600){\usebox{\toe}}
  \end{picture}
  \]

\item[15:] $S(\mathrm{O}(m{+}1) \times \mathrm{O}(m{+}2l{+}1) ) \subseteq
  \SO(2m{+}2l{+}2)$, $|\Sigma| = m{+}1$
  \begin{itemize}
  \item $l \ge 3$, $m=0$
  \[
  \begin{picture}(6900,2400)
    \put(300,1200){\usebox{\shortdm}}
  \end{picture}
  \]
  \item $l \ge 2$, $m=1$
    \[
    \begin{picture}(8700,2400)(-300,-1200)
      \put(0,0){\usebox{\aprime}}
      \put(0,0){\usebox{\edge}}
      \put(1800,0){\usebox{\shortdm}}
    \end{picture}
    \]
  \item $l=1$, $m \ge 2$
    \[
    \begin{picture}(9450,3000)(-300,-1500)
      \multiput(0,0)(5400,0){2}{\usebox{\edge}}
      \put(1800,0){\usebox{\susp}}
      \put(7200,0){\usebox{\bifurc}}
      \multiput(0,0)(1800,0){2}{\usebox{\aprime}}
      \multiput(5400,0)(1800,0){2}{\usebox{\aprime}}
      \multiput(8400,-1200)(0,2400){2}{\usebox{\wcircle}}
      \multiput(8700,-1200)(0,2400){2}{\line(1,0){450}}
      \put(9150,-1200){\line(0,1){2400}}
    \end{picture}
    \]
  \item $l=0$, $m \ge 3$
    \[
    \begin{picture}(9000,3600)(-300,-2100)
      \multiput(0,0)(5400,0){2}{\usebox{\edge}}
      \put(1800,0){\usebox{\susp}}
      \put(7200,0){\usebox{\bifurc}}
      \multiput(0,0)(1800,0){2}{\usebox{\aprime}}
      \multiput(5400,0)(1800,0){2}{\usebox{\aprime}}
      \multiput(8400,-1200)(0,2400){2}{\usebox{\aprime}}
    \end{picture}
    \]
    \item  $l, m \ge 2$
    \[
    \diagramdopplusq
    \]
  \end{itemize}

\item[16/1:] $\GL(2m{+}3) \subseteq \SO(4m{+}6)$, $m \ge 1$
  \[
  \diagramdcnodd
  \]

\item[16/2:] $\GL(2m{+}2) \subseteq \SO(4m{+}4)$, $m \ge 1$
  \[
  \begin{picture}(14100,3300)(0,-2100)
    \multiput(0,0)(7200,0){2}{\usebox{\dthree}}
    \put(3600,0){\usebox{\susp}}
    \put(10800,0){\usebox{\edge}}
    \put(12600,0){\usebox{\bifurc}}
    \put(12600,0){\usebox{\gcircle}}
    \put(13800,-1200){\usebox{\aone}}
    \put(13800,-600){\usebox{\tonw}}
  \end{picture}
  \]

\item[17:] $N(\GL(2m{+}2)) \subseteq \SO(4m{+}4)$, $m \ge 1$
  \[
  \diagramdcprimeneven
  \]

\item[18:] $\mathrm D_5 \subseteq \mathrm E_6$
  \[
  \begin{picture}(7800,3000)(-300,-2100)
    \put(0,0){\usebox{\afive}}
    \put(3600,0){\usebox{\vedge}}
    \put(3600,-1800){\usebox{\gcircle}}
  \end{picture}
  \]

\item[19:] $\mathrm F_4 \subseteq \mathrm E_6$
  \[
  \begin{picture}(7800,2700)(-300,-1800)
    \multiput(0,0)(1800,0){4}{\usebox{\edge}}
    \put(3600,0){\usebox{\vedge}}
    \multiput(0,0)(7200,0){2}{\usebox{\gcircle}}
  \end{picture}
  \]

\item[20:] $\mathrm A_5 \times \mathrm A_1 \subseteq \mathrm E_6$
  \[
  \begin{picture}(7800,3600)(-300,-2700)
    \put(0,0){\usebox{\dynkinafive}}
    \thicklines\put(3600,0){\line(-1,-2){900}}
    \thinlines\put(2700,-1800){\circle*{150}}
    \multiput(0,0)(5400,0){2}{\multiput(0,0)(1800,0){2}{\circle{600}}}
    \multiput(0,300)(7200,0){2}{\line(0,1){600}}\put(0,900){\line(1,0){7200}}
    \multiput(1800,300)(3600,0){2}{\line(0,1){300}}
    \put(1800,600){\line(1,0){3600}}
    \multiput(3600,0)(-900,-1800){2}{\usebox{\aprime}}
  \end{picture}
  \]

\item[21:] $\mathrm C_4 \subseteq \mathrm E_6$
  \[
  \begin{picture}(7800,3600)(-300,-2700)
    \put(0,0){\usebox{\dynkinafive}}
    \thicklines
    \put(3600,0){\line(-1,-2){900}}
    \thinlines
    \put(2700,-1800){\circle*{150}}
    \multiput(0,0)(1800,0){5}{\usebox{\aprime}}
    \put(2700,-1800){\usebox{\aprime}}
  \end{picture}
  \]

\item[22:] $\mathrm E_6 \subseteq \mathrm E_7$
  \[
  \begin{picture}(9600,2700)(-300,-1800)
    \multiput(0,0)(1800,0){5}{\usebox{\edge}}
    \put(3600,0){\usebox{\vedge}}
    \multiput(0,0)(7200,0){2}{\usebox{\gcircle}}
    \put(9000,0){\usebox{\aone}}
    \put(9000,600){\usebox{\tow}}
  \end{picture}
  \]

\item[23:] $\mathrm E_6 \subseteq \mathrm E_7$
  \[
  \begin{picture}(9600,2700)(-300,-1800)
    \multiput(0,0)(1800,0){5}{\usebox{\edge}}
    \put(3600,0){\usebox{\vedge}}
    \multiput(0,0)(7200,0){2}{\usebox{\gcircle}}
    \put(9000,0){\usebox{\aprime}}
  \end{picture}
  \]

\item[24:] $\mathrm D_6 \times \mathrm A_1 \subseteq \mathrm
  E_7$
  \[
  \begin{picture}(9600,2700)(-300,-1800)
    \multiput(0,0)(1800,0){5}{\usebox{\edge}}
    \put(3600,0){\usebox{\vedge}}
    \multiput(3600,0)(3600,0){2}{\usebox{\gcircle}}
    \multiput(0,0)(1800,0){2}{\usebox{\aprime}}
  \end{picture}
  \]

\item[25:] $\mathrm A_7 \subseteq \mathrm E_7$
  \[
  \begin{picture}(9600,3600)(-300,-2700)
    \put(0,0){\usebox{\dynkinasix}}
    \thicklines
    \put(3600,0){\line(-1,-2){900}}
    \thinlines
    \put(2700,-1800){\circle*{150}}
    \multiput(0,0)(1800,0){6}{\usebox{\aprime}}
    \put(2700,-1800){\usebox{\aprime}}
  \end{picture}
  \]

\item[26:] $\mathrm E_7 \times \mathrm A_1 \subseteq \mathrm E_8$
  \[
  \begin{picture}(11400,2700)(-300,-1800)
    \multiput(0,0)(1800,0){6}{\usebox{\edge}}
    \put(3600,0){\usebox{\vedge}}
    \multiput(0,0)(7200,0){2}{\usebox{\gcircle}}
    \multiput(9000,0)(1800,0){2}{\usebox{\aprime}}
  \end{picture}
  \]

\item[27:] $\mathrm D_8 \subseteq \mathrm E_8$
  \[
  \begin{picture}(11400,3600)(-300,-2700)
    \put(0,0){\usebox{\dynkinaseven}}
    \thicklines
    \put(3600,0){\line(-1,-2){900}}
    \thinlines
    \put(2700,-1800){\circle*{150}}
    \multiput(0,0)(1800,0){7}{\usebox{\aprime}}
    \put(2700,-1800){\usebox{\aprime}}
  \end{picture}
  \]

\item[28:] $\mathrm B_4 \subseteq \mathrm F_4$
  \[
  \begin{picture}(5700,1800)(0,-900)
    \put(0,0){\usebox{\ffour}}
  \end{picture}
  \]
  
\item[29:] $\mathrm C_3 \times \mathrm A_1 \subseteq \mathrm F_4$
  \[
  \diagramfofour
  \]
  
\item[30:] $\mathrm A_1 \times \mathrm A_1 \subseteq \mathrm G_2$
  \[
  \begin{picture}(2400,1800)(-300,-900)
    \put(0,0){\usebox{\lefttriedge}}
    \multiput(0,0)(1800,0){2}{\usebox{\aprime}}
  \end{picture}
  \]
\end{description}

\clearpage

\section{Tables}
\label{sec:tables}
In this appendix, we have computed $\is{\Rs}$ for all reduced
elementary $\Rs$ with $\Ss$ from the list in
Appendix~\ref{sec:luna-diagr-symm} and $|\gdiv| = 1$.

\begin{table}[!ht]
  \begin{tabular}{@{}llll@{}}
    \toprule
    $R$ & $|R^+{\setminus}R^+_{S^p}|$ & $\|\Gamma\|$ & $\is{\tc{}}$ \\
    \midrule
    $A_n$, $n{\ge} 1$ & $n^2{+}n$ & $\gamma_k$, $\gamma_{n-k+1}$, $1 {\le} k {\le} \lceil \frac{n}{2} \rceil$ & $n^2 {-} 2kn {+}3n {+} 2k^2 {-} 6k {+} 4$\\
    $B_n$, $n{\ge} 2$ & $2n^2$ & $\gamma_1$ & $3n {-} 1$ \\
    && $\gamma_k$, $1 {<} k {<} n$ & $3n{+}k^2{-}2k{-}4 $\\
    && $\gamma_n$ & $n^2 {-} n$ \\
    $C_n$, $n {\ge} 2$ & $2n^2$ & $\gamma_k$, $1 {\le} k {<} n$ & $n {+} k^2 {-} 1$\\
    && $\gamma_n $ & $n^2 {+} 1$ \\
    $D_n$, $n {\ge} 4$ & $2n^2{-}2n$ & $\gamma_1$ & $3n{-}3$ \\
    && $\gamma_k$, $1 {<} k {\le} n-2$ & $3n{+}k^2{-}2k{-}6$ \\
    && $\gamma_{n-1}$, $\gamma_n$ & $n^2 {-} 2n {+} 1$ \\
    \bottomrule
  \end{tabular}
  \caption{classical group embeddings}
  \label{tab:cdiag}
\end{table}

\begin{table}[!ht]
  \begin{tabular}{@{}llll@{}}
    \toprule
    $R$ & $|R^+{\setminus}R^+_{S^p}|$ & $\|\Gamma\|$ & $\is{\tc{}}$\\
    \midrule
    $E_6$ & $72$ & $\gamma_1$, $\gamma_6$ & $\frac{37}{2}$\\
    && $\gamma_2$, $\gamma_3$, $\gamma_5$ & $16$\\
    && $\gamma_4$ & $14$\\
    $E_7$ & $126$ & $\gamma_1$ & $27$\\
    && $\gamma_2$, $\gamma_3$ & $25$\\
    && $\gamma_4$ & $22$\\
    && $\gamma_5$, $\gamma_6$ & $19$\\
    && $\gamma_7$ & $20$\\
    $E_8$ & $240$ & $\gamma_1$ & $38$ \\
    && $\gamma_2$, $\gamma_3$ & $36$ \\
    && $\gamma_4$ & $32$ \\
    && $\gamma_5$ & $27$ \\
    && $\gamma_6$ & $24$ \\
    && $\gamma_7$ & $22$ \\
    && $\gamma_8$ & $21$ \\
    $F_4$ & $48$ & $\gamma_1$ & $12$\\
    && $\gamma_2$ & $10$\\
    && $\gamma_3$ & $8$\\
    && $\gamma_4$ & $7$\\
    $G_2$ & $12$ & $\gamma_1$ & $2$ \\
    && $\gamma_2$ & $4$ \\
    \bottomrule
  \end{tabular}
  \caption{exceptional group embeddings}
  \label{tab:ediag}
\end{table}

\begin{table}[!ht]
  \renewcommand{\arraystretch}{1.1}
  \begin{tabular}{@{}llll@{}}
    \toprule
    No. & $|R^+{\setminus}R^+_{S^p}|$ & $\|\Gamma\|$ & $\is{\tc{}}$\\
    \midrule
    3 & $2m^2{+}2lm$ & $\gamma_1$ & $3m{+}2l{-}1$ \\
    & ${+}m{+}2l{-}1$ & $\gamma_k$, $1{<}k{\le}m$ & $3m{+}2l{+}k^2{-}2k{-}4$ \\
    && $\gamma_{m+1}$, $1{\le}m$ & $m^2{+}lm{+}l{-}2$ \\
    4 & $2m^2{+}3m{+}1$ & $\gamma_k$, $1{\le}k{\le}m$ & $m{+}k^2{-}1$ \\
    && $ \gamma_{m+1}$ & $m^2{+}m{+}1$\\
    \midrule
    5 & $\frac{1}{2}m^2{+}\frac{1}{2}m$ & $\gamma_k$, $\gamma_{m-k+1}$, $1{\le}k{\le}\lceil\frac{m}{2} \rceil$ & $\frac{1}{2}m^2{-}km{+}\frac{3}{2}m{+}k^2{-}4k{+}3$ \\
    \midrule
    6 & $2m^2{+}2m$ & $\gamma_k$, $\gamma_{m-k+1}$, $1{\le}k{\le}\lceil \frac{m}{2} \rceil$ & $2m^2{-}4km{+}6m{+}4k^2{-}10k{+}6$ \\
    \midrule
    8 & $4l$ & $\gamma_1$ & $2l{-}2$ \\
    && $\gamma_2$ & $4l{-}2$ \\
    9 & $m^2{+}2lm$ & $\gamma_1$ & $m{+}2l{-}1$ \\
    & ${+}2l{-}1$ & $\gamma_k$, $1{<}k{<}m$ & $m{+}2l{+}\frac{1}{2}k^2{-}\frac{1}{2}k{-}4$ \\
    && $\gamma_{m}$, $l{=}1$ & $\frac{1}{2}m^2{-}1$ \\
    && $\gamma_m$, $l{\ge}2$ & $\frac{1}{2}m^2{+}\frac{1}{2}m{+}2l{-}4$ \\
    && $\gamma_{m+1}$, $m {\ge} 1$ & $\frac{1}{2}m^2{+}lm{-}m{+}l{-}\tfrac{3}{2}$ \\
    \midrule
    10, 11 & $4m^2{+}4lm$ & $\gamma_k$, $1{\le}k{\le}m$ & $3m{+}2l{+}2k^2{-}k{-}1$ \\
    & ${+}3m{+}4l{-}1$ & $\gamma_{m+1}$, $l{=}1$ & $2m^2{+}4m{+}3$\\
    && $\gamma_{m+1}$, $l{\ge}2$ & $2m^2{+}2lm{+}2m{+}2l$\\
    \midrule
    12 & $m^2{+}2m{+}1$ & $\gamma_1$ & $m{+}1$ \\
    && $\gamma_k$, $1{<}k{\le}m$ & $m{+}\frac{1}{2}k^2{-}\frac{1}{2}k{-}2$ \\
    && $\gamma_{m+1}$ & $\frac{1}{2}m^2{+}\frac{1}{2}m{-}1$ \\
    13 & $m^2{+}2m{+}1$ & $\gamma_k$, $1 {\le} k {\le} m$ & $\frac{1}{2}k^2{+}\frac{1}{2}k{-}1$  \\
    && $\gamma_{m+1}$ & $\frac{1}{2}m^2{+}\frac{1}{2}m{+}1$ \\
    \midrule
    14 & $4l{+}2$ & $\gamma_1$ & $2l{-}1$ \\
    && $\gamma_2$ & $4l$ \\
    15 & $m^2{+}2lm$ & $\gamma_1$ & $m{+}2l$ \\
    & ${+}m{+}2l$ & $\gamma_k$, $1{<}k{<}m$ & $m{+}2l{+}\frac{1}{2}k^2{-}\frac{1}{2}k{-}3$ \\
    && $\gamma_m$, $l{\ge}1$ & $\frac{1}{2}m^2{+}\frac{1}{2}m{+}2l{-}3$ \\
    && $\gamma_{m+1}$, $l{\ge}1$, $m {\ge} 1$ & $\frac{1}{2}m^2{+}lm{-}\frac{1}{2}m{+}l{-}1$ \\
    && $\gamma_m$, $\gamma_{m+1}$, $l{=}0$, $m {\ge} 1$ & $\tfrac{1}{2}m^2{-}\tfrac{1}{2}m$ \\
    \midrule
    16/1 & $4m^2{+}8m{+}5$ & $\gamma_1$ & $7m{+}5$ \\
    && $\gamma_k$, $1{<}k{\le}m$ & $7m{+}2k^2{-}5k{+}2$ \\
    && $\gamma_{m+1}$ & $2m^2{+}4m{+}1$ \\
    16/2 & $4m^2{+}4m{+}1$ & $\gamma_1$ & $7m{+}1$ \\
    && $\gamma_k$, $1{<}k{\le}m$ & $7m{+}2k^2{-}5k{-}2$ \\
    && $\gamma_{m+1}$ & $2m^2{+}2m{-}1$ \\
    17 & $4m^2{+}4m{+}1$ & $\gamma_k$, $1{\le}k{\le}m$ & $3m{+}2k^2{-}k{-}1$ \\
    && $\gamma_{m+1}$ & $2m^2{+}2m{+}1$ \\
    \bottomrule
  \end{tabular}
  \caption{symmetric subgroups of classical simple groups}
  \label{tab:csg}
\end{table}

\begin{table}[!ht]
  \begin{tabular}{@{}llll@{}}
    \toprule
    No. & $|R^+{\setminus}R^+_{S^p}|$ & $\|\Gamma\|$ & $\is{\tc{}}$ \\
    \midrule
    18 & $30$ & $\alpha_1{+}\alpha_3{+}\alpha_4{+}\alpha_5{+}\alpha_6$ & $13$ \\
    && $2\alpha_2{+}2\alpha_4{+}\alpha_3{+}\alpha_5$ & $20$ \\
    \midrule
    19 & $24$ & $2\alpha_1{+}2\alpha_3{+}2\alpha_4{+}\alpha_2{+}\alpha_5$,
    $2\alpha_6{+}2\alpha_5{+}2\alpha_4{+}\alpha_2{+}\alpha_3$ & $24$ \\
    \midrule
    20 & $36$ & $\alpha_1{+}\alpha_6$ & $4$ \\
    && $\alpha_3{+}\alpha_5$ & $5$ \\
    && $2\alpha_4$ & $6$ \\
    && $2\alpha_2$ & $7$ \\
    \midrule
    21 & $36$ & $2\alpha_1$, $2\alpha_6$ & $\frac{13}{2}$ \\
    && $2\alpha_2$, $2\alpha_3$, $2\alpha_5$ & $5$ \\
    && $2\alpha_4$ & $4$ \\
    \midrule
    22 & $51$ & $2\alpha_1{+}2\alpha_3{+}2\alpha_4{+}\alpha_2{+}\alpha_5$ & $31$ \\
    && $2\alpha_6{+}2\alpha_5{+}2\alpha_4{+}\alpha_2{+}\alpha_3$ & $22$ \\
    && $\alpha_7$ & $23$ \\
    23 & $51$ & $2\alpha_1{+}2\alpha_3{+}2\alpha_4{+}\alpha_2{+}\alpha_5$ & $14$ \\
    && $2\alpha_6{+}2\alpha_5{+}2\alpha_4{+}\alpha_2{+}\alpha_3$ & $23$ \\
    && $2\alpha_7$ & $25$ \\
    \midrule
    24 & $60$ & $2\alpha_1$ & $13$ \\
    && $2\alpha_3$ & $12$ \\
    && $\alpha_2{+}2\alpha_4{+}\alpha_5$ & $11$ \\
    && $\alpha_5{+}2\alpha_6{+}\alpha_7$ & $9$ \\
    \midrule
    25 & $63$ & $2\alpha_1$ & $10$ \\
    && $2\alpha_2$, $2\alpha_3$ & $9$ \\
    && $2\alpha_4$ & $8$ \\
    && $2\alpha_5$, $2\alpha_6$ & $6$ \\
    && $2\alpha_7$ & $\frac{13}{2}$ \\
    \midrule
    26 & $104$ & $2\alpha_1{+}2\alpha_3{+}2\alpha_4{+}\alpha_2{+}\alpha_5$ & $19$ \\
    && $2\alpha_6{+}2\alpha_5{+}2\alpha_4{+}\alpha_2{+}\alpha_3$ & $23$ \\
    && $2\alpha_7$ & $24$ \\
    && $2\alpha_8$ & $25$ \\
    \midrule
    27 & $120$ & $2\alpha_1$ & $15$ \\
    && $2\alpha_2$, $2\alpha_3$ & $14$ \\
    && $2\alpha_4$ & $13$ \\
    && $2\alpha_5$ & $10$ \\
    && $2\alpha_6$ & $8$ \\
    && $2\alpha_7$ & $7$ \\
    && $2\alpha_8$ & $\frac{13}{2}$ \\
    \midrule
    28 & $15$ & $\alpha_1{+}2\alpha_2{+}3\alpha_3{+}2\alpha_4$ & $10$ \\
    \midrule
    29 & $24$ & $2\alpha_1$ & $4$ \\
    && $2\alpha_2$ & $3$ \\
    && $2\alpha_3$ & $2$ \\
    && $2\alpha_4$ & $\frac{3}{2}$ \\
    \midrule
    30 & $6$ & $2\alpha_1$ & $0$ \\
    && $2\alpha_2$ & $1$ \\
    \bottomrule
  \end{tabular}
  \caption{symmetric subgroups of exceptional simple groups}
  \label{tab:exc}
\end{table}

\clearpage

\section*{Acknowledgments}
The authors would like to thank Victor Batyrev, J\"urgen Hausen, and Simon
Keicher for several fruitful discussions.

\bibliographystyle{amsalpha}
\bibliography{gmsv}

\end{document}